\documentclass[numbers=enddot,12pt,final,onecolumn,notitlepage]{scrartcl}%
\usepackage[headsepline,footsepline,manualmark]{scrlayer-scrpage}
\usepackage[all,cmtip]{xy}
\usepackage{amssymb}
\usepackage{amsmath}
\usepackage{amsthm}
\usepackage{framed}
\usepackage{comment}
\usepackage{color}
\usepackage[breaklinks=True]{hyperref}
\usepackage[sc]{mathpazo}
\usepackage[T1]{fontenc}
\usepackage{needspace}
\usepackage{tabls}
\usepackage{ytableau}
\providecommand{\U}[1]{\protect\rule{.1in}{.1in}}
\newcounter{exer}

\numberwithin{exer}{section}
\theoremstyle{definition}
\newtheorem{theo}{Theorem}[section]
\newenvironment{theorem}[1][]
{\begin{theo}[#1]\begin{leftbar}}
{\end{leftbar}\end{theo}}
\newtheorem{lem}[theo]{Lemma}
\newenvironment{lemma}[1][]
{\begin{lem}[#1]\begin{leftbar}}
{\end{leftbar}\end{lem}}
\newtheorem{prop}[theo]{Proposition}
\newenvironment{proposition}[1][]
{\begin{prop}[#1]\begin{leftbar}}
{\end{leftbar}\end{prop}}
\newtheorem{defi}[theo]{Definition}

\newtheorem{remk}[theo]{Remark}
\newenvironment{remark}[1][]
{\begin{remk}[#1]\begin{leftbar}}
{\end{leftbar}\end{remk}}
\newtheorem{coro}[theo]{Corollary}

\newtheorem{conv}[theo]{Convention}
\newenvironment{convention}[1][]
{\begin{conv}[#1]\begin{leftbar}}
{\end{leftbar}\end{conv}}
\newtheorem{quest}[theo]{Question}
\newenvironment{question}[1][]
{\begin{quest}[#1]\begin{leftbar}}
{\end{leftbar}\end{quest}}
\newtheorem{warn}[theo]{Warning}
\newenvironment{warning}[1][]
{\begin{warn}[#1]\begin{leftbar}}
{\end{leftbar}\end{warn}}
\newtheorem{conj}[theo]{Conjecture}

\newtheorem{exam}[theo]{Example}

\newtheorem{exmp}[exer]{Exercise}

\newenvironment{statement}{\begin{quote}}{\end{quote}}

\let\sumnonlimits\sum
\let\prodnonlimits\prod
\let\cupnonlimits\bigcup
\let\capnonlimits\bigcap
\renewcommand{\sum}{\sumnonlimits\limits}
\renewcommand{\prod}{\prodnonlimits\limits}
\renewcommand{\bigcup}{\cupnonlimits\limits}
\renewcommand{\bigcap}{\capnonlimits\limits}
\excludecomment{verlong}
\includecomment{vershort}
\excludecomment{noncompile}

\newcommand{\arinj}{\ar@{_{(}->}}
\newcommand{\arinjrev}{\ar@{^{(}->}}
\newcommand{\arsurj}{\ar@{->>}}
\newcommand{\arelem}{\ar@{|->}}
\newcommand{\arback}{\ar@{<-}}
\newcommand{\symd}{\mathbin{\bigtriangleup}}

\newcommand{\sslash}{\mathbin{/\mkern-5mu/}}
\definecolor{darkgreen}{rgb}{0,.5,0}
\newtheoremstyle{plainsl}
{8pt plus 2pt minus 4pt}
{8pt plus 2pt minus 4pt}
{\slshape}
{0pt}
{\bfseries}
{.}
{5pt plus 1pt minus 1pt}
{}
\theoremstyle{plainsl}
\ihead{The $V/L$ recursion for Macdonald-7s, version June 1, 2026}
\ohead{page \thepage}
\begin{document}

\title{The $V/L$ recursion for Macdonald's 7th Variation Schur polynomials}
\author{Darij Grinberg}
\date{June 1, 2026}
\maketitle

\begin{abstract}
\textbf{Abstract.} We generalize and prove the recursive relation
\[
S_{\lambda}\left(  V\right)  =\sum_{L\subseteq V\text{ line}}S_{\lambda
}\left(  V \mathbin{/\mkern-5mu/} L\right)
\]
conjectured by I. G. Macdonald for his \textquotedblleft7th
variation\textquotedblright\ of the Schur functions. This variation is a
family of polynomials over a finite field that mimic the (straight and skew)
Schur polynomials using powers of the Frobenius.

\end{abstract}

\section*{***}

In \cite{Macdon92}, Ian Macdonald discusses nine variations on the Schur
functions. One of them -- the 7th Variation in his numbering -- can be
described as a \textquotedblleft function field version\textquotedblright, in
that it is defined over a finite field $\mathbb{F}_{q}$ and exhibits
$\operatorname*{GL}\left(  V\right)  $-symmetry rather than merely $S_{n}%
$-symmetry. Macdonald proves several properties of this family, including
Jacobi--Trudi formulas, a sum-over-tableaux formula, and a dual-Cauchy-like identity.

In this note, we shall prove a further property, which Macdonald left unproved
\cite[(7.25 ?)]{Macdon92}: a recursion for the 7th Variation Schur polynomials
$S_{\lambda}\left(  V\right)  $. We will in fact generalize it to the skew
version $S_{\lambda/\mu}\left(  V\right)  $ of these polynomials, showing that
any finite-dimensional vector subspace $V$ of a commutative $F$-algebra and
any two partitions $\lambda$ and $\mu$ of length $<\dim V$ satisfy%
\[
S_{\lambda/\mu}\left(  V\right)  =\sum_{\substack{L\subseteq V\text{ line
(i.e.,}\\1\text{-dimensional subspace)}}}S_{\lambda/\mu}\left(  V\sslash
L\right)  ,
\]
where we assume that $\mathbf{A}$ is an integral domain with an invertible
Frobenius morphism (so that $S_{\lambda/\mu}$ is well-defined) (Theorem
\ref{thm.skew-V/L}). Here, $V\sslash L$ denotes the internal quotient of $V$
by $L$, consisting of the products of elements of all cosets of $L$ in $V$.
Along the way, we will show a Pieri-like recursion (Lemma \ref{lem.SlamVL-ie}%
). After proving the above recursion, we will use it to show the explicit
formula%
\[
S_{\lambda}\left(  V\right)  =\sum_{\substack{V=V_{0}>V_{1}>\cdots
>V_{n}=0\\\text{is a complete flag in }V}}\ \ \prod_{i=1}^{n}H_{\lambda_{i}%
}\left(  V_{i-1}\sslash V_{i}\right)
\]
(Theorem \ref{thm.7.24}), spelling out the argument left to the reader by
Macdonald in \cite{Macdon92}. (In the appendix, we will also spell out some
other implicit arguments from \cite{Macdon92}.)

\begin{statement}
\textbf{Update:} After posting the first version of this note, I have learned
that a slight generalization of Theorem \ref{thm.skew-V/L} was proved by Hoang
in 2025 \cite[Theorem 1.24]{Hoang25}. He also gave a proof of (an equivalent
form of) Theorem \ref{thm.7.24} in \cite[Theorem 1.26]{Hoang25}. I was unaware
of his work when I wrote this note, and would like to thank him for informing
me of it and making it available. The proofs I give below are largely the
same, although more detailed and more explicit about the algebraic
infrastructure built upon.
\end{statement}

\section{Definitions and results}

We will study the 7th Variation of Schur functions \cite[7th Variation]%
{Macdon92}. First, we shall recall the relevant definitions (following
\cite[errata]{Macdon92} for the right level of generality).

\subsection{Macdonald's 7th Variation}

We let $\mathbb{N}:=\left\{  0,1,2,\ldots\right\}  $, and we set $\left[
n\right]  :=\left\{  1,2,\ldots,n\right\}  $ for each $n\in\mathbb{N}$.

Fix a finite field $F:=\mathbb{F}_{q}$ and any commutative $F$-algebra
$\mathbf{A}$. If $x_{1},x_{2},\ldots,x_{n}\in\mathbf{A}$ are elements, and if
$\alpha=\left(  \alpha_{1},\alpha_{2},\ldots,\alpha_{n}\right)  \in
\mathbb{N}^{n}$ is an $n$-tuple of nonnegative integers, then we define the
$\alpha$\emph{-alternant}%
\begin{equation}
A_{\alpha}:=\det\left(  x_{i}^{q^{\alpha_{j}}}\right)  _{i,j\in\left[
n\right]  } \label{eq.Aalpha=}%
\end{equation}
in $\mathbf{A}$. We also define the special $n$-tuple%
\[
\delta:=\left(  n-1,n-2,\ldots,1,0\right)  \in\mathbb{N}^{n}.
\]
We define the addition of $n$-tuples in $\mathbb{N}^{n}$ entrywise (i.e., if
$\alpha,\beta\in\mathbb{N}^{n}$ are two $n$-tuples, then $\alpha+\beta$
denotes the $n$-tuple whose $i$-th entry is the sum of the $i$-th entries of
$\alpha$ and $\beta$). Recall that a partition is a weakly decreasing finite
tuple of positive integers. Any partition of length $\leq n$ is identified
with an $n$-tuple by inserting trailing zeroes at its end. For any partition
$\lambda=\left(  \lambda_{1},\lambda_{2},\ldots,\lambda_{n}\right)
\in\mathbb{N}^{n}$ and any $n$ elements $x_{1},x_{2},\ldots,x_{n}\in
\mathbf{A}$, we set%
\[
S_{\lambda}\left(  x_{1},x_{2},\ldots,x_{n}\right)  :=A_{\lambda+\delta
}/A_{\delta},
\]
where the quotient is taken \textquotedblleft universally\textquotedblright%
\ (i.e., we treat the $x_{1},x_{2},\ldots,x_{n}$ as independent
indeterminates, then compute the quotient $A_{\lambda+\delta}/A_{\delta}$ in
the polynomial ring $F\left[  x_{1},x_{2},\ldots,x_{n}\right]  $, and only
then substitute the original values of $x_{1},x_{2},\ldots,x_{n}$ back). This
is a polynomial in $x_{1},x_{2},\ldots,x_{n}$, and can be easily seen to be
$\operatorname*{GL}\nolimits_{n}$-invariant, i.e., we have%
\begin{equation}
S_{\lambda}\left(  x_{1},x_{2},\ldots,x_{n}\right)  =S_{\lambda}\left(
y_{1},y_{2},\ldots,y_{n}\right)  \label{eq.Slam.GL-inv}%
\end{equation}
whenever $\left(  x_{1},x_{2},\ldots,x_{n}\right)  $ and $\left(  y_{1}%
,y_{2},\ldots,y_{n}\right)  $ are two bases of the same $F$-vector subspace of
$\mathbf{A}$ (see \cite[first paragraph on page 24]{Macdon92} for the proof).
Thus, for any finite-dimensional $F$-vector subspace $V$ of $\mathbf{A}$ and
any partition $\lambda$ of length $\leq\dim V$, we can set%
\begin{equation}
S_{\lambda}\left(  V\right)  :=S_{\lambda}\left(  x_{1},x_{2},\ldots
,x_{n}\right)  , \label{eq.Slam.SlamV1}%
\end{equation}
where $\left(  x_{1},x_{2},\ldots,x_{n}\right)  $ is an arbitrary basis of $V$
(the result will not depend on the choice of this basis). We also set
\begin{equation}
S_{\lambda}\left(  V\right)  :=0\qquad\text{if }\lambda\text{ is a partition
of length }>\dim V. \label{eq.Slam.SlamV0}%
\end{equation}

These $S_{\lambda}\left(  V\right)  $ are Macdonald's \emph{7th variation} of
the Schur functions. He introduced them in \cite[7th Variation]{Macdon92} and
proved several of their properties, in particular defining a skew version
$S_{\lambda/\mu}\left(  V\right)  $, to which we shall come later. The problem
we are interested in involves subspaces and \textquotedblleft internal
quotients\textquotedblright, which we shall introduce now.

\subsection{Internal quotients}

If $U$ is a finite-dimensional $F$-vector subspace of $\mathbf{A}$, then the
polynomial%
\[
f_{U}\left(  t\right)  :=\prod_{u\in U}\left(  t+u\right)  \in\mathbf{A}%
\left[  t\right]
\]
is a $q$-polynomial, i.e., it is an $F$-linear combination of the monomials
$t^{q^{i}}$ for $i\in\mathbb{N}$ (see \cite[(7.7) and (7.8)]{Macdon92} or
\cite[Theorem 1.6]{ppoly}). In particular, it satisfies $f_{U}\left(
x+y\right)  =f_{U}\left(  x\right)  +f_{U}\left(  y\right)  $ in the
polynomial ring $\mathbf{A}\left[  x,y\right]  $ as well as $f_{U}\left(
\lambda t\right)  =\lambda f_{U}\left(  t\right)  $ for each $\lambda\in F$
(since each $\lambda\in F$ satisfies $\lambda^{q}=\lambda$). Thus, this
polynomial $f_{U}$ defines an $F$-linear map%
\begin{align*}
\widetilde{f}_{U}:\mathbf{A}  &  \rightarrow\mathbf{A},\\
v  &  \mapsto f_{U}\left(  v\right)  ,
\end{align*}
which contains $U$ in its kernel (because for each $v\in U$, we have
$\widetilde{f}_{U}\left(  v\right)  =f_{U}\left(  v\right)  =\prod_{u\in
U}\underbrace{\left(  v+u\right)  }_{=0\text{ for }u=-v}=0$). When
$\mathbf{A}$ is an integral domain, the kernel of this map $\widetilde{f}_{U}$
is precisely $U$.

Now, if $U\subseteq V$ are two finite-dimensional $F$-vector subspaces of
$\mathbf{A}$, then we define the \emph{internal quotient} $V\sslash U$ to be
the image $\widetilde{f}_{U}\left(  V\right)  $ of the subspace $V$ under the
$F$-linear map $\widetilde{f}_{U}:\mathbf{A}\rightarrow\mathbf{A}$. This is an
$F$-vector subspace of $\mathbf{A}$ again. If $\mathbf{A}$ is an integral
domain, then this internal quotient $V\sslash U$ is isomorphic to the actual
quotient $V/U$ (by the first isomorphism theorem, since the $F$-linear map
$\left.  \widetilde{f}_{U}\mid_{V}\right.  :V\rightarrow\mathbf{A}$ has kernel
$U$ and image $V\sslash U$); thus, Macdonald simply denotes it by $V/U$. We
shall stick with the safer notation $V\sslash U$, however. Either way, the
design behind $V\sslash U$ is for $V\sslash U$ to act as a \textquotedblleft
canonical internal copy\textquotedblright\ of the quotient space $V/U$ inside
$\mathbf{A}$, in which each coset of $U$ in $V$ is replaced by the product of
all vectors in this coset. The magic of the Frobenius endomorphism $v\mapsto
v^{q}$ ensures that this replacement does not disturb the linear structure.

\subsection{Macdonald's conjectural recursion}

A \emph{line} in an $F$-vector space $W$ means a $1$-dimensional vector
subspace of $W$. The set of all lines in $W$ is known as the \emph{projective
space} $\mathbb{P}\left(  W\right)  $.

Now Macdonald's conjecture \cite[(7.25 ?)]{Macdon92} says the following:

\begin{theorem}
\label{thm.7.25}Assume that $\mathbf{A}$ is an integral domain. Let $V$ be a
finite-dimensional $F$-vector subspace of $\mathbf{A}$. Let $\lambda$ be a
partition of length $<\dim V$. Then,%
\[
S_{\lambda}\left(  V\right)  =\sum_{L\subseteq V\text{ line}}S_{\lambda
}\left(  V\sslash L\right)  .
\]
(The sum ranges over all lines $L$ in $V$.)
\end{theorem}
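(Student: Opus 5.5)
We plan to deduce the identity from a Pieri-like expansion of $S_\lambda(V)$ in terms of $S_\mu(V\sslash L)$ for a fixed line $L$, and then sum over all lines. Throughout, set $n=\dim V$ and fix a basis $(x_1,\ldots,x_n)$ of $V$. For a line $L=Fx_n$, the internal quotient $V\sslash L$ has basis $y_i=f_L(x_i)=x_i^q-x_n^{q-1}x_i$ for $i<n$. The alternant $A_{\alpha}(x_1,\ldots,x_n)$ can be simplified by column operations. Every entry $x_i^{q^{k}}$ can be rewritten via $f_L$, since $f_L(t)=t^q-x_n^{q-1}t$ and Frobenius iterates of $f_L$ are again $q$-polynomials. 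Expanding along the last row (the row of $x_n$), we expect a formula of the shape
\[
A_{\lambda+\delta}(x)=\pm\prod(\text{factor from }x_n)\cdot\sum_{\mu}c_{\lambda,\mu}(x_n)\,A_{\mu+\delta'}(y),
\]
with $\delta'=(n-2,\ldots,0)$. Dividing by the analogous formula for $A_\delta$ gives
\[
S_\lambda(V)=\sum_{\mu}h_{\lambda/\mu}(L)\,S_\mu(V\sslash L),
\]
where $\mu$ ranges over partitions interlacing $\lambda$. Here the coefficient $h_{\lambda/\mu}(L)$ depends only on $x_n$, through some polynomial in $x_n^{q-1}$; it is a complete-homogeneous-type function of the $1$-dimensional space $L$. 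This is the $V/L$ analogue of the branching rule $s_\lambda(x_1,\ldots,x_n)=\sum_\mu s_\mu(x_1,\ldots,x_{n-1})\,x_n^{|\lambda/\mu|}$. The intended form is the "Pieri-like recursion'' of Lemma \ref{lem.SlamVL-ie}. The cleanest way to get it is probably to work with the skew version and the Jacobi--Trudi formula: write $S_\lambda(V)=\det(H_{\lambda_i-i+j}(V))$ and use a generating-function identity relating $H_k(V)$ to $H_j(V\sslash L)$ and powers of $u^{q-1}$ for $u\in L$.

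The second step sums this expansion over all lines $L\subseteq V$, and we then need
\[
\sum_{L\subseteq V}h_{\lambda/\mu}(L)=\begin{cases}1,&\mu=\lambda,\\0,&\text{otherwise,}\end{cases}
\]
at least after combining with the factor $S_\mu(V\sslash L)$. That combination is a problem, since $S_\mu(V\sslash L)$ genuinely depends on $L$. So instead we plan to argue by induction on $\dim V$, or to use an inclusion–exclusion form of the recursion that expresses $S_\lambda(V\sslash L)$ in terms of $S_\lambda(V)$ and "lower'' pieces. The point is that the term with $\mu=\lambda$ has coefficient $1$ for each line, giving $\sum_L S_\lambda(V\sslash L)$ directly. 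The remaining terms must then cancel after summation.

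A more promising route inverts the roles. Expand $S_\lambda(V\sslash L)$ as an alternating combination $\sum_{k}(-1)^k e_k(L)$-type coefficients times $S_{\lambda\cup\text{strips}}(V)$, which is the dual Pieri form. Then use $\sum_{L}u_L^{(q-1)m}$-type power sums over projective space. These sums are computable in $\mathbb{F}_q$-linear algebra: sums of $u^{q^a-q^b}$ over $u\in V$ vanish except in specific degrees, by the standard Dickson-invariant facts, e.g. Moore-determinant identities. Together with $|\mathbb{P}(V)|\equiv 1 \pmod p$, they should leave exactly $S_\lambda(V)$. The hypothesis $\ell(\lambda)<\dim V$ is precisely what is needed so that no boundary term (a partition of length $n$, vanishing on $V\sslash L$) spoils the cancellation.

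The main obstacle is this cancellation: evaluating $\sum_{L\in\mathbb{P}(V)}$ of the $L$-dependent coefficients, which are polynomials in $u^{q-1}$ for $u$ spanning $L$, times quantities depending on $V\sslash L$. We would first try to express everything over $V$ itself, using $\sum_{u\in V\setminus 0}$ and $\frac{1}{q-1}$-normalization to pass from lines to vectors. We would then recognize the resulting sums as alternants $A_\alpha(x)$ with a repeated column, which therefore vanish, except for the single surviving term.
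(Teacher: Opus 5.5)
Your ``more promising route'' has the right architecture, and it is exactly the paper's: write $S_\lambda(V\sslash L)$ as a combination of the $L$-independent quantities $S_\nu(V)$, sum over lines, pass to a sum over nonzero vectors, kill all but one term by a power-sum vanishing over $\mathbb{F}_q$, and use $q^n-1=-1$ in $F$. But the two steps that carry the proof are not supplied, and the mechanisms you name for them are off.

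\textbf{The expansion.} It must be pinned down exactly:
\[
S_{\lambda}(V\sslash L)=\sum_{\lambda/\nu\text{ vertical strip}}(-1)^{|\lambda|-|\nu|}\,\ell^{N_\nu}\,S_{\nu}(V),\qquad N_\nu:=(q-1)\sum_{i:\,\lambda_i>\nu_i}q^{\lambda_i+n-1-i},
\]
where $L=\operatorname{span}(\ell)$ and $n=\dim V$. So $\nu$ arises from $\lambda$ by \emph{removing} a vertical strip, not as ``$\lambda\cup$ strips''. The precise exponents matter, because the cancellation depends on them.

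This formula does not come from the plain Jacobi--Trudi determinant. Here $\det(H_{\lambda_i-i+j}(V))$ is not $S_\lambda(V)$: Macdonald's entries carry twists $\varphi^{1-j}$. That is why the paper works in an algebra with invertible Frobenius and then descends to a general integral domain via the perfect closure of a polynomial ring and functoriality, a step your plan omits.

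The paper obtains the formula in three moves. It inverts Macdonald's factorization $\mathbf{H}(V)=\mathbf{H}(V\sslash L)\cdot\varphi^{n-1}\mathbf{H}(L)$ using $\mathbf{H}(L)^{-1}=\mathbf{E}(L)$. It then applies Cauchy--Binet to the relevant minors. Finally it checks that the resulting minors of $\mathbf{E}(L)$ vanish unless $\lambda/\nu$ is a vertical strip; in that case, up to the sign $(-1)^{|\lambda|-|\nu|}$, they equal $\prod_{i:\lambda_i>\nu_i}\varphi^{\lambda_i-i}(-\pi(L))$ with $-\pi(L)=\ell^{q-1}$. Your row expansion of alternants only guesses at a shape, and does so in the branching direction you yourself discard.

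\textbf{The cancellation.} You flag this as the main obstacle and leave it open. The needed fact is: for $0<k<n$ and any $a_1,\ldots,a_k\in\mathbb{N}$,
\[
\sum_{w\in V\setminus\{0\}}w^{(q-1)(q^{a_1}+\cdots+q^{a_k})}=0 .
\]
This is a Chevalley--Warning degree count, not a Dickson-invariant or repeated-column-alternant argument. Note also that the exponents are sums of $k$ terms $(q-1)q^{a_j}$, not single differences $q^a-q^b$.

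Write $w=\sum_i\alpha_ix_i$. The summand becomes $\prod_{j=1}^{k}\bigl(\sum_i\alpha_ix_i^{q^{a_j}}\bigr)^{q-1}$, a polynomial of degree $k(q-1)<n(q-1)$ in $\alpha_1,\ldots,\alpha_n$. Each of its monomials therefore has some exponent $<q-1$. Summing over $F^n$ gives $0$, since $\sum_{\alpha\in F}\alpha^i=0$ for $0\le i<q-1$.

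This is also where $\ell(\lambda)<\dim V$ genuinely enters: it forces every nonempty vertical strip $\lambda/\nu$ to occupy between $1$ and $n-1$ rows, i.e.\ $0<k<n$. Your ``boundary term'' explanation is not the mechanism; $\nu\subseteq\lambda$ already has length $<n$.

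Your last paragraph also slips back to coefficients multiplied by ``quantities depending on $V\sslash L$''. In the correct form the only $L$-dependence is the factor $\ell^{N_\nu}$, which is well defined on $L$ because $q-1$ divides $N_\nu$. That is exactly what lets the sum over lines be evaluated term by term. With both lemmas in place, only $\nu=\lambda$ survives, contributing $-(q^n-1)S_\lambda(V)=S_\lambda(V)$.
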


Macdonald proved this for $\lambda=\left(  1^{r}\right)  $ in \cite[\S I.2,
Example 26 (d)]{Macdon95}.

We shall prove Theorem \ref{thm.7.25} in the general case, along with a
generalization to skew partitions. Before we do any of this, however, we need
to introduce more notations.

\subsection{On the Frobenius morphism}

The skew version of Macdonald's 7th Variation requires a certain technical
condition on $\mathbf{A}$, defined in terms of the so-called Frobenius
morphism. Let us briefly recall it.

The \emph{Frobenius morphism} $\varphi$ of the commutative $F$-algebra
$\mathbf{A}$ is defined to be the map%
\begin{align*}
\varphi:\mathbf{A}  &  \rightarrow\mathbf{A},\\
a  &  \mapsto a^{q}.
\end{align*}
It is well-known that $\varphi$ is an $F$-algebra endomorphism (since
$F=\mathbb{F}_{q}$). Note that%
\begin{equation}
\varphi^{i}\left(  a\right)  =a^{q^{i}}\ \ \ \ \ \ \ \ \ \ \text{for each
}a\in\mathbf{A}\text{ and }i\in\mathbb{N}. \label{eq.phiia}%
\end{equation}

If $\mathbf{A}$ is an integral domain, then the Frobenius morphism $\varphi$
is injective (since $a^{q}=0$ entails $a=0$ in this case). However, $\varphi$
is often not surjective (since not every element of $\mathbf{A}$ is a $q$-th
power). The next remark shows some ways to find commutative $F$-algebras whose
$\varphi$ is bijective:

\begin{remark}
\label{rmk.phi-bij.1}\ \ 

\begin{enumerate}
\item[\textbf{(a)}] Let $\mathbb{N}\left[  1/q\right]  $ denote the set of all
nonnegative rational numbers of the form $i/q^{j}$ with $i,j\in\mathbb{N}$. We
call these numbers \emph{nonnegative }$q$\emph{-local integers}. Their set
$\mathbb{N}\left[  1/q\right]  $ is a monoid under addition (and even a
commutative semiring).

If $\mathbf{A}$ is a polynomial ring $F\left[  x_{1},x_{2},\ldots
,x_{n}\right]  $ over $F$, then the Frobenius morphism $\varphi:\mathbf{A}%
\rightarrow\mathbf{A}$ is not surjective (unless $n=0$), but $\mathbf{A}$ can
be embedded into a larger commutative $F$-algebra $\widehat{\mathbf{A}}$ whose
Frobenius morphism $\varphi:\widehat{\mathbf{A}}\rightarrow\widehat{\mathbf{A}%
}$ is invertible. This larger algebra $\widehat{\mathbf{A}}$ can be defined
informally as the ring of all \textquotedblleft polynomials\textquotedblright%
\ in $x_{1},x_{2},\ldots,x_{n}$ over $F$, where the exponents are not
restricted to nonnegative integers but can be any nonnegative $q$-local
integers. Formally speaking, this is the monoid algebra (over $F$) of the
additive monoid $\left(  \mathbb{N}\left[  1/q\right]  \right)  ^{n}$ (where
we rename each element $\left(  a_{1},a_{2},\ldots,a_{n}\right)  \in\left(
\mathbb{N}\left[  1/q\right]  \right)  ^{n}$ of this monoid as the monomial
$x_{1}^{a_{1}}x_{2}^{a_{2}}\cdots x_{n}^{a_{n}}$). The embedding of
$\mathbf{A}$ into $\widehat{\mathbf{A}}$ is the obvious one (sending each
$x_{i}$ to $x_{i}$). Note that $\widehat{\mathbf{A}}$ is an integral domain
(this can be proved in the same way as for usual polynomial rings over a field).

\item[\textbf{(b)}] More generally, if $\mathbf{A}$ is any commutative
$F$-algebra whose Frobenius morphism $\varphi:\mathbf{A}\rightarrow\mathbf{A}$
is injective, then we can construct a commutative $F$-algebra
$\widehat{\mathbf{A}}$ that contains $\mathbf{A}$ as a subalgebra and has an
invertible Frobenius morphism $\varphi:\widehat{\mathbf{A}}\rightarrow
\widehat{\mathbf{A}}$. Namely, we define $\widehat{\mathbf{A}}$ as a direct
limit of an infinite sequence
\[
\mathbf{A}\overset{\varphi}{\longrightarrow}\mathbf{A}\overset{\varphi
}{\longrightarrow}\mathbf{A}\overset{\varphi}{\longrightarrow}\cdots
\]
of copies of $\mathbf{A}$, where each copy embeds into the next via the
$F$-algebra morphism $\varphi$. We call this $\widehat{\mathbf{A}}$ the
\emph{perfect closure} of $\mathbf{A}$ (see, e.g., \cite[Proposition
1.4]{Leptie22}). If $\mathbf{A}$ is a polynomial ring $F\left[  x_{1}%
,x_{2},\ldots,x_{n}\right]  $, then this $\widehat{\mathbf{A}}$ is isomorphic
to the monoid algebra $\widehat{\mathbf{A}}$ from Remark \ref{rmk.phi-bij.1}
\textbf{(a)}. In general, if $\mathbf{A}$ is an integral domain, then
$\widehat{\mathbf{A}}$ is an integral domain as well.
\end{enumerate}
\end{remark}

\subsection{The skew 7th Variation}

We need some more notations before we can state our result.

For every $r\in\mathbb{N}$ and any finite-dimensional $F$-vector subspace $V$
of $\mathbf{A}$, we set%
\[
H_{r}\left(  V\right)  :=S_{\left(  r\right)  }\left(  V\right)
\]
and%
\[
E_{r}\left(  V\right)  :=S_{\left(  1^{r}\right)  }\left(  V\right)
,\ \ \ \ \ \ \ \ \ \ \text{where }\left(  1^{r}\right)  :=\left(
\underbrace{1,1,\ldots,1}_{r\text{ times}}\right)  .
\]
Note that if $r>\dim V$, then $E_{r}\left(  V\right)  =S_{\left(
1^{r}\right)  }\left(  V\right)  =0$ by (\ref{eq.Slam.SlamV0}), since $\left(
1^{r}\right)  $ is a partition of length $r$. These polynomials $H_{r}\left(
V\right)  $ and $E_{r}\left(  V\right)  $ are analogues of the complete
homogeneous and elementary symmetric polynomials, respectively. We also set
$H_{r}\left(  V\right)  :=0$ and $E_{r}\left(  V\right)  :=0$ for all negative
$r$.

Macdonald makes the following definition: If $\mathbf{A}$ is a commutative
$F$-algebra whose Frobenius morphism $\varphi:\mathbf{A}\rightarrow\mathbf{A}$
is invertible, and if $\lambda=\left(  \lambda_{1},\lambda_{2},\ldots
,\lambda_{k}\right)  $ and $\mu=\left(  \mu_{1},\mu_{2},\ldots,\mu_{k}\right)
$ are two partitions, and if $V$ is a finite-dimensional $F$-vector subspace
of $\mathbf{A}$, then the \emph{skew 7th Variation Schur polynomial} of $V$
corresponding to $\lambda/\mu$ is defined by%
\begin{equation}
S_{\lambda/\mu}\left(  V\right)  :=\det\left(  \left(  \varphi^{\mu_{j}%
-j+1}H_{\lambda_{i}-\mu_{j}-i+j}\left(  V\right)  \right)  _{i,j\in\left[
k\right]  }\right)  . \label{eq.Slammu}%
\end{equation}
Here, the power $\varphi^{\mu_{j}-j+1}$ of $\varphi$ is well-defined even if
the exponent is negative, since $\varphi$ is invertible. It is easy to see
(see \cite[errata, \textquotedblleft page 26, (7.11)\textquotedblright%
]{Macdon92}) that the right hand side of (\ref{eq.Slammu}) does not depend on
$k$ (as long as $\lambda$ and $\mu$ have length $\leq k$ each). It is
furthermore easy to see that%
\begin{equation}
S_{\lambda/\mu}\left(  V\right)  =0\ \ \ \ \ \ \ \ \ \ \text{unless }%
\mu\subseteq\lambda\label{eq.Slammu=0ifnotsub}%
\end{equation}
(a particular case of \cite[(7.12)]{Macdon92}); this means that the
\textquotedblleft interesting\textquotedblright\ skew 7th Variation Schur
polynomials are indexed by skew partitions. If the Frobenius morphism
$\varphi:\mathbf{A}\rightarrow\mathbf{A}$ is not invertible, then we can still
define $S_{\lambda/\mu}\left(  V\right)  $ in the perfect closure
$\widehat{\mathbf{A}}$ of $\mathbf{A}$, provided that $\varphi$ is injective.

As usual, we let $\varnothing$ denote the empty partition $\left(  {}\right)
$. If $\lambda$ is any partition and $V$ is any finite-dimensional $F$-vector
subspace of $\mathbf{A}$, then%
\begin{equation}
S_{\lambda/\varnothing}\left(  V\right)  =S_{\lambda}\left(  V\right)  .
\label{eq.Slam0}%
\end{equation}
(This follows by comparing (\ref{eq.Slammu}) with \cite[(7.10)]{Macdon92} if
the length of $\lambda$ is $\leq\dim V$. Otherwise, both sides of this
equality are $0$, since \cite[(7.12)]{Macdon92} shows that $S_{\lambda
/\varnothing}\left(  V\right)  =0$ (because $\lambda_{1}^{\prime}%
-\varnothing_{1}^{\prime}=\lambda_{1}^{\prime}=\ell\left(  \lambda\right)
>\dim V$) whereas (\ref{eq.Slam.SlamV0}) yields $S_{\lambda}\left(  V\right)
=0$.)

We are now able to state the skew generalization of Theorem \ref{thm.7.25}
that constitutes the main result of this paper:

\begin{theorem}
\label{thm.skew-V/L}Assume that $\mathbf{A}$ is an integral domain such that
the Frobenius morphism $\varphi:\mathbf{A}\rightarrow\mathbf{A}$ is
invertible. Let $V$ be a finite-dimensional $F$-vector subspace of
$\mathbf{A}$. Let $\lambda$ and $\mu$ be two partitions of length $<\dim V$
each. Then,%
\[
S_{\lambda/\mu}\left(  V\right)  =\sum_{L\subseteq V\text{ line}}%
S_{\lambda/\mu}\left(  V\sslash L\right)  .
\]

\end{theorem}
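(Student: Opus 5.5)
The plan is to compare $V$ with $V\sslash L$ for a single line $L$ through a Pieri-type expansion, and then to sum that expansion over all lines. The key structural input is the factorization $f_V=f_{V\sslash L}\circ f_L$ of $q$-polynomials. It holds because $\prod_{v\in V}(t+v)$ can be grouped along the cosets of $L$, and each coset contributes $f_L(t+v)=f_L(t)+f_L(v)$.

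\textbf{Step 1: generating series for the $H_r$.} By Macdonald's results (the analogue of $\sum h_r t^r=\prod(1-x_it)^{-1}$), the $H_r(V)$, suitably Frobenius-twisted, are the coefficients of the compositional inverse of the $q$-polynomial $f_V$ (or of its normalized reciprocal). I will invert the factorization to get $f_V^{-1}=f_L^{-1}\circ f_{V\sslash L}^{-1}$. Comparing coefficients then gives an identity of the shape
\[
H_r(V)=\sum_{i+j=r}\varphi^{a_{i,j}}\!\left(H_i(L)\right)\varphi^{b_{i,j}}\!\left(H_j(V\sslash L)\right)
\]
for explicit integer shifts $a_{i,j},b_{i,j}$. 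I also want the inverse relation, which expresses $H_r(V\sslash L)$ through the $H_j(V)$ and the $E$-type quantities of $L$. Since $L=Fx$ is one-dimensional, these are explicit: $E_0(L)=1$, $E_1(L)=x^{q-1}$, and all higher ones vanish. So the inverse relation has at most two terms in each degree.

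\textbf{Step 2: Pieri-like expansion.} Substituting the inverse relation into the skew Jacobi--Trudi determinant (\ref{eq.Slammu}) for $V\sslash L$ and expanding multilinearly should give
\[
S_{\lambda/\mu}(V\sslash L)=\sum_{\nu}c_{\nu}(x)\,S_{\lambda/\nu}(V),
\]
where $\nu$ ranges over partitions with $\nu/\mu$ a vertical strip, and each coefficient $c_\nu(x)$ is $\pm$ a monomial $x^{N_\nu}$. The exponent $N_\nu$ is a sum of terms $q^{a}(q-1)$ coming from the Frobenius twists, with $c_\mu=1$. This is presumably the content of Lemma \ref{lem.SlamVL-ie}. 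The hypothesis that $\varphi$ is invertible is used here to move the twists freely.

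\textbf{Step 3: summing over lines.} Write $n=\dim V$, so $V$ has $(q^n-1)/(q-1)$ lines, and $x^{N}$ is constant on each line whenever $(q-1)\mid N$. Summing over lines therefore equals $\frac{1}{q-1}\sum_{0\ne x\in V}x^{N}$, computed in characteristic $p$ by first summing over the $q-1$ nonzero multiples of $x$ in $\mathbb{Z}$.
\begin{itemize}
\item For $\nu=\mu$, the $L$-independent term contributes $\frac{q^n-1}{q-1}S_{\lambda/\mu}(V)=S_{\lambda/\mu}(V)$, since $\frac{q^n-1}{q-1}\equiv1 \pmod p$.
\item For $\nu\neq\mu$, I need $\sum_{x\in V\setminus 0}x^{N_\nu}=0$. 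The standard fact is that $\sum_{x\in V}x^{N}=0$ whenever the base-$q$ digit sum of $N$ is less than $(q-1)n$. It follows by writing $x=\sum c_iv_i$, expanding via the multinomial/Lucas theorem, and noting that $\sum_{c\in F}c^m=0$ unless $(q-1)\mid m$ with $m>0$. Since $N_\nu$ is a sum of $|\nu/\mu|$ terms $(q-1)q^{a}$, its digit sum is at most $(q-1)|\nu/\mu|$.
\end{itemize}

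\textbf{Main obstacle.} The hard part is making the vanishing in Step 3 work. The bound $|\nu/\mu|<n$ is not automatic, so the hypothesis $\ell(\lambda),\ell(\mu)<\dim V$ must enter here. It also matters that terms $S_{\lambda/\nu}(V)$ with $\ell(\nu)>\ell(\lambda)$ vanish by (\ref{eq.Slammu=0ifnotsub}). Because $\nu/\mu$ is a vertical strip inside $\lambda$, at most $\ell(\lambda)<n$ of its boxes matter, which keeps the digit sum below $(q-1)n$. Getting the exact exponents $N_\nu$ right, so that the digit-sum estimate applies and carries between $q$-adic digits do not break it, is where I expect the real work. I would first verify the whole scheme for $\lambda=(r)$, $\mu=\varnothing$ directly.
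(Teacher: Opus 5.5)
Your overall plan matches the paper's: a one-line Pieri-type expansion, then a sum over lines in which every term but one vanishes by a Chevalley--Warning-type argument. However, the expansion you state in Step 2 is false, and Step 3 depends on its exact form. The vertical strip has to be removed from the \emph{outer} shape. The correct identity (Lemma \ref{lem.SlamVL-ie}) is
\[
S_{\lambda/\mu}(V\sslash L)=\sum_{\lambda/\nu\text{ vertical strip}}(-1)^{|\lambda|-|\nu|}\,x^{(q-1)\sum_{i:\,\lambda_i>\nu_i}q^{\lambda_i+n-1-i}}\,S_{\nu/\mu}(V).
\]
An expansion $\sum_{\nu/\mu\text{ vertical strip}}c_\nu(x)\,S_{\lambda/\nu}(V)$ with coefficients depending only on $x$ does not exist. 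Take $\lambda=(1,1)$, $\mu=\varnothing$, $n\geq 3$. Since $S_{(1,1)/(1)}(V)=\varphi^{-1}E_1(V)$, your form reads $E_2(V)+c_{(1)}(x)\,\varphi^{-1}E_1(V)+c_{(1,1)}(x)$. The true value, from the lemma or from comparing coefficients in $f_V=f_{V\sslash L}\circ f_L$, is $E_2(V)-x^{(q-1)q^{n-2}}E_1(V)+x^{(q-1)(q^{n-1}+q^{n-2})}$. Writing $y=\varphi^{-1}E_1(V)$, you would need $x^{(q-1)q^{n-2}}y^{q}+c_{(1)}(x)\,y$ to depend only on $x$, which fails as $V$ varies. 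Classically the inner and outer Pieri expansions agree because the coproduct is cocommutative. Here, the factorization behind your Step 1 is $\mathbf{H}(V\sslash L)=\mathbf{H}(V)\,\varphi^{n-1}(\mathbf{E}(L))$ (see (\ref{pf.lem.Stilde.coproduct.HHE})). This acts on the $\lambda$-indices, and these Frobenius-twisted matrices do not commute.

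Your own method produces the correct formula once you actually carry it out. The two-term relation is $H_r(V\sslash L)=H_r(V)-x^{(q-1)q^{n+r-2}}H_{r-1}(V)$. Apply $\varphi^{\mu_j-j+1}$ to it with $r=\lambda_i-\mu_j-i+j$, as in the $(i,j)$ entry of (\ref{eq.Slammu}). The coefficient becomes $x^{(q-1)q^{\lambda_i+n-1-i}}$, which depends only on the row $i$, and the twist on $H_{r-1}$ is unchanged. So the lowered term is the entry for $\lambda_i-1$, not for $\mu_j+1$, which would carry the twist $\varphi^{\mu_j-j+2}$. Multilinearity in the rows then gives the displayed formula, since any row choice that makes $\lambda$ minus the chosen unit vectors a non-partition produces two equal rows or a zero row. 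This row expansion is a shorter route than the paper's Cauchy--Binet argument through $\widetilde{S}_{\lambda/\nu}(L)$. With the corrected formula, Step 3 works as you planned, except that the surviving term is $\nu=\lambda$ rather than $\nu=\mu$. The bound you need is $|\lambda/\nu|\leq\ell(\lambda)\leq n-1$, since a vertical strip has at most one box per row and rows $\geq n$ are empty. The obstacle you anticipated is not real: base-$q$ digit sums are subadditive, so carries only lower the digit sum.
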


Applying Theorem \ref{thm.skew-V/L} to $\mu=\varnothing$ (the empty
partition), and simplifying using (\ref{eq.Slam0}), we obtain Theorem
\ref{thm.7.25} in the case when $\mathbf{A}$ is an integral domain whose
Frobenius morphism $\varphi$ is invertible. From this case, we can easily
deduce the general case (see Section \ref{sec.apx-orig-7.25} in the Appendix).
Our main quest is thus to prove Theorem \ref{thm.skew-V/L}.

\section{Lemmas and the proof}

\subsection{A few elementary lemmas}

In preparation for the proof, we first establish some lemmas about finite
fields and permutations. The first one helps us simplify (or complicate,
depending on one's viewpoint) sums over lines in $V$:

\begin{lemma}
\label{lem.linesum}Let $V$ be a finite-dimensional $F$-vector space. Let $A$
be an $F$-vector space. Let $b_{L}$ be an element of $A$ for each line
$L\subseteq V$. Then,%
\[
\sum_{L\subseteq V\text{ line}}b_{L}=-\sum_{w\in V\setminus\left\{  0\right\}
}b_{\operatorname*{span}\left(  w\right)  }.
\]

\end{lemma}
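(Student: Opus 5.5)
We sort the nonzero vectors of $V$ by the line they span and count. Each nonzero vector $w\in V$ spans exactly one line, namely $\operatorname*{span}\left(w\right)$, and conversely each line $L\subseteq V$ contains exactly $\left\vert L\right\vert -1=q-1$ nonzero vectors. Since $b_{\operatorname*{span}\left(w\right)}=b_L$ for every nonzero $w\in L$, we get
\[
\sum_{w\in V\setminus\left\{0\right\}}b_{\operatorname*{span}\left(w\right)}
=\sum_{L\subseteq V\text{ line}}\ \sum_{w\in L\setminus\left\{0\right\}}b_{L}
=\sum_{L\subseteq V\text{ line}}\left(q-1\right)b_{L}.
\]

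The remaining step is to evaluate the scalar $q-1$ inside the $F$-vector space $A$. Since $F=\mathbb{F}_{q}$ has characteristic $p$ and $q$ is a power of $p$, we have $q\cdot 1_F=0$, so $\left(q-1\right)\cdot 1_F=-1_F$. Hence $\left(q-1\right)b_L=-b_L$ in $A$ for every $L$. Substituting this and multiplying both sides by $-1$ yields the claimed identity.

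There is no real obstacle here. The only point needing care is that the integer multiple $\left(q-1\right)b_L$ in $A$ equals the scalar multiple by $\left(q-1\right)1_F=-1_F$; this holds because $A$ is an $F$-vector space, so its integer multiples factor through $F$.
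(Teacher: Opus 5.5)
Your proposal is correct and follows essentially the same route as the paper: group the nonzero vectors $w$ by the line $\operatorname*{span}\left(w\right)$, use $\left\vert L\setminus\left\{0\right\}\right\vert = q-1$, and then use $q-1=-1$ in $F$. Your explicit remark that the integer multiple $\left(q-1\right)b_L$ agrees with scalar multiplication by $\left(q-1\right)1_F=-1_F$ is a detail the paper uses without spelling out.
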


\begin{proof}
We have $q=0$ in $F$ (since $F=\mathbb{F}_{q}$), and thus $q-1=-1$ in $F$.
Moreover, for each line $L\subseteq V$, we have $\left\vert L\right\vert =q$
(since a line is a $1$-dimensional $F$-vector space) and $0\in L$, and
therefore%
\begin{equation}
\left\vert L\setminus\left\{  0\right\}  \right\vert =\left\vert L\right\vert
-1=q-1 \label{pf.lem.linesum.line}%
\end{equation}
(since $\left\vert L\right\vert =q$).

For each vector $w\in V\setminus\left\{  0\right\}  $, the span
$\operatorname*{span}\left(  w\right)  $ is a line in $V$. Thus,%
\begin{align*}
\sum_{w\in V\setminus\left\{  0\right\}  }b_{\operatorname*{span}\left(
w\right)  }  &  =\sum_{L\subseteq V\text{ line}}\ \ \sum_{\substack{w\in
V\setminus\left\{  0\right\}  ;\\\operatorname*{span}\left(  w\right)
=L}}\ \underbrace{b_{\operatorname*{span}\left(  w\right)  }}%
_{\substack{=b_{L}\\\text{(since }\operatorname*{span}\left(  w\right)
=L\text{)}}}=\sum_{L\subseteq V\text{ line}}\ \ \underbrace{\sum
_{\substack{w\in V\setminus\left\{  0\right\}  ;\\\operatorname*{span}\left(
w\right)  =L}}b_{L}}_{\substack{=\sum_{w\in L\setminus\left\{  0\right\}
}b_{L}\\=\left\vert L\setminus\left\{  0\right\}  \right\vert b_{L}}}\\
&  =\sum_{L\subseteq V\text{ line}}\underbrace{\left\vert L\setminus\left\{
0\right\}  \right\vert }_{\substack{=q-1\\\text{(by (\ref{pf.lem.linesum.line}%
))}}}b_{L}=\underbrace{\left(  q-1\right)  }_{=-1\text{ in }F}\ \ \sum
_{L\subseteq V\text{ line}}b_{L}=-\sum_{L\subseteq V\text{ line}}b_{L}.
\end{align*}
This yields the lemma.
\end{proof}

Our next lemma is so trivial it is barely worth the name. Following Macdonald,
we use the notation $\pi\left(  U\right)  $ for the product $\prod_{u\in
U\setminus\left\{  0\right\}  }u$ of all nonzero vectors in a
finite-dimensional $F$-vector subspace $U$ of $\mathbf{A}$. The lemma gives an
explicit formula for this product when $U$ is a line:

\begin{lemma}
\label{lem.pispanv}Let $v\in\mathbf{A}$ be nonzero. Then,%
\begin{equation}
\pi\left(  \operatorname*{span}\left(  v\right)  \right)  =-v^{q-1}.
\label{eq.pispanV}%
\end{equation}

\end{lemma}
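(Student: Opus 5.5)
We compute directly from the definition of $\pi$. Since $v\neq0$, the line $\operatorname*{span}\left(  v\right)  $ consists of the vectors $av$ for $a\in F$. Distinct scalars give distinct vectors here; this needs the one-line check that $av=bv$ with $a\neq b$ would force $v=\left(  a-b\right)  ^{-1}\left(  a-b\right)  v=0$. Hence the nonzero vectors of $\operatorname*{span}\left(  v\right)  $ are exactly the $av$ with $a\in F\setminus\left\{  0\right\}  $, each listed once. This gives
\[
\pi\left(  \operatorname*{span}\left(  v\right)  \right)  =\prod_{a\in F\setminus\left\{  0\right\}  }\left(  av\right)  =\left(  \prod_{a\in F\setminus\left\{  0\right\}  }a\right)  v^{q-1},
\]
using commutativity of $\mathbf{A}$ and $\left\vert F\setminus\left\{  0\right\}  \right\vert =q-1$.

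It remains to show $\prod_{a\in F\setminus\left\{  0\right\}  }a=-1$ in $F$, which is the finite-field Wilson theorem. I would prove it via the polynomial identity $\prod_{a\in F\setminus\left\{  0\right\}  }\left(  t-a\right)  =t^{q-1}-1$ in $F\left[  t\right]  $. Every nonzero $a\in F$ satisfies $a^{q-1}=1$ by Lagrange's theorem applied to the group $F^{\times}$ of order $q-1$. So both sides are monic of degree $q-1$ and share the $q-1$ distinct roots $a\in F\setminus\left\{  0\right\}  $, and they are therefore equal. Comparing constant terms gives $\left(  -1\right)  ^{q-1}\prod a=-1$.

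Finally, $\left(  -1\right)  ^{q-1}=1$: when $q$ is odd, $q-1$ is even, and when $q$ is even, $-1=1$ in characteristic $2$. Hence $\prod a=-1$ and $\pi\left(  \operatorname*{span}\left(  v\right)  \right)  =-v^{q-1}$. Note that no integral-domain hypothesis on $\mathbf{A}$ is used anywhere in this argument.
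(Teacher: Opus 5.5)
Your proof is correct and follows the same route as the paper. Both list the nonzero vectors of $\operatorname*{span}\left(v\right)$ as the distinct elements $av$ with $a\in F\setminus\left\{0\right\}$, factor out $v^{q-1}$, and invoke Wilson's theorem for $F$. The only difference is how that theorem is proved: you compare constant terms in $\prod_{a\in F\setminus\left\{0\right\}}\left(t-a\right)=t^{q-1}-1$, whereas the paper pairs each element with its inverse and notes that only $\pm1$ remain unpaired; both arguments are sound.
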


\begin{proof}
By its definition, $\pi\left(  \operatorname*{span}\left(  v\right)  \right)
$ is the product of all nonzero vectors in $\operatorname*{span}\left(
v\right)  $.

However, the elements of $\operatorname*{span}\left(  v\right)  $ are
precisely the vectors $\alpha v$ for $\alpha\in F$, and furthermore these
vectors are all distinct (since $v$ is nonzero). Hence, the nonzero vectors in
$\operatorname*{span}\left(  v\right)  $ are precisely the vectors $\alpha v$
for $\alpha\in F\setminus\left\{  0\right\}  $, and furthermore these vectors
are all distinct. Therefore, their product is%
\[
\prod_{\alpha\in F\setminus\left\{  0\right\}  }\left(  \alpha v\right)
=\left(  \prod_{\alpha\in F\setminus\left\{  0\right\}  }\alpha\right)
v^{\left\vert F\setminus\left\{  0\right\}  \right\vert }.
\]

However, Wilson's theorem for finite fields says that $\prod_{\alpha\in
F\setminus\left\{  0\right\}  }\alpha=-1$\ \ \ \ \footnote{We recall the
proof: In the product $\prod_{\alpha\in F\setminus\left\{  0\right\}  }\alpha
$, each factor $\alpha$ can be paired with its inverse $\alpha^{-1}$ unless it
equals this inverse (i.e., unless $\alpha=\alpha^{-1}$). Thus, all the factors
of this product cancel out except for those that satisfy $\alpha=\alpha^{-1}$.
But the latter factors are precisely $1$ and $-1$ (since $\alpha=\alpha^{-1}$
is equivalent to $\alpha^{2}=1$, that is, $\alpha^{2}-1=0$, that is, $\left(
\alpha-1\right)  \left(  \alpha+1\right)  =0$, that is, $\alpha\in\left\{
1,-1\right\}  $), and multiply to $-1$ (this holds even if these two factors
are equal, which happens when $\operatorname*{char}F=2$). Hence, the entire
product $\prod_{\alpha\in F\setminus\left\{  0\right\}  }\alpha$ simplifies to
$-1$.}. Moreover, $\left\vert F\setminus\left\{  0\right\}  \right\vert =q-1$
(since $\left\vert F\right\vert =q$), so that $v^{\left\vert F\setminus
\left\{  0\right\}  \right\vert }=v^{q-1}$. Hence,%
\[
\prod_{\alpha\in F\setminus\left\{  0\right\}  }\left(  \alpha v\right)
=\underbrace{\left(  \prod_{\alpha\in F\setminus\left\{  0\right\}  }%
\alpha\right)  }_{=-1}\underbrace{v^{\left\vert F\setminus\left\{  0\right\}
\right\vert }}_{=v^{q-1}}=-v^{q-1}.
\]
Since we previously saw that the product of the nonzero vectors in
$\operatorname*{span}\left(  v\right)  $ is $\prod_{\alpha\in F\setminus
\left\{  0\right\}  }\left(  \alpha v\right)  $, we thus conclude that this
product is $-v^{q-1}$. In other words, $\pi\left(  \operatorname*{span}\left(
v\right)  \right)  =-v^{q-1}$. This proves Lemma \ref{lem.pispanv}.
\end{proof}

The next three lemmas are staples in combinatorial number theory:

\begin{lemma}
\label{lem.zerosum-F}Let $i\in\mathbb{N}$ be such that $i<q-1$. Then,
$\sum_{\alpha\in F}\alpha^{i}=0$.
\end{lemma}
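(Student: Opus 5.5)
We argue via the multiplicative group $F^{\times}:=F\setminus\left\{ 0\right\}$, which is cyclic of order $q-1$. First we dispose of $i=0$ separately. In that case $\sum_{\alpha\in F}\alpha^{0}=\left\vert F\right\vert =q$, with the convention $0^{0}=1$, and $q=0$ in $F$. So assume $1\leq i<q-1$. Then the summand for $\alpha=0$ is $0^{i}=0$, and it suffices to show $\sum_{\alpha\in F^{\times}}\alpha^{i}=0$.

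For this we use the standard shifting trick. Pick an element $\gamma\in F^{\times}$ with $\gamma^{i}\neq1$. Since $\alpha\mapsto\gamma\alpha$ is a bijection $F^{\times}\rightarrow F^{\times}$, reindexing gives
\[
\sum_{\alpha\in F^{\times}}\alpha^{i}=\sum_{\alpha\in F^{\times}}\left(  \gamma\alpha\right)  ^{i}=\gamma^{i}\sum_{\alpha\in F^{\times}}\alpha^{i}.
\]
Hence $\left(  1-\gamma^{i}\right)  \sum_{\alpha\in F^{\times}}\alpha^{i}=0$. Because $1-\gamma^{i}\neq0$ in the field $F$, we may cancel it, and the sum vanishes.

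The only real step is producing $\gamma$. One option is a generator $\gamma$ of the cyclic group $F^{\times}$: it has order $q-1$, and $0<i<q-1$, so $\gamma^{i}\neq1$. If we prefer not to invoke cyclicity, a counting argument also works. The polynomial $t^{i}-1$ has at most $i<q-1=\left\vert F^{\times}\right\vert$ roots in $F$, so some $\gamma\in F^{\times}$ is not a root, i.e., $\gamma^{i}\neq1$. I would use this root-counting version, since it is self-contained.
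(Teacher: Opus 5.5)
Your proof is correct and takes essentially the paper's approach. The paper likewise handles $i=0$ separately and uses root counting to find an element $u$ with $u\neq0$ and $u^{i}\neq1$, then applies the rescaling $\alpha\mapsto u\alpha$. The only differences are that it counts roots of $x^{i+1}-x$ and rescales over all of $F$, whereas you count roots of $t^{i}-1$ and rescale over $F^{\times}$.
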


\begin{proof}
Well-known, but let us give a proof nevertheless. If $i=0$, then $\sum
_{\alpha\in F}\underbrace{\alpha^{i}}_{=\alpha^{0}=1}=\sum_{\alpha\in
F}1=\left\vert F\right\vert =q=0$ in $F$. Thus, we WLOG assume that $i>0$.
Hence, the polynomial $x^{i+1}-x\in F\left[  x\right]  $ is a nonzero
polynomial of degree $i+1$. Therefore, this polynomial has at most $i+1$ roots
in $F$. Since $i+1<q$ (because $i<q-1$), this shows that this polynomial has
fewer roots in $F$ than $F$ has elements. Thus, there exists some $u\in F$
that is not a root of this polynomial. Consider this $u$. Then, $u^{i+1}%
-u\neq0$, so that $u\left(  u^{i}-1\right)  =u^{i+1}-u\neq0$. Thus, $u\neq0$
and $u^{i}-1\neq0$.

Now, $u\neq0$ shows that $u$ is invertible in the field $F$, and thus the map
$F\rightarrow F,\ \alpha\mapsto u\alpha$ is a bijection. Hence, substituting
$u\alpha$ for $\alpha$ in the sum $\sum_{\alpha\in F}\alpha^{i}$, we obtain
$\sum_{\alpha\in F}\alpha^{i}=\sum_{\alpha\in F}\underbrace{\left(
u\alpha\right)  ^{i}}_{=u^{i}\alpha^{i}}=u^{i}\sum_{\alpha\in F}\alpha^{i}$.
Therefore,%
\[
\left(  u^{i}-1\right)  \sum_{\alpha\in F}\alpha^{i}=u^{i}\sum_{\alpha\in
F}\alpha^{i}-\sum_{\alpha\in F}\alpha^{i}=0
\]
(since $\sum_{\alpha\in F}\alpha^{i}=u^{i}\sum_{\alpha\in F}\alpha^{i}$). We
can divide this by $u^{i}-1$ (since $u^{i}-1\neq0$), and obtain $\sum
_{\alpha\in F}\alpha^{i}=0$. This proves Lemma \ref{lem.zerosum-F}.
\end{proof}

\begin{lemma}
\label{lem.cw-lema}Let $P\in\mathbf{A}\left[  t_{1},t_{2},\ldots,t_{n}\right]
$ be a polynomial of total degree $<n\left(  q-1\right)  $ in $n$ variables
$t_{1},t_{2},\ldots,t_{n}$ over $\mathbf{A}$. Then,%
\[
\sum_{\alpha_{1},\alpha_{2},\ldots,\alpha_{n}\in F}P\left(  \alpha_{1}%
,\alpha_{2},\ldots,\alpha_{n}\right)  =0.
\]

\end{lemma}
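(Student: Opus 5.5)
By linearity of the sum in $P$, it suffices to prove the claim when $P$ is a single monomial $a\,t_1^{i_1}t_2^{i_2}\cdots t_n^{i_n}$ with $a\in\mathbf{A}$ and $i_1+i_2+\cdots+i_n<n\left(q-1\right)$. Indeed, $P$ is an $\mathbf{A}$-linear combination of such monomials, since its total degree is $<n\left(q-1\right)$. The sum $\sum_{\alpha_1,\ldots,\alpha_n\in F}P\left(\alpha_1,\ldots,\alpha_n\right)$ depends $\mathbf{A}$-linearly on $P$, so vanishing for each monomial gives vanishing for $P$.

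For a monomial, the sum factors as a product of one-variable sums:
\[
\sum_{\alpha_{1},\ldots,\alpha_{n}\in F}a\,\alpha_{1}^{i_{1}}\cdots\alpha_{n}^{i_{n}}=a\prod_{k=1}^{n}\left(\sum_{\alpha\in F}\alpha^{i_{k}}\right),
\]
where each factor $\sum_{\alpha\in F}\alpha^{i_k}$ lies in $F$ and is viewed inside $\mathbf{A}$ via the $F$-algebra structure. Since $i_1+\cdots+i_n<n\left(q-1\right)$, the pigeonhole principle gives an index $k$ with $i_k<q-1$. Lemma \ref{lem.zerosum-F} then says $\sum_{\alpha\in F}\alpha^{i_k}=0$, so the product vanishes, and so does the sum for this monomial.

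There is no real obstacle. The only point needing care is the convention $0^0=1$ in the $i_k=0$ case, which Lemma \ref{lem.zerosum-F} already covers (there the sum equals $q=0$). The remaining checks are routine: the factorization of the sum over $F^n$ into a product of sums, and the fact that evaluating $P$ at $F$-points commutes with the $\mathbf{A}$-linear decomposition into monomials.
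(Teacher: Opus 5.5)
Your proposal is correct and follows essentially the same route as the paper: reduce to monomials by $\mathbf{A}$-linearity, use the pigeonhole principle to find an exponent $i_k<q-1$, factor the sum over $F^n$ into a product of one-variable sums, and kill that factor with Lemma \ref{lem.zerosum-F}. The only cosmetic difference is that you carry an explicit coefficient $a$ in the monomial, while the paper absorbs it into the linearity reduction.
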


\begin{proof}
The claim depends $\mathbf{A}$-linearly on $P$. Thus, we can WLOG assume that
$P$ is just a monomial $t_{1}^{i_{1}}t_{2}^{i_{2}}\cdots t_{n}^{i_{n}}$ with
$i_{1}+i_{2}+\cdots+i_{n}<n\left(  q-1\right)  $ (since any polynomial of
total degree $<n\left(  q-1\right)  $ in $t_{1},t_{2},\ldots,t_{n}$ is an
$\mathbf{A}$-linear combination of such monomials). Assume this, and consider
these exponents $i_{1},i_{2},\ldots,i_{n}$. Then, at least one $k\in\left[
n\right]  $ satisfies $i_{k}<q-1$ (since $i_{1}+i_{2}+\cdots+i_{n}<n\left(
q-1\right)  $) and therefore $\sum_{\alpha_{k}\in F}\alpha_{k}^{i_{k}}%
=\sum_{\alpha\in F}\alpha^{i_{k}}=0$ (by Lemma \ref{lem.zerosum-F}, applied to
$i=i_{k}$). But $P=t_{1}^{i_{1}}t_{2}^{i_{2}}\cdots t_{n}^{i_{n}}$, and
therefore%
\[
\sum_{\alpha_{1},\alpha_{2},\ldots,\alpha_{n}\in F}P\left(  \alpha_{1}%
,\alpha_{2},\ldots,\alpha_{n}\right)  =\sum_{\alpha_{1},\alpha_{2}%
,\ldots,\alpha_{n}\in F}\alpha_{1}^{i_{1}}\alpha_{2}^{i_{2}}\cdots\alpha
_{n}^{i_{n}}=\prod_{j=1}^{n}\ \ \underbrace{\sum_{\alpha_{j}\in F}\alpha
_{j}^{i_{j}}}_{\substack{=0\text{ for }j=k\\\text{(since }\sum_{\alpha_{k}\in
F}\alpha_{k}^{i_{k}}=0\text{)}}}=0
\]
(since a product is $0$ if at least one of its factors is $0$). This proves
the lemma.
\end{proof}

\begin{lemma}
\label{lem.zerosum}Let $a_{1},a_{2},\ldots,a_{k}$ be $k$ nonnegative integers,
where $k>0$. Let $V$ be an $n$-dimensional $F$-vector subspace of $\mathbf{A}%
$, where $n>k$. Then,%
\[
\sum_{w\in V\setminus\left\{  0\right\}  }w^{\left(  q-1\right)  \left(
q^{a_{1}}+q^{a_{2}}+\cdots+q^{a_{k}}\right)  }=0.
\]

\end{lemma}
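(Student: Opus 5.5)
Since $0^{(q-1)(q^{a_1}+\cdots+q^{a_k})}=0$ (the exponent is positive because $k>0$), the sum over $V\setminus\left\{0\right\}$ equals the sum over all of $V$. So it suffices to show
\[
\sum_{w\in V} w^{(q-1)(q^{a_1}+\cdots+q^{a_k})}=0.
\]

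Fix a basis $(x_1,x_2,\ldots,x_n)$ of $V$. Then each $w\in V$ is $w=\alpha_1x_1+\cdots+\alpha_nx_n$ for a unique tuple $(\alpha_1,\ldots,\alpha_n)\in F^n$. Hence the sum equals $\sum_{\alpha_1,\ldots,\alpha_n\in F}P(\alpha_1,\ldots,\alpha_n)$, where
\[
P(t_1,\ldots,t_n):=\ell^{(q-1)(q^{a_1}+\cdots+q^{a_k})}\in\mathbf{A}[t_1,\ldots,t_n],\qquad \ell:=t_1x_1+\cdots+t_nx_n .
\]
Since $\ell$ is homogeneous of degree $1$, this $P$ has total degree $(q-1)(q^{a_1}+\cdots+q^{a_k})$, which is typically far larger than $n(q-1)$. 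So Lemma \ref{lem.cw-lema} cannot be applied directly, and the key is to reduce the degree first.

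We use the Frobenius. Since $\varphi$ is a ring endomorphism, $\ell^{q^{a}}=\sum_i t_i^{q^{a}}x_i^{q^{a}}$ in the polynomial ring. When we evaluate at $t_i=\alpha_i\in F$, we have $\alpha_i^{q^a}=\alpha_i$, so $\ell^{q^a}$ takes the same values on $F^n$ as the linear form $\ell_a:=\sum_i t_ix_i^{q^a}$. Therefore, as functions on $F^n$,
\[
P=\prod_{j=1}^k \ell^{q^{a_j}(q-1)} \quad\text{agrees with}\quad Q:=\prod_{j=1}^k \ell_{a_j}^{\,q-1}.
\]
This $Q$ is a polynomial of total degree $k(q-1)<n(q-1)$, because $n>k$. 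The sum of $P$ over $F^n$ equals the sum of $Q$ over $F^n$, and Lemma \ref{lem.cw-lema} applied to $Q$ gives $0$.

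The main point needing care is this replacement step. It uses the identity $(\sum_i t_ix_i)^{q^a}=\sum_i t_i^{q^a}x_i^{q^a}$, which holds because the $q^a$-th power map is additive in characteristic $p$. It also uses $\alpha^{q^a}=\alpha$ for $\alpha\in F$; the argument needs only pointwise equality on $F^n$, not equality as polynomials. Everything else is bookkeeping, including the earlier removal of $w=0$, which is where $k>0$ is used.
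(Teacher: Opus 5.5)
Your proof is correct and follows the paper's argument. You drop $w=0$, expand $w$ in a basis, use the $F$-linearity of $v\mapsto v^{q^{a_j}}$ to rewrite each factor as $\bigl(\sum_i\alpha_i x_i^{q^{a_j}}\bigr)^{q-1}$, and apply Lemma \ref{lem.cw-lema} to a polynomial of total degree $k(q-1)<n(q-1)$. The only cosmetic difference is that the paper applies the Frobenius identity directly to $w=\sum_i\alpha_i x_i\in\mathbf{A}$, whereas you first work in $\mathbf{A}[t_1,\ldots,t_n]$ and then argue by pointwise agreement on $F^n$.
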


\begin{proof}
Since $k>0$, we have $\left(  q-1\right)  \left(  q^{a_{1}}+q^{a_{2}}%
+\cdots+q^{a_{k}}\right)  >0$ and thus%
\[
0^{\left(  q-1\right)  \left(  q^{a_{1}}+q^{a_{2}}+\cdots+q^{a_{k}}\right)
}=0.
\]
Hence, we can replace the $\sum_{w\in V\setminus\left\{  0\right\}  }$ sign in
Lemma \ref{lem.zerosum} by a $\sum_{w\in V}$ sign without changing the sum. It
thus remains to prove
\begin{equation}
\sum_{w\in V}w^{\left(  q-1\right)  \left(  q^{a_{1}}+q^{a_{2}}+\cdots
+q^{a_{k}}\right)  }=0. \label{pf.lem.zerosum.1}%
\end{equation}

For each $a\in\mathbb{N}$, the map
\begin{align*}
\mathbf{A}  &  \rightarrow\mathbf{A},\\
v  &  \mapsto v^{q^{a}}%
\end{align*}
is an $F$-algebra endomorphism of $\mathbf{A}$ (indeed, it is the $a$-th power
of the Frobenius morphism $\varphi$ of $\mathbf{A}$). Thus, for each
$a\in\mathbb{N}$ and any $\alpha_{1},\alpha_{2},\ldots,\alpha_{n}\in F$ and
$x_{1},x_{2},\ldots,x_{n}\in\mathbf{A}$, we have%
\begin{align}
&  \left(  \alpha_{1}x_{1}+\alpha_{2}x_{2}+\cdots+\alpha_{n}x_{n}\right)
^{q^{a}}\nonumber\\
&  =\alpha_{1}x_{1}^{q^{a}}+\alpha_{2}x_{2}^{q^{a}}+\cdots+\alpha_{n}%
x_{n}^{q^{a}}. \label{pf.lem.zerosum.frobeq}%
\end{align}

Now, fix a basis $\left(  x_{1},x_{2},\ldots,x_{n}\right)  $ of the $F$-vector
space $V$. Then, each $w\in V$ can be uniquely written as $\alpha_{1}%
x_{1}+\alpha_{2}x_{2}+\cdots+\alpha_{n}x_{n}$ with $\alpha_{1},\alpha
_{2},\ldots,\alpha_{n}\in F$. Hence,%
\begin{align*}
&  \sum_{w\in V}w^{\left(  q-1\right)  \left(  q^{a_{1}}+q^{a_{2}}%
+\cdots+q^{a_{k}}\right)  }\\
&  =\sum_{\alpha_{1},\alpha_{2},\ldots,\alpha_{n}\in F}\underbrace{\left(
\alpha_{1}x_{1}+\alpha_{2}x_{2}+\cdots+\alpha_{n}x_{n}\right)  ^{\left(
q-1\right)  \left(  q^{a_{1}}+q^{a_{2}}+\cdots+q^{a_{k}}\right)  }}%
_{=\prod_{j=1}^{k}\left(  \alpha_{1}x_{1}+\alpha_{2}x_{2}+\cdots+\alpha
_{n}x_{n}\right)  ^{\left(  q-1\right)  q^{a_{j}}}}\\
&  =\sum_{\alpha_{1},\alpha_{2},\ldots,\alpha_{n}\in F}\ \ \prod_{j=1}%
^{k}\underbrace{\left(  \alpha_{1}x_{1}+\alpha_{2}x_{2}+\cdots+\alpha_{n}%
x_{n}\right)  ^{\left(  q-1\right)  q^{a_{j}}}}_{=\left(  \left(  \alpha
_{1}x_{1}+\alpha_{2}x_{2}+\cdots+\alpha_{n}x_{n}\right)  ^{q^{a_{j}}}\right)
^{q-1}}\\
&  =\sum_{\alpha_{1},\alpha_{2},\ldots,\alpha_{n}\in F}\ \ \prod_{j=1}%
^{k}\left(  \underbrace{\left(  \alpha_{1}x_{1}+\alpha_{2}x_{2}+\cdots
+\alpha_{n}x_{n}\right)  ^{q^{a_{j}}}}_{\substack{=\alpha_{1}x_{1}^{q^{a_{j}}%
}+\alpha_{2}x_{2}^{q^{a_{j}}}+\cdots+\alpha_{n}x_{n}^{q^{a_{j}}}\\\text{(by
(\ref{pf.lem.zerosum.frobeq}), applied to }a=a_{j}\text{)}}}\right)  ^{q-1}\\
&  =\sum_{\alpha_{1},\alpha_{2},\ldots,\alpha_{n}\in F}\ \ \prod_{j=1}%
^{k}\left(  \alpha_{1}x_{1}^{q^{a_{j}}}+\alpha_{2}x_{2}^{q^{a_{j}}}%
+\cdots+\alpha_{n}x_{n}^{q^{a_{j}}}\right)  ^{q-1}=0
\end{align*}
(by Lemma \ref{lem.cw-lema}, applied to the polynomial $P=\prod_{j=1}%
^{k}\left(  t_{1}x_{1}^{q^{a_{j}}}+t_{2}x_{2}^{q^{a_{j}}}+\cdots+t_{n}%
x_{n}^{q^{a_{j}}}\right)  ^{q-1}\in\mathbf{A}\left[  t_{1},t_{2},\ldots
,t_{n}\right]  $, which has total degree $\underbrace{k}_{<n}\left(
q-1\right)  <n\left(  q-1\right)  $). This proves (\ref{pf.lem.zerosum.1}) and
thus the lemma.
\end{proof}

The following purely combinatorial lemma will help us simplify a determinant:

\begin{lemma}
\label{lem.perm}Let $n\in\mathbb{N}$. Let $\alpha_{1}>\alpha_{2}>\cdots
>\alpha_{n}$ be $n$ integers, and let $\beta_{1}>\beta_{2}>\cdots>\beta_{n}$
be $n$ further integers. Assume that%
\begin{equation}
\alpha_{i}-\beta_{i}\in\left\{  0,1\right\}  \ \ \ \ \ \ \ \ \ \ \text{for all
}i\in\left[  n\right]  . \label{eq.lem.perm.ass}%
\end{equation}
Let $\sigma\in S_{n}$ be a permutation such that $\sigma\neq\operatorname*{id}%
$. Then, there exists an $i\in\left[  n\right]  $ such that $\alpha_{i}%
-\beta_{\sigma\left(  i\right)  }\notin\left\{  0,1\right\}  $.
\end{lemma}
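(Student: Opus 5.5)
We argue by contradiction: assume $\alpha_i-\beta_{\sigma(i)}\in\{0,1\}$ for all $i\in[n]$, and deduce $\sigma=\operatorname{id}$. The idea is to show that $\sigma(i)\leq i$ and $\sigma(i)\geq i$ must both hold, i.e., that $\sigma$ can move no index forward or backward. The tools are the strict monotonicity of the two sequences together with hypothesis (\ref{eq.lem.perm.ass}).

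First, suppose some $i$ has $\sigma(i)>i$. Strict decrease of integers gives $\beta_{\sigma(i)}\leq\beta_{i+1}\leq\beta_i-1$ (so this step needs $i+1\leq n$, which holds because $\sigma(i)>i$). Hypothesis (\ref{eq.lem.perm.ass}) gives $\beta_i\geq\alpha_i-1$. Combining, $\beta_{\sigma(i)}\leq\alpha_i-2$, so $\alpha_i-\beta_{\sigma(i)}\geq2$, contradicting our assumption. Hence $\sigma(i)\leq i$ for every $i\in[n]$.

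Second, since $\sigma$ is a bijection of $[n]$ with $\sigma(i)\leq i$ for all $i$, a standard induction on $i$ yields $\sigma(i)=i$ for all $i$. Indeed, $\sigma(1)\leq1$ forces $\sigma(1)=1$. If $\sigma(j)=j$ for all $j<i$, then $\sigma(i)\leq i$ and injectivity exclude the values $1,\ldots,i-1$, so $\sigma(i)=i$. Thus $\sigma=\operatorname{id}$, contradicting $\sigma\neq\operatorname{id}$. This proves the lemma.

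Alternatively, the second step can be replaced by a symmetric argument: if $\sigma(i)<i$, then $\beta_{\sigma(i)}\geq\beta_i+1\geq\alpha_i$, with equality in both places forced. That route is less clean, since $\alpha_i-\beta_{\sigma(i)}=0$ is allowed and gives no immediate contradiction, so the bijection argument above is the one to use.

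The only real care point is the inequality chain in the first step, specifically that strictness of the $\beta$'s gives a gap of at least one and the hypothesis contributes at most one more. Everything else is routine.
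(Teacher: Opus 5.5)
Your first step has a direction error, and the proof does not survive it. From $\alpha_i-\beta_i\in\{0,1\}$ you get both $\beta_i\leq\alpha_i$ and $\beta_i\geq\alpha_i-1$. Only the upper bound can be chained with $\beta_{\sigma(i)}\leq\beta_{i+1}\leq\beta_i-1$. It yields $\beta_{\sigma(i)}\leq\alpha_i-1$, i.e.\ merely $\alpha_i-\beta_{\sigma(i)}\geq1$, which is allowed. The lower bound $\beta_i\geq\alpha_i-1$ gives no upper bound on $\beta_{\sigma(i)}$; to reach $\alpha_i-2$ you would need $\alpha_i-\beta_i=1$, which is not given.

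So $\sigma(i)>i$ is not contradictory at a single index. For example, take $n=2$, $\alpha=\beta=(1,0)$ and $\sigma$ the transposition. Then $i=1$ has $\sigma(1)=2$ and $\alpha_1-\beta_{\sigma(1)}=1$. Hence ``$\sigma(i)\leq i$ for all $i$'' is not established, and your second step has nothing to stand on. A quick diagnostic that something had to be wrong: your argument never uses $\alpha_1>\alpha_2>\cdots>\alpha_n$, yet the lemma fails without it. With $\alpha=(1,1)$, $\beta=(1,0)$ and $\sigma$ the transposition, the differences are $1$ and $0$.

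The missing idea is that no single index gives a contradiction in either direction. You noticed this for $\sigma(i)<i$, which only forces $\sigma(i)=i-1$ and $\alpha_i=\beta_i+1=\beta_{i-1}$. Symmetrically, $\sigma(i)>i$ only forces $\sigma(i)=i+1$ and $\alpha_i=\beta_i$. You must combine two indices and use strictness of the $\alpha$'s.

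The paper does this by taking the smallest $k$ with $\sigma(k)\neq k$. Then $\sigma(k)>k$, and the correct bound $\alpha_k-\beta_{\sigma(k)}\geq(\alpha_k-\beta_k)+1$ forces $\alpha_k=\beta_k$ if that difference lies in $\{0,1\}$. Next, $j:=\sigma^{-1}(k)$ satisfies $j>k$, so $\alpha_j<\alpha_k=\beta_k=\beta_{\sigma(j)}$ and $\alpha_j-\beta_{\sigma(j)}<0$.

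Alternatively, you can finish from your two local equality cases. They give $|\sigma(i)-i|\leq1$ for all $i$. At the smallest non-fixed point $k$, $\sigma$ must therefore swap $k$ and $k+1$. That forces $\alpha_k=\beta_k$ and $\alpha_{k+1}=\beta_{k+1}+1=\beta_k$, contradicting $\alpha_k>\alpha_{k+1}$.
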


\begin{proof}
We have assumed that $\sigma\neq\operatorname*{id}$. Thus, there exists some
$k\in\left[  n\right]  $ such that $\sigma\left(  k\right)  \neq k$. Consider
the \textbf{smallest} such $k$. Then,%
\begin{equation}
\sigma\left(  j\right)  =j\ \ \ \ \ \ \ \ \ \ \text{for each }j<k
\label{pf.lem.perm.min}%
\end{equation}
(since our $k$ is the \textbf{smallest} $k$). Hence, if we had $\sigma\left(
k\right)  <k$, then we would have $\sigma\left(  \sigma\left(  k\right)
\right)  =\sigma\left(  k\right)  $ (by (\ref{pf.lem.perm.min}), applied to
$j=\sigma\left(  k\right)  $), therefore $\sigma\left(  k\right)  =k$ (because
$\sigma$ is injective); but this would contradict $\sigma\left(  k\right)
\neq k$. Hence, we cannot have $\sigma\left(  k\right)  <k$. Thus, we have
$\sigma\left(  k\right)  \geq k$ and therefore $\sigma\left(  k\right)  >k$
(since $\sigma\left(  k\right)  \neq k$). That is, $\sigma\left(  k\right)
\geq k+1$. Hence, $k+1\leq\sigma\left(  k\right)  \leq n$, so that
$k+1\in\left[  n\right]  $. Thus, from $\beta_{1}>\beta_{2}>\cdots>\beta_{n}$,
we obtain $\beta_{\sigma\left(  k\right)  }\leq\beta_{k+1}$ (since
$k+1\leq\sigma\left(  k\right)  $) and $\beta_{k+1}<\beta_{k}$. Also,
$\alpha_{k}>\alpha_{k+1}$ (which follows from $\alpha_{1}>\alpha_{2}%
>\cdots>\alpha_{n}$).

We want to find an $i\in\left[  n\right]  $ such that $\alpha_{i}%
-\beta_{\sigma\left(  i\right)  }\notin\left\{  0,1\right\}  $. If $\alpha
_{k}-\beta_{\sigma\left(  k\right)  }\notin\left\{  0,1\right\}  $, then we
can just take $i=k$ and be done. Thus, we WLOG assume that $\alpha_{k}%
-\beta_{\sigma\left(  k\right)  }\in\left\{  0,1\right\}  $. Thus,
$0\leq\alpha_{k}-\beta_{\sigma\left(  k\right)  }\leq1$. In other words,
\begin{equation}
\beta_{\sigma\left(  k\right)  }\leq\alpha_{k}\leq\beta_{\sigma\left(
k\right)  }+1. \label{pf.lem.perm.4}%
\end{equation}

But (\ref{eq.lem.perm.ass}) yields $\alpha_{k}-\beta_{k}\in\left\{
0,1\right\}  $. Hence, $0\leq\alpha_{k}-\beta_{k}\leq1$. In other words,
\[
\beta_{k}\leq\alpha_{k}\leq\beta_{k}+1.
\]
The same argument (applied to $k+1$ instead of $k$) yields%
\[
\beta_{k+1}\leq\alpha_{k+1}\leq\beta_{k+1}+1.
\]

If we had $\sigma\left(  k\right)  >k+1$, then we would have $\beta
_{\sigma\left(  k\right)  }<\beta_{k+1}$ (since $\beta_{1}>\beta_{2}%
>\cdots>\beta_{n}$) and therefore $\beta_{\sigma\left(  k\right)  }+1\leq
\beta_{k+1}$, which would entail $\alpha_{k}\leq\beta_{\sigma\left(  k\right)
}+1\leq\beta_{k+1}\leq\alpha_{k+1}$; but this would contradict $\alpha
_{k}>\alpha_{k+1}$. Hence, we cannot have $\sigma\left(  k\right)  >k+1$.
Thus, we have $\sigma\left(  k\right)  \leq k+1$. Combined with $\sigma\left(
k\right)  \geq k+1$, this leads to $\sigma\left(  k\right)  =k+1$. Therefore,
we can rewrite the chain of inequalities (\ref{pf.lem.perm.4}) as%
\[
\beta_{k+1}\leq\alpha_{k}\leq\beta_{k+1}+1.
\]
Thus, $\alpha_{k}\leq\beta_{k+1}+1\leq\beta_{k}$ (since $\beta_{k+1}<\beta
_{k}$). Combining this with $\beta_{k}\leq\alpha_{k}$, we obtain
\[
\alpha_{k}=\beta_{k}.
\]

Let $j:=\sigma^{-1}\left(  k\right)  $. Then, $j\in\left[  n\right]  $ and
$\sigma\left(  j\right)  =k$. If we had $j<k$, then we would have
$\sigma\left(  j\right)  =j$ (by (\ref{pf.lem.perm.min})) and therefore
$k=\sigma\left(  j\right)  =j<k$, which is a contradiction. Thus, we must have
$j\geq k$. Since $\sigma\left(  j\right)  =k\neq\sigma\left(  k\right)  $, we
furthermore have $j\neq k$, and thus $j>k$ (since $j\geq k$). Hence,
$\alpha_{j}<\alpha_{k}$ (since $\alpha_{1}>\alpha_{2}>\cdots>\alpha_{n}$).
Thus, $\alpha_{j}<\alpha_{k}=\beta_{k}=\beta_{\sigma\left(  j\right)  }$
(since $k=\sigma\left(  j\right)  $). Therefore, $\alpha_{j}-\beta
_{\sigma\left(  j\right)  }<0$, so that $\alpha_{j}-\beta_{\sigma\left(
j\right)  }\notin\left\{  0,1\right\}  $. Thus, there exists an $i\in\left[
n\right]  $ such that $\alpha_{i}-\beta_{\sigma\left(  i\right)  }%
\notin\left\{  0,1\right\}  $ (namely, $i=j$). This completes the proof of
Lemma \ref{lem.perm}.
\end{proof}

\subsection{The $\protect\widetilde{S}_{\lambda/\mu}$}

We now come back to the properties of the 7th Variation. More precisely, we
define a variant of it (a 7.5th Variation perhaps) in the skew case.

\begin{convention}
For this whole section, we assume that the Frobenius morphism $\varphi
:\mathbf{A}\rightarrow\mathbf{A}$ is invertible.
\end{convention}

For any two partitions $\lambda$ and $\mu$, we define%
\begin{equation}
\widetilde{S}_{\lambda/\mu}\left(  U\right)  :=\det\left(  \varphi
^{\lambda_{i}-i}E_{\lambda_{i}-\mu_{j}-i+j}\left(  U\right)  \right)
_{i,j\in\left[  r\right]  }\in\mathbf{A}, \label{eq.Stil=}%
\end{equation}
where $r\in\mathbb{N}$ is chosen such that both $\ell\left(  \lambda\right)  $
and $\ell\left(  \mu\right)  $ are $\leq r$ (the exact choice of $r$ is
immaterial, because if $\lambda_{r}=\mu_{r}=0$, then the $r$-th row of the
matrix $\left(  \varphi^{\lambda_{i}-i}E_{\lambda_{i}-\mu_{j}-i+j}\left(
U\right)  \right)  _{i,j\in\left[  r\right]  }$ is $\left(  0,0,\ldots
,0,1\right)  $, and thus Laplace expansion along this row shows that
\[
\det\left(  \varphi^{\lambda_{i}-i}E_{\lambda_{i}-\mu_{j}-i+j}\left(
U\right)  \right)  _{i,j\in\left[  r\right]  }=\det\left(  \varphi
^{\lambda_{i}-i}E_{\lambda_{i}-\mu_{j}-i+j}\left(  U\right)  \right)
_{i,j\in\left[  r-1\right]  }%
\]
in this case). This is similar to the formula \cite[(7.11')]{Macdon92} for
$S_{\lambda^{\prime}/\mu^{\prime}}\left(  U\right)  $, but probably not
directly related. However, it has some properties in common. First, we have an
analogue of \cite[(7.12)]{Macdon92}:

\begin{lemma}
\label{lem.Stil.0}Let $\lambda$ and $\mu$ be two partitions. Let $U$ be an
$n$-dimensional $F$-vector subspace of $\mathbf{A}$. Then,
\begin{equation}
\widetilde{S}_{\lambda/\mu}\left(  U\right)  =0\text{ unless }0\leq\lambda
_{i}-\mu_{i}\leq n\text{ for all }i\geq1. \label{eq.Stilde.0}%
\end{equation}

\end{lemma}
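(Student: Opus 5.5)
We must show that if some $i$ has $\lambda_i-\mu_i<0$ or $\lambda_i-\mu_i>n$, then the determinant in (\ref{eq.Stil=}) vanishes. Write $M=(m_{ij})_{i,j\in[r]}$ with $m_{ij}=\varphi^{\lambda_i-i}E_{\lambda_i-\mu_j-i+j}(U)$. The key facts are that $E_s(U)=0$ whenever $s<0$ (by definition) or $s>n$ (by (\ref{eq.Slam.SlamV0}), since $(1^s)$ then has length $>\dim U$). Moreover $\varphi^{\lambda_i-i}$ is additive and sends $0$ to $0$, so $m_{ij}=0$ whenever $\lambda_i-\mu_j-i+j\notin\{0,1,\ldots,n\}$. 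Hence the whole argument reduces to exhibiting a block of zeros in $M$ large enough to force $\det M=0$.

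\textbf{Case 1: $\lambda_k<\mu_k$ for some $k$.} Consider the rows $i\geq k$ and the columns $j\leq k$. For such $i,j$ we have $\lambda_i\le\lambda_k$ and $\mu_j\ge\mu_k$, so
\[
\lambda_i-\mu_j-i+j\le\lambda_k-\mu_k+(j-i)\le\lambda_k-\mu_k<0 .
\]
Thus $m_{ij}=0$ on the $(r-k+1)\times k$ block formed by rows $k,\ldots,r$ and columns $1,\ldots,k$. The $k$ columns $1,\ldots,k$ are then supported on only the $k-1$ rows $1,\ldots,k-1$, so they are linearly dependent over the fraction field (or, more directly, every term of the Leibniz expansion vanishes by pigeonhole, since $\sigma$ cannot map all of $\{1,\ldots,k\}$ into $\{1,\ldots,k-1\}$). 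Hence $\det M=0$. To stay within the commutative-ring setting without assuming anything about $\mathbf{A}$, I would phrase this through the Leibniz/pigeonhole argument: for each $\sigma\in S_r$, some $j\le k$ has $\sigma^{-1}(j)\ge k$, so the corresponding factor vanishes.

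\textbf{Case 2: $\lambda_k-\mu_k>n$ for some $k$.} Now take rows $i\le k$ and columns $j\ge k$. Here $\lambda_i\ge\lambda_k$, $\mu_j\le\mu_k$, and $j-i\ge0$, so
\[
\lambda_i-\mu_j-i+j\ge\lambda_k-\mu_k>n ,
\]
giving $m_{ij}=0$. The same pigeonhole argument applies: every $\sigma$ must send some $i\le k$ to some $\sigma(i)\ge k$, because the $k$ rows $1,\ldots,k$ cannot all map into the $k-1$ columns $1,\ldots,k-1$. So every Leibniz term vanishes.

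If $k>r$, then $\lambda_k=\mu_k=0$ and the condition holds trivially, so only $k\le r$ matters. The main point needing care is just confirming $E_s(U)=0$ for $s>n$ and for $s<0$; both come straight from the definitions, so I expect no real obstacle beyond the index bookkeeping.
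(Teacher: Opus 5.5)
Your proof is correct and takes essentially the same approach as the paper. The paper defers to Macdonald's template for (7.12)/(6.10), which is exactly your argument: $E_s(U)=0$ for $s\notin\{0,1,\ldots,n\}$ forces a $(r-k+1)\times k$ (resp.\ $k\times(r-k+1)$) block of zeros that kills the determinant, the same ``too-many-zeroes'' principle the paper uses in its appendix. Your Leibniz/pigeonhole phrasing rightly avoids any integral-domain assumption, and your remark that only $k\le r$ matters covers the remaining indices.
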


\begin{proof}
This is entirely analogous to \cite[(7.12)]{Macdon92}. (See \cite[errata,
proof of (6.10)]{Macdon92} for a proof template that can be used almost
verbatim here -- just replace $\lambda^{\prime}$ and $\mu^{\prime}$ by
$\lambda$ and $\mu$, and replace $e_{\lambda_{i}^{\prime}-\mu_{j}^{\prime
}-i+j}\left(  x\mid\tau^{-\mu_{j}^{\prime}+j-1}a\right)  $ by $\varphi
^{\lambda_{i}-i}E_{\lambda_{i}-\mu_{j}-i+j}\left(  U\right)  $.)
\end{proof}

As a particular case of Lemma \ref{lem.Stil.0}, we see that
\begin{equation}
\widetilde{S}_{\lambda/\mu}\left(  U\right)  =0\text{ unless }\mu
\subseteq\lambda. \label{eq.Stilde.0ifnotsub}%
\end{equation}

Next, we have an analogue of \cite[(7.22)]{Macdon92}:\footnote{Unlike
Macdonald, we use $\lambda/\mu$ (not $\lambda-\mu$) to denote the skew
partition consisting of two partitions $\mu$ and $\lambda$.
\par
We also use the standard notation $\lambda_{i}$ for the $i$-th entry of a
partition $\lambda$. (If $i$ surpasses the length of $\lambda$, then this
entry is $0$ by definition.)
\par
Recall that a skew partition $\lambda/\mu$ is said to be a \emph{vertical
strip} if its Young diagram has no two distinct cells in the same row. This is
equivalent to requiring that each $i\geq1$ satisfies $\mu_{i}\leq\lambda
_{i}\leq\mu_{i}+1$.}

\begin{lemma}
\label{lem.Stil.1}Let $\lambda$ and $\mu$ be two partitions. Let $U$ be a line
(i.e., a $1$-dimensional vector subspace) in $\mathbf{A}$. Recall the notation
$\pi\left(  U\right)  $ for the product of all nonzero vectors in $U$. Then:

\begin{enumerate}
\item[\textbf{(a)}] If $\lambda/\mu$ is a vertical strip, then we have%
\begin{equation}
\widetilde{S}_{\lambda/\mu}\left(  U\right)  =\prod_{\substack{i\geq
1;\\\lambda_{i}>\mu_{i}}}\varphi^{\lambda_{i}-i}\left(  -\pi\left(  U\right)
\right)  . \label{eq.Stilde.1-dim}%
\end{equation}

\item[\textbf{(b)}] Otherwise, we have%
\begin{equation}
\widetilde{S}_{\lambda/\mu}\left(  U\right)  =0. \label{eq.Stilde.1-dim.0}%
\end{equation}

\end{enumerate}
\end{lemma}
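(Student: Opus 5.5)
For a line $U=\operatorname*{span}\left(  v\right)  $, the plan is to first compute the $E_{r}\left(  U\right)  $ explicitly. We have $E_{0}\left(  U\right)  =1$ and $E_{r}\left(  U\right)  =0$ for $r<0$ and for $r>1=\dim U$. Also $E_{1}\left(  U\right)  =S_{\left(  1\right)  }\left(  v\right)  =A_{\left(  1\right)  }/A_{\left(  0\right)  }=v^{q}/v=v^{q-1}$, which equals $-\pi\left(  U\right)  $ by Lemma \ref{lem.pispanv}. (This holds universally as a polynomial identity, so no division issue arises.)

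Consequently, the $\left(  i,j\right)  $-entry of the matrix in (\ref{eq.Stil=}) is nonzero only when $\lambda_{i}-\mu_{j}-i+j\in\left\{  0,1\right\}  $. It equals $\varphi^{\lambda_{i}-i}\left(  1\right)  =1$ when this difference is $0$, and $\varphi^{\lambda_{i}-i}\left(  -\pi\left(  U\right)  \right)  $ when it is $1$.

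Set $\alpha_{i}:=\lambda_{i}-i$ and $\beta_{j}:=\mu_{j}-j$ for $i,j\in\left[  r\right]  $. Both sequences are strictly decreasing, and entry $\left(  i,j\right)  $ is nonzero only if $\alpha_{i}-\beta_{j}\in\left\{  0,1\right\}  $. Expand the determinant as the Leibniz sum over $\sigma\in S_{r}$; the term for $\sigma$ survives only if $\alpha_{i}-\beta_{\sigma\left(  i\right)  }\in\left\{  0,1\right\}  $ for all $i$.

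\textbf{Part (a).} If $\lambda/\mu$ is a vertical strip, then $\alpha_{i}-\beta_{i}=\lambda_{i}-\mu_{i}\in\left\{  0,1\right\}  $ for all $i$. So Lemma \ref{lem.perm} applies and kills every $\sigma\neq\operatorname*{id}$. Hence the determinant equals the diagonal product $\prod_{i}\varphi^{\lambda_{i}-i}E_{\lambda_{i}-\mu_{i}}\left(  U\right)  $. Factors with $\lambda_{i}=\mu_{i}$ equal $1$, and those with $\lambda_{i}=\mu_{i}+1$ equal $\varphi^{\lambda_{i}-i}\left(  -\pi\left(  U\right)  \right)  $, which gives (\ref{eq.Stilde.1-dim}). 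Indices $i>r$ contribute nothing, since $\lambda_{i}=\mu_{i}=0$ there.

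\textbf{Part (b).} If $\lambda/\mu$ is not a vertical strip, then some $i$ has $\lambda_{i}-\mu_{i}\notin\left\{  0,1\right\}  $, i.e.\ $\lambda_{i}-\mu_{i}<0$ or $>1=n$. Lemma \ref{lem.Stil.0} with $n=1$ then gives $\widetilde{S}_{\lambda/\mu}\left(  U\right)  =0$ directly.

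The only step needing any care is the evaluation $E_{1}\left(  U\right)  =-\pi\left(  U\right)  $ together with the observation that $\varphi$ of $1$ is $1$. The real combinatorial content, namely the vanishing of the off-diagonal permutations, is exactly Lemma \ref{lem.perm}, so I expect no serious obstacle.
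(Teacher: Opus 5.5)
Your proposal is correct and follows the paper's proof essentially step for step: part (b) comes from Lemma \ref{lem.Stil.0} with $n=1$, and part (a) from the Leibniz expansion, using Lemma \ref{lem.perm} to kill every $\sigma\neq\operatorname*{id}$ and then evaluating the diagonal product. The only (harmless) difference is that you get $E_{1}\left(U\right)=v^{q-1}=-\pi\left(U\right)$ directly from $A_{(1)}/A_{(0)}=x_{1}^{q}/x_{1}$ and Lemma \ref{lem.pispanv}, whereas the paper cites Macdonald's identity $\pi\left(U\right)=\left(-1\right)^{\dim U}E_{\dim U}\left(U\right)$.
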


\begin{proof}
\textbf{(b)} Assume that $\lambda/\mu$ is not a vertical strip. Then, some
$i\geq1$ satisfies $\lambda_{i}<\mu_{i}$ or $\lambda_{i}>\mu_{i}+1$. In other
words, some $i\geq1$ does \textbf{not} satisfy $0\leq\lambda_{i}-\mu_{i}\leq
1$. Hence, Lemma \ref{lem.Stil.0} (applied to $n=1$) shows that $\widetilde{S}%
_{\lambda/\mu}\left(  U\right)  =0$. This proves Lemma \ref{lem.Stil.1}
\textbf{(b)}. \medskip

\textbf{(a)} Let us first compute all the $E_{r}\left(  U\right)  $ for
$r\in\mathbb{Z}$:

\begin{itemize}
\item We have $E_{0}\left(  U\right)  =1$ (since each finite-dimensional
vector subspace $V$ of $\mathbf{A}$ satisfies $E_{0}\left(  V\right)
=S_{\left(  1^{0}\right)  }\left(  V\right)  =S_{\varnothing}\left(  V\right)
=1$).

\item From \cite[(7.20)]{Macdon92}, we know that $\pi\left(  U\right)
=\left(  -1\right)  ^{1}E_{1}\left(  U\right)  $ (since $\dim U=1$), and thus%
\[
E_{1}\left(  U\right)  =\left(  -1\right)  ^{1}\pi\left(  U\right)
=-\pi\left(  U\right)  .
\]

\item Recall that if $V$ is a finite-dimensional vector subspace of
$\mathbf{A}$, then $E_{r}\left(  V\right)  =0$ for all $r>\dim V$ and also for
all $r<0$. In other words, if $V$ is a finite-dimensional vector subspace of
$\mathbf{A}$, then%
\[
E_{r}\left(  V\right)  =0\ \ \ \ \ \ \ \ \ \ \text{for all }r\notin\left\{
0,1,\ldots,\dim V\right\}  .
\]
Applying this to $V=U$ (which has dimension $\dim U=1$), we conclude that
\begin{equation}
E_{r}\left(  U\right)  =0\ \ \ \ \ \ \ \ \ \ \text{for all }r\notin\left\{
0,1\right\}  . \label{eq.ErU=0}%
\end{equation}

\end{itemize}

Now assume that $\lambda/\mu$ is a vertical strip. Thus,%
\begin{equation}
\lambda_{i}-\mu_{i}\in\left\{  0,1\right\}  \ \ \ \ \ \ \ \ \ \ \text{for each
}i\geq1. \label{pf.eq.Stilde.1-dim.vs}%
\end{equation}

Let $n\in\mathbb{N}$ be such that both $\ell\left(  \lambda\right)  $ and
$\ell\left(  \mu\right)  $ are $\leq n$. Then, the definition of
$\widetilde{S}_{\lambda/\mu}\left(  U\right)  $ yields%
\begin{align}
\widetilde{S}_{\lambda/\mu}\left(  U\right)   &  =\det\left(  \varphi
^{\lambda_{i}-i}E_{\lambda_{i}-\mu_{j}-i+j}\left(  U\right)  \right)
_{i,j\in\left[  n\right]  }\nonumber\\
&  =\sum_{\sigma\in S_{n}}\left(  -1\right)  ^{\sigma}\prod_{i=1}^{n}%
\varphi^{\lambda_{i}-i}E_{\lambda_{i}-\mu_{\sigma\left(  i\right)  }%
-i+\sigma\left(  i\right)  }\left(  U\right)  \label{pf.eq.Stilde.1-dim.3}%
\end{align}
(by the definition of a determinant).

We shall now show that the only nonzero addend in the sum on the right hand
side is the addend for $\sigma=\operatorname*{id}$. Indeed, let $\sigma\in
S_{n}$ be a permutation such that $\sigma\neq\operatorname*{id}$. We have
$\lambda_{1}-1>\lambda_{2}-2>\cdots>\lambda_{n}-n$ (since $\lambda_{1}%
\geq\lambda_{2}\geq\cdots\geq\lambda_{n}$) and $\mu_{1}-1>\mu_{2}-2>\cdots
>\mu_{n}-n$ (similarly) and $\left(  \lambda_{i}-i\right)  -\left(  \mu
_{i}-i\right)  =\lambda_{i}-\mu_{i}\in\left\{  0,1\right\}  $ for each
$i\in\left[  n\right]  $ (by (\ref{pf.eq.Stilde.1-dim.vs})). Hence, Lemma
\ref{lem.perm} (applied to $\alpha_{i}=\lambda_{i}-i$ and $\beta_{i}=\mu
_{i}-i$) shows that there exists an $i\in\left[  n\right]  $ such that
$\left(  \lambda_{i}-i\right)  -\left(  \mu_{\sigma\left(  i\right)  }%
-\sigma\left(  i\right)  \right)  \notin\left\{  0,1\right\}  $. Consider this
$i$. Then,
\[
\lambda_{i}-\mu_{\sigma\left(  i\right)  }-i+\sigma\left(  i\right)  =\left(
\lambda_{i}-i\right)  -\left(  \mu_{\sigma\left(  i\right)  }-\sigma\left(
i\right)  \right)  \notin\left\{  0,1\right\}  .
\]
Thus, (\ref{eq.ErU=0}) yields $E_{\lambda_{i}-\mu_{\sigma\left(  i\right)
}-i+\sigma\left(  i\right)  }\left(  U\right)  =0$, so that $\varphi
^{\lambda_{i}-i}E_{\lambda_{i}-\mu_{\sigma\left(  i\right)  }-i+\sigma\left(
i\right)  }\left(  U\right)  =\varphi^{\lambda_{i}-i}0=0$ as well.

Thus, we have found an $i\in\left[  n\right]  $ such that $\varphi
^{\lambda_{i}-i}E_{\lambda_{i}-\mu_{\sigma\left(  i\right)  }-i+\sigma\left(
i\right)  }\left(  U\right)  =0$. In other words, the product $\prod_{i=1}%
^{n}\varphi^{\lambda_{i}-i}E_{\lambda_{i}-\mu_{\sigma\left(  i\right)
}-i+\sigma\left(  i\right)  }\left(  U\right)  $ has at least one factor equal
to $0$. Thus, this whole product is $0$.

Forget that we fixed $\sigma$. We thus have shown that if $\sigma\in S_{n}$ is
a permutation such that $\sigma\neq\operatorname*{id}$, then the product
$\prod_{i=1}^{n}\varphi^{\lambda_{i}-i}E_{\lambda_{i}-\mu_{\sigma\left(
i\right)  }-i+\sigma\left(  i\right)  }\left(  U\right)  $ is $0$.
Consequently, in the sum on the right hand side of (\ref{pf.eq.Stilde.1-dim.3}%
), all addends with $\sigma\neq\operatorname*{id}$ are $0$. Thus,
(\ref{pf.eq.Stilde.1-dim.3}) simplifies to%
\begin{align}
\widetilde{S}_{\lambda/\mu}\left(  U\right)   &  =\underbrace{\left(
-1\right)  ^{\operatorname*{id}}}_{=1}\prod_{i=1}^{n}\varphi^{\lambda_{i}%
-i}\underbrace{E_{\lambda_{i}-\mu_{\operatorname*{id}\left(  i\right)
}-i+\operatorname*{id}\left(  i\right)  }\left(  U\right)  }%
_{\substack{=E_{\lambda_{i}-\mu_{i}-i+i}\left(  U\right)  \\=E_{\lambda
_{i}-\mu_{i}}\left(  U\right)  }}=\prod_{i=1}^{n}\varphi^{\lambda_{i}%
-i}E_{\lambda_{i}-\mu_{i}}\left(  U\right) \nonumber\\
&  =\prod_{\substack{i\in\left[  n\right]  ;\\\lambda_{i}\neq\mu_{i}}%
}\varphi^{\lambda_{i}-i}E_{\lambda_{i}-\mu_{i}}\left(  U\right)
\label{pf.eq.Stilde.1-dim.7}%
\end{align}
(here, we have removed all the factors with $\lambda_{i}=\mu_{i}$ from the
product, since they all equal $\varphi^{\mu_{i}-i}\underbrace{E_{\mu_{i}%
-\mu_{i}}\left(  U\right)  }_{=E_{0}\left(  U\right)  =1}=\varphi^{\mu_{i}%
-i}1=1$).

But (\ref{pf.eq.Stilde.1-dim.vs}) shows that each $i\in\left[  n\right]  $
satisfies $\lambda_{i}-\mu_{i}\in\left\{  0,1\right\}  $, so that $\lambda
_{i}\geq\mu_{i}$. Hence, the condition $\lambda_{i}\neq\mu_{i}$ under the
product sign in (\ref{pf.eq.Stilde.1-dim.7}) is equivalent to $\lambda_{i}%
>\mu_{i}$. That is, we can rewrite (\ref{pf.eq.Stilde.1-dim.7}) as%
\begin{equation}
\widetilde{S}_{\lambda/\mu}\left(  U\right)  =\prod_{\substack{i\in\left[
n\right]  ;\\\lambda_{i}>\mu_{i}}}\varphi^{\lambda_{i}-i}E_{\lambda_{i}%
-\mu_{i}}\left(  U\right)  . \label{pf.eq.Stilde.1-dim.8}%
\end{equation}

Moreover, if $i\in\left[  n\right]  $ satisfies $\lambda_{i}>\mu_{i}$, then it
satisfies $\lambda_{i}-\mu_{i}=1$ (since (\ref{pf.eq.Stilde.1-dim.vs}) yields
$\lambda_{i}-\mu_{i}\in\left\{  0,1\right\}  $, but $\lambda_{i}-\mu_{i}$
cannot be $0$ because of $\lambda_{i}>\mu_{i}$) and thus $E_{\lambda_{i}%
-\mu_{i}}\left(  U\right)  =E_{1}\left(  U\right)  =-\pi\left(  U\right)  $.
Hence, we can rewrite (\ref{pf.eq.Stilde.1-dim.8}) as%
\[
\widetilde{S}_{\lambda/\mu}\left(  U\right)  =\prod_{\substack{i\in\left[
n\right]  ;\\\lambda_{i}>\mu_{i}}}\varphi^{\lambda_{i}-i}\left(  -\pi\left(
U\right)  \right)  .
\]

Comparing this with%
\begin{align*}
&  \prod_{\substack{i\geq1;\\\lambda_{i}>\mu_{i}}}\varphi^{\lambda_{i}%
-i}\left(  -\pi\left(  U\right)  \right) \\
&  =\left(  \prod_{\substack{i\in\left[  n\right]  ;\\\lambda_{i}>\mu_{i}%
}}\varphi^{\lambda_{i}-i}\left(  -\pi\left(  U\right)  \right)  \right)
\underbrace{\left(  \prod_{\substack{i>n;\\\lambda_{i}>\mu_{i}}}\varphi
^{\lambda_{i}-i}\left(  -\pi\left(  U\right)  \right)  \right)  }%
_{\substack{=1\\\text{(indeed, this is an empty product,}\\\text{since each
}i>n\text{ satisfies }\lambda_{i}=0\\\text{(because }\ell\left(
\lambda\right)  \leq n\text{) and }\mu_{i}=0\text{ (since }\ell\left(
\mu\right)  \leq n\text{)}\\\text{and thus cannot satisfy }\lambda_{i}>\mu
_{i}\text{)}}}\\
&  =\prod_{\substack{i\in\left[  n\right]  ;\\\lambda_{i}>\mu_{i}}%
}\varphi^{\lambda_{i}-i}\left(  -\pi\left(  U\right)  \right)  ,
\end{align*}
we obtain%
\[
\widetilde{S}_{\lambda/\mu}\left(  U\right)  =\prod_{\substack{i\geq
1;\\\lambda_{i}>\mu_{i}}}\varphi^{\lambda_{i}-i}\left(  -\pi\left(  U\right)
\right)  .
\]
This proves Lemma \ref{lem.Stil.1} \textbf{(a)}.
\end{proof}

Most importantly, we have a formula akin to \cite[(7.18)]{Macdon92}:

\begin{lemma}
\label{lem.Stilde.coproduct}Assume that $\mathbf{A}$ is an integral domain
such that the Frobenius morphism $\varphi:\mathbf{A}\rightarrow\mathbf{A}$ is
invertible. Let $\lambda$ and $\mu$ be two partitions. Let $U\subseteq V$ be
two finite-dimensional $F$-vector subspaces of $\mathbf{A}$. Then,%
\begin{equation}
S_{\lambda/\mu}\left(  V\sslash U\right)  =\sum_{\nu}\left(  -1\right)
^{\left\vert \lambda\right\vert -\left\vert \nu\right\vert }S_{\nu/\mu}\left(
V\right)  \cdot\varphi^{\dim\left(  V/U\right)  }\widetilde{S}_{\lambda/\nu
}\left(  U\right)  ,\ \ \ \ \ \ \ \ \ \ \label{eq.Stilde.coproduct}%
\end{equation}
where the sum ranges over all partitions $\nu$.
\end{lemma}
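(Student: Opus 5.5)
We will derive (\ref{eq.Stilde.coproduct}) from a factorization of generating functions, by the same route Macdonald uses for \cite[(7.18)]{Macdon92}. Write $m:=\dim\left(  V/U\right)$ and $W:=V\sslash U$, so $\dim W=m$.

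\textbf{Step 1: the $H$'s.} We first prove an identity expressing the $H_{r}\left(  W\right)$ through the $H$'s of $V$ and the $E$'s of $U$:%
\[
H_{r}\left(  W\right)  =\sum_{k\geq0}\left(  -1\right)  ^{k}H_{r-k}\left(
V\right)  \cdot\varphi^{m+r-k}E_{k}\left(  U\right)  ,
\]
or whichever Frobenius twist is dictated by the definitions. The tool is the $q$-polynomials. Macdonald's (7.7)--(7.8) and (7.20) write
\[
f_{V}\left(  t\right)  =\sum_{i}\left(  -1\right)  ^{n-i}E_{n-i}\left(  V\right)  t^{q^{i}},
\]
and the inverse-type series give the $H$'s. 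The key composition fact is
\[
f_{V}=f_{W}\circ f_{U}
\]
(as $q$-polynomials, i.e.\ composition). This holds because both sides are monic $q$-polynomials of degree $q^{\dim V}$ vanishing on $V$, and $\mathbf{A}$ is an integral domain. Composition of $q$-polynomials corresponds to a twisted (Frobenius-skew) product of coefficient sequences. Reading off coefficients, and inverting to pass from $E$'s to $H$'s using Macdonald's $H$--$E$ duality \cite[(7.13)-ish]{Macdon92}, yields the displayed identity. This is the case $\lambda=(r)$, $\mu=\varnothing$ of the lemma: $\widetilde{S}_{(r)/\nu}\left(  U\right)$ with $\nu=(r-k)$ reduces to a single twisted $E_{k}\left(  U\right)$.

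\textbf{Step 2: the general $\lambda/\mu$.} We plug the Step~1 expansion into each entry $\varphi^{\mu_{j}-j+1}H_{\lambda_{i}-\mu_{j}-i+j}\left(  W\right)$ of the Jacobi--Trudi determinant (\ref{eq.Slammu}) for $S_{\lambda/\mu}\left(  W\right)$. This factors the matrix as a product
\[
\left(  \text{matrix of twisted }E\left(  U\right)  \right)  \cdot\left(  \text{matrix of twisted }H\left(  V\right)  \right).
\]
Concretely, the entry in row $i$ and column $j$ becomes a sum over an intermediate index $p$ of
\[
\left(  \text{twist of }E_{\lambda_{i}-i-\left(  \nu\text{-index}\right)  }\left(  U\right)  \right)  \cdot\varphi^{\mu_{j}-j+1}H_{\ldots}\left(  V\right).
\]
Here the intermediate indices range over integers $c_{p}$ playing the role of $\nu_{p}-p$. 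The proof then finishes by Cauchy--Binet, taken over a large finite truncation of the intermediate index set. Strictly decreasing sequences $c_{1}>\cdots>c_{r}$ correspond to partitions $\nu$ via $c_{p}=\nu_{p}-p$. The maximal minor of the $H\left(  V\right)$-factor is $S_{\nu/\mu}\left(  V\right)$ by (\ref{eq.Slammu}), and the minor of the $E\left(  U\right)$-factor is $\varphi^{m}\widetilde{S}_{\lambda/\nu}\left(  U\right)$ by (\ref{eq.Stil=}). The signs $\left(  -1\right)  ^{k}$ collect into $\left(  -1\right)  ^{\left\vert \lambda\right\vert -\left\vert \nu\right\vert }$.

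\textbf{Main obstacle.} The bookkeeping of Frobenius powers is the expected difficulty. For the factorization to be a genuine matrix product, the exponent $\varphi^{\mu_{j}-j+1}$ on the $H$-side must depend only on the column $j$, and the exponent on the $E$-side must depend only on the row $i$, namely $\lambda_{i}-i$ plus the constant $m$. So in Step~1 we must choose the precise twisted form of the identity carefully, probably writing it as
\[
\varphi^{a}H_{s}\left(  W\right)  =\sum\ldots
\]
with the shift chosen to match (\ref{eq.Stil=}). Since $\varphi$ is an invertible ring endomorphism, it commutes with products and can be distributed freely; we will use this throughout. The finiteness issue in Cauchy--Binet is harmless: terms with $\nu\not\subseteq\lambda$ vanish by (\ref{eq.Stilde.0ifnotsub}), and terms with $\mu\not\subseteq\nu$ vanish by (\ref{eq.Slammu=0ifnotsub}). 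Hence only finitely many $\nu$ contribute, and a sufficiently large truncation suffices.
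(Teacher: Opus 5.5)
Your plan follows the paper's proof. Entrywise, Step 1 is the matrix identity $\mathbf{H}(V\sslash U)=\mathbf{H}(V)\cdot\varphi^{m}(\mathbf{E}(U))$, which the paper gets by inverting Macdonald's (7.17)(ii) with (7.9). Your derivation from $f_V=f_{V\sslash U}\circ f_U$ plus the $H$--$E$ inversion amounts to re-deriving (7.17)(ii), which is fine. Step 2 is the paper's Cauchy--Binet argument over a finite truncation, with the Jacobi--Trudi matrix transposed.

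However, the identity you display in Step 1 is false. The twist does not depend on $k$; the correct statement is
\[
H_{r}(W)=\sum_{k\geq0}(-1)^{k}H_{r-k}(V)\cdot\varphi^{m+r-1}E_{k}(U).
\]
Your own remark that Step 1 is the case $\lambda=(r)$, $\mu=\varnothing$ already forces this, since (\ref{eq.Stil=}) gives $\widetilde{S}_{(r)/(r-k)}(U)=\varphi^{r-1}E_{k}(U)$. As a check, take $U=V$ with $\dim V=2$ and $r=2$. Then $m=0$, $H_1(V)=E_1(V)$ and $H_2(V)=E_1(V)^{q+1}-E_2(V)^{q}$. Your version gives $E_2(V)-E_2(V)^{q}$, which is nonzero in general, instead of $H_2(V\sslash V)=0$.

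This is not cosmetic: the twist is determined by the mathematics, not chosen to match (\ref{eq.Stil=}). With $\varphi^{m+r-1}$, applying $\varphi^{\mu_j-j+1}$ gives exactly $\varphi^{m+\lambda_i-i}$ on the $E(U)$-factor, which depends only on the row, as Step 2 needs. With your $\varphi^{m+r-k}$, the exponent would be $m+1+c$, depending on the intermediate index $c=\lambda_i-i-k$. The $E$-minor would then not be $\varphi^{m}\widetilde{S}_{\lambda/\nu}(U)$.

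A smaller point concerns the truncation. Strictly decreasing $c_1>\cdots>c_u$ give weakly decreasing integer sequences $\nu$ with possibly $\nu_u<0$, and (\ref{eq.Slammu=0ifnotsub}) is only available for partitions. Also, knowing that only finitely many final terms are nonzero does not by itself justify truncating; what you need is that the truncated product agrees entrywise with the full one. Argue by supports: since $E_s(U)=0=H_s(V)$ for $s<0$, the intermediate index in entry $(i,j)$ lies in $[\mu_j-j,\lambda_i-i]\subseteq[\mu_u-u,\lambda_1-1]$. Truncating to this interval therefore preserves the product. Moreover, $c_u\geq\mu_u-u$ then gives $\nu_u\geq\mu_u\geq0$, so every $\nu$ that appears is a partition. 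This is what the paper's upper-triangularity bound $\mu_k\leq\nu_k\leq\lambda_k$ in Proposition \ref{prop.CB5} accomplishes. With these two repairs your argument is complete.
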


\begin{proof}
For any finite-dimensional $F$-vector subspace $W$ of $\mathbf{A}$, we define
the two $\mathbb{Z}\times\mathbb{Z}$-matrices%
\begin{align*}
\mathbf{H}\left(  W\right)   &  :=\left(  \varphi^{i+1}H_{j-i}\left(
W\right)  \right)  _{i,j\in\mathbb{Z}}\ \ \ \ \ \ \ \ \ \ \text{and}\\
\mathbf{E}\left(  W\right)   &  :=\left(  \left(  -1\right)  ^{j-i}\varphi
^{j}E_{j-i}\left(  W\right)  \right)  _{i,j\in\mathbb{Z}}%
\end{align*}
over $\mathbf{A}$. From \cite[(7.9)]{Macdon92}, we know that these two
matrices are upper-triangular and mutually inverse. In particular, $\left(
\mathbf{H}\left(  U\right)  \right)  ^{-1}=\mathbf{E}\left(  U\right)  $.

Moreover, from \cite[(7.17) (ii)]{Macdon92}, we have%
\[
\mathbf{H}\left(  V\right)  =\mathbf{H}\left(  V\sslash U\right)  \cdot
\varphi^{\dim\left(  V/U\right)  }\left(  \mathbf{H}\left(  U\right)  \right)
,
\]
where an algebra morphism such as $\varphi^{\dim\left(  V/U\right)  }$ is
applied to a matrix by applying it to each entry of the matrix. Thus,%
\begin{align}
\mathbf{H}\left(  V\sslash U\right)   &  =\mathbf{H}\left(  V\right)
\cdot\left(  \varphi^{\dim\left(  V/U\right)  }\left(  \mathbf{H}\left(
U\right)  \right)  \right)  ^{-1}\nonumber\\
&  =\mathbf{H}\left(  V\right)  \cdot\varphi^{\dim\left(  V/U\right)  }\left(
\underbrace{\left(  \mathbf{H}\left(  U\right)  \right)  ^{-1}}_{=\mathbf{E}%
\left(  U\right)  }\right) \nonumber\\
&  \ \ \ \ \ \ \ \ \ \ \ \ \ \ \ \ \ \ \ \ \left(  \text{since }\varphi\text{
is an algebra morphism}\right) \nonumber\\
&  =\mathbf{H}\left(  V\right)  \cdot\varphi^{\dim\left(  V/U\right)  }\left(
\mathbf{E}\left(  U\right)  \right)  . \label{pf.lem.Stilde.coproduct.HHE}%
\end{align}

From this equality, we proceed using the Cauchy--Binet theorem, analogously to
\cite[proof of (7.18)]{Macdon92} (see \cite[errata, \textquotedblleft page 19,
proof of (6.13)\textquotedblright]{Macdon92} for the details of that proof),
to prove (\ref{eq.Stilde.coproduct}). See the Appendix (Section
\ref{sec.apx-coprod}) for the details of this argument.
\end{proof}

\subsection{Proof of the recursion}

One last formula -- a sort of Pieri rule -- will be crucial to our proof of
Theorem \ref{thm.skew-V/L}:

\begin{lemma}
\label{lem.SlamVL-ie}Assume that $\mathbf{A}$ is an integral domain such that
the Frobenius morphism $\varphi:\mathbf{A}\rightarrow\mathbf{A}$ is invertible.

Let $n\in\mathbb{N}$. Let $V$ be an $n$-dimensional $F$-vector subspace of
$\mathbf{A}$. Let $\lambda$ and $\mu$ be two partitions of length $<n$. Let
$\ell\in V$ be nonzero, and let $L=\operatorname*{span}\left(  \ell\right)
\subseteq V$. For any vertical strip $\lambda/\nu$, we set%
\[
\mathbf{q}\left(  \lambda,\nu\right)  :=\left(  q-1\right)  \sum
_{\substack{i\geq1;\\\lambda_{i}>\nu_{i}}}q^{\lambda_{i}+n-1-i}.
\]
Then,%
\[
S_{\lambda/\mu}\left(  V\sslash L\right)  =\sum_{\lambda/\nu\text{ is a
vertical strip}}\left(  -1\right)  ^{\left\vert \lambda\right\vert -\left\vert
\nu\right\vert }\ell^{\mathbf{q}\left(  \lambda,\nu\right)  }S_{\nu/\mu
}\left(  V\right)  .
\]
(The sum is understood to range over all partitions $\nu$ such that
$\lambda/\nu$ is a vertical strip.)
\end{lemma}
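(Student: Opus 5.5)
Apply Lemma \ref{lem.Stilde.coproduct} with $U=L$. Since $\dim L=1$, we have $\dim\left(V/L\right)=n-1$, and the lemma gives
\[
S_{\lambda/\mu}\left(V\sslash L\right)=\sum_{\nu}\left(-1\right)^{\left\vert\lambda\right\vert-\left\vert\nu\right\vert}S_{\nu/\mu}\left(V\right)\cdot\varphi^{n-1}\widetilde{S}_{\lambda/\nu}\left(L\right).
\]
By Lemma \ref{lem.Stil.1} \textbf{(b)}, every addend with $\lambda/\nu$ not a vertical strip vanishes. So the sum restricts to those $\nu$ for which $\lambda/\nu$ is a vertical strip. (For these $\nu$ we have $\nu\subseteq\lambda$, so there are finitely many.)

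For such $\nu$, Lemma \ref{lem.Stil.1} \textbf{(a)} gives
\[
\widetilde{S}_{\lambda/\nu}\left(L\right)=\prod_{i\geq1;\ \lambda_{i}>\nu_{i}}\varphi^{\lambda_{i}-i}\left(-\pi\left(L\right)\right).
\]
By Lemma \ref{lem.pispanv}, $-\pi\left(L\right)=\ell^{q-1}$. Applying $\varphi^{n-1}$ (an algebra morphism) factor by factor, each factor becomes $\varphi^{\lambda_{i}+n-1-i}\left(\ell^{q-1}\right)=\ell^{\left(q-1\right)q^{\lambda_{i}+n-1-i}}$. Multiplying these factors gives $\ell^{\mathbf{q}\left(\lambda,\nu\right)}$, which is exactly the claimed formula.

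The one point needing care is that $\varphi^{\lambda_{i}+n-1-i}$ might have a negative exponent, in which case $\ell^{(q-1)q^{\lambda_i+n-1-i}}$ would not be an ordinary power. I would dispose of this using the length hypothesis. Any $i$ with $\lambda_{i}>\nu_{i}$ has $\lambda_{i}\geq1$, so $i\leq\ell\left(\lambda\right)<n$. Hence $\lambda_{i}+n-1-i\geq1+n-1-(n-1)=1\geq0$, and all exponents $q^{\lambda_{i}+n-1-i}$ are genuine positive integers. This confirms that $\mathbf{q}\left(\lambda,\nu\right)\in\mathbb{N}$ and that $\varphi^{k}(a)=a^{q^{k}}$ applies via (\ref{eq.phiia}).

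The main work is already contained in Lemma \ref{lem.Stilde.coproduct}, whose proof is deferred to the Appendix. Here I only assemble the pieces. The remaining checks are routine:
\begin{enumerate}
\item[(1)] the sign $\left(-1\right)^{\left\vert\lambda\right\vert-\left\vert\nu\right\vert}$ carries over unchanged, since Lemma \ref{lem.pispanv} absorbs the minus sign inside $-\pi(L)$;
\item[(2)] $\pi\left(L\right)$ computed from the generator $\ell$ agrees with $-\ell^{q-1}$ regardless of which nonzero $\ell\in L$ is chosen, as Lemma \ref{lem.pispanv} is stated for any nonzero $v$.
\end{enumerate}
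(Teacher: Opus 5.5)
Your proposal is correct and follows the paper's proof step for step. You apply Lemma \ref{lem.Stilde.coproduct} with $U=L$, discard the terms where $\lambda/\nu$ is not a vertical strip via Lemma \ref{lem.Stil.1} \textbf{(b)}, and evaluate $\varphi^{n-1}\widetilde{S}_{\lambda/\nu}\left(L\right)$ using Lemma \ref{lem.Stil.1} \textbf{(a)} together with $-\pi\left(L\right)=\ell^{q-1}$ from Lemma \ref{lem.pispanv}. Your explicit check that $\lambda_{i}+n-1-i\geq1$ whenever $\lambda_{i}>\nu_{i}$ (using that $\lambda$ has length $<n$) is a welcome addition, since the paper invokes (\ref{eq.phiia}) at that point without spelling out why the exponent is nonnegative.
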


\begin{proof}
We have $\dim L=1$ (since $L$ is the span of the nonzero vector $\ell$) and
$\dim V=n$, so that
\[
\dim\left(  V/L\right)  =\underbrace{\dim V}_{=n}-\underbrace{\dim L}%
_{=1}=n-1.
\]
Moreover, from $L=\operatorname*{span}\left(  \ell\right)  $, we obtain (using
the notation $\pi\left(  L\right)  $ defined previously)%
\begin{equation}
\pi\left(  L\right)  =\pi\left(  \operatorname*{span}\left(  \ell\right)
\right)  =-\ell^{q-1} \label{pf.lem.SlamVL-ie.piL}%
\end{equation}
(by Lemma \ref{lem.pispanv}, applied to $v=\ell$, since $\ell$ is nonzero).

Applying (\ref{eq.Stilde.coproduct}) to $U=L$, we obtain%
\begin{align}
S_{\lambda/\mu}\left(  V\sslash L\right)   &  =\sum_{\nu}\left(  -1\right)
^{\left\vert \lambda\right\vert -\left\vert \nu\right\vert }S_{\nu/\mu}\left(
V\right)  \cdot\underbrace{\varphi^{\dim\left(  V/L\right)  }}%
_{\substack{=\varphi^{n-1}\\\text{(since }\dim\left(  V/L\right)
=n-1\text{)}}}\widetilde{S}_{\lambda/\nu}\left(  L\right) \nonumber\\
&  =\sum_{\nu}\left(  -1\right)  ^{\left\vert \lambda\right\vert -\left\vert
\nu\right\vert }S_{\nu/\mu}\left(  V\right)  \cdot\varphi^{n-1}\widetilde{S}%
_{\lambda/\nu}\left(  L\right)  . \label{pf.lem.SlamVL-ie.1}%
\end{align}
If $\nu$ is a partition for which $\lambda/\nu$ is \textbf{not} a vertical
strip, then (\ref{eq.Stilde.1-dim.0}) (applied to $\nu$ and $L$ instead of
$\mu$ and $U$) yields $\widetilde{S}_{\lambda/\nu}\left(  L\right)  =0$. Thus,
in the sum on the right hand side of (\ref{pf.lem.SlamVL-ie.1}), all addends
corresponding to such partitions $\nu$ vanish. Consequently, we can simplify
the sum by removing all these addends, and obtain%
\begin{equation}
S_{\lambda/\mu}\left(  V\sslash L\right)  =\sum_{\substack{\lambda/\nu\text{
is a}\\\text{vertical strip}}}\left(  -1\right)  ^{\left\vert \lambda
\right\vert -\left\vert \nu\right\vert }S_{\nu/\mu}\left(  V\right)
\cdot\varphi^{n-1}\widetilde{S}_{\lambda/\nu}\left(  L\right)  .
\label{pf.lem.SlamVL-ie.2}%
\end{equation}

Now, let $\nu$ be a partition such that $\lambda/\nu$ is a vertical strip.
Then, (\ref{eq.Stilde.1-dim}) (applied to $L$ and $\nu$ instead of $U$ and
$\mu$) yields
\begin{align*}
\widetilde{S}_{\lambda/\nu}\left(  L\right)   &  =\prod_{\substack{i\geq
1;\\\lambda_{i}>\nu_{i}}}\varphi^{\lambda_{i}-i}\left(  -\underbrace{\pi
\left(  L\right)  }_{\substack{=-\ell^{q-1}\\\text{(by
(\ref{pf.lem.SlamVL-ie.piL}))}}}\right) \\
&  =\prod_{\substack{i\geq1;\\\lambda_{i}>\nu_{i}}}\varphi^{\lambda_{i}%
-i}\left(  -\left(  -\ell^{q-1}\right)  \right)  =\prod_{\substack{i\geq
1;\\\lambda_{i}>\nu_{i}}}\varphi^{\lambda_{i}-i}\left(  \ell^{q-1}\right)  .
\end{align*}
Applying the algebra morphism $\varphi^{n-1}$ to both sides of this equality,
we find
\begin{align}
\varphi^{n-1}\widetilde{S}_{\lambda/\nu}\left(  L\right)   &  =\prod
_{\substack{i\geq1;\\\lambda_{i}>\nu_{i}}}\underbrace{\varphi^{n-1}%
\varphi^{\lambda_{i}-i}\left(  \ell^{q-1}\right)  }_{\substack{=\varphi
^{n-1+\lambda_{i}-i}\left(  \ell^{q-1}\right)  \\=\left(  \ell^{q-1}\right)
^{q^{n-1+\lambda_{i}-i}}\\\text{(by (\ref{eq.phiia}))}}}=\prod
_{\substack{i\geq1;\\\lambda_{i}>\nu_{i}}}\left(  \ell^{q-1}\right)
^{q^{n-1+\lambda_{i}-i}}=\left(  \ell^{q-1}\right)  ^{\sum_{\substack{i\geq
1;\\\lambda_{i}>\nu_{i}}}q^{n-1+\lambda_{i}-i}}\nonumber\\
&  =\ell^{\left(  q-1\right)  \sum_{\substack{i\geq1;\\\lambda_{i}>\nu_{i}%
}}q^{n-1+\lambda_{i}-i}}=\ell^{\mathbf{q}\left(  \lambda,\nu\right)  }
\label{pf.lem.SlamVL-ie.5}%
\end{align}
(since $\left(  q-1\right)  \sum_{\substack{i\geq1;\\\lambda_{i}>\nu_{i}%
}}\underbrace{q^{n-1+\lambda_{i}-i}}_{=q^{\lambda_{i}+n-1-i}}=\left(
q-1\right)  \sum_{\substack{i\geq1;\\\lambda_{i}>\nu_{i}}}q^{\lambda
_{i}+n-1-i}=\mathbf{q}\left(  \lambda,\nu\right)  $).

Forget that we fixed $\nu$. We thus have shown that (\ref{pf.lem.SlamVL-ie.5})
holds for each partition $\nu$ such that $\lambda/\nu$ is a vertical strip.
Thus, (\ref{pf.lem.SlamVL-ie.2}) becomes%
\begin{align*}
S_{\lambda/\mu}\left(  V\sslash L\right)   &  =\sum_{\substack{\lambda
/\nu\text{ is a}\\\text{vertical strip}}}\left(  -1\right)  ^{\left\vert
\lambda\right\vert -\left\vert \nu\right\vert }S_{\nu/\mu}\left(  V\right)
\cdot\underbrace{\varphi^{n-1}\widetilde{S}_{\lambda/\nu}\left(  L\right)
}_{\substack{=\ell^{\mathbf{q}\left(  \lambda,\nu\right)  }\\\text{(by
(\ref{pf.lem.SlamVL-ie.5}))}}}\\
&  =\sum_{\substack{\lambda/\nu\text{ is a}\\\text{vertical strip}}}\left(
-1\right)  ^{\left\vert \lambda\right\vert -\left\vert \nu\right\vert }%
\ell^{\mathbf{q}\left(  \lambda,\nu\right)  }S_{\nu/\mu}\left(  V\right)  .
\end{align*}
This proves Lemma \ref{lem.SlamVL-ie}.
\end{proof}

\begin{proof}
[Proof of Theorem \ref{thm.skew-V/L}.]Let $n=\dim V$. Thus, $\left\vert
V\right\vert =\left\vert F\right\vert ^{n}=q^{n}$ (since $\left\vert
F\right\vert =q$), so that $\left\vert V\setminus\left\{  0\right\}
\right\vert =q^{n}-1$. The partitions $\lambda$ and $\mu$ have length $<\dim
V=n$; thus, $n>0$, so that $q^{n}\equiv0\operatorname{mod}q$ and therefore
$q^{n}=0$ in $F$. Hence, $q^{n}-1=-1$ in $F$.

For any vertical strip $\lambda/\nu$, define $\mathbf{q}\left(  \lambda
,\nu\right)  $ as in Lemma \ref{lem.SlamVL-ie}. It is clear that
$\mathbf{q}\left(  \lambda,\lambda\right)  =0$, since the sum is empty for
$\nu=\lambda$.

By Lemma \ref{lem.linesum} (applied to $A=\mathbf{A}$ and $b_{L}%
=S_{\lambda/\mu}\left(  V\sslash L\right)  $), we have
\begin{align}
&  \sum_{L\subseteq V\text{ line}}S_{\lambda/\mu}\left(  V\sslash L\right)
\nonumber\\
&  =-\sum_{w\in V\setminus\left\{  0\right\}  }\underbrace{S_{\lambda/\mu
}\left(  V\sslash \operatorname*{span}\left(  w\right)  \right)
}_{\substack{=\sum_{\lambda/\nu\text{ is a vertical strip}}\left(  -1\right)
^{\left\vert \lambda\right\vert -\left\vert \nu\right\vert }w^{\mathbf{q}%
\left(  \lambda,\nu\right)  }S_{\nu/\mu}\left(  V\right)  \\\text{(by Lemma
\ref{lem.SlamVL-ie}, applied to }\ell=w\text{ and }L=\operatorname*{span}%
\left(  w\right)  \text{)}}}\nonumber\\
&  =-\sum_{w\in V\setminus\left\{  0\right\}  }\ \ \sum_{\lambda/\nu\text{ is
a vertical strip}}\left(  -1\right)  ^{\left\vert \lambda\right\vert
-\left\vert \nu\right\vert }w^{\mathbf{q}\left(  \lambda,\nu\right)  }%
S_{\nu/\mu}\left(  V\right) \nonumber\\
&  =-\sum_{\lambda/\nu\text{ is a vertical strip}}\left(  -1\right)
^{\left\vert \lambda\right\vert -\left\vert \nu\right\vert }\left(  \sum_{w\in
V\setminus\left\{  0\right\}  }w^{\mathbf{q}\left(  \lambda,\nu\right)
}\right)  S_{\nu/\mu}\left(  V\right)  \label{pf.thm.skew-V/L.2}%
\end{align}
(here, we have interchanged the summation signs).

Now, let $\nu\neq\lambda$ be a partition such that $\lambda/\nu$ is a vertical
strip. Then, $\nu\subseteq\lambda$, so that the partition $\nu$ has length
$<n$ (since $\lambda$ has length $<n$). Moreover, from $\nu\neq\lambda$, we
conclude that at least one positive integer $i\geq1$ satisfies $\nu_{i}%
\neq\lambda_{i}$ and therefore $\nu_{i}<\lambda_{i}$ (since $\nu
\subseteq\lambda$ entails $\nu_{i}\leq\lambda_{i}$), that is, $\lambda_{i}%
>\nu_{i}$.

Let $i_{1},i_{2},\ldots,i_{k}$ be all the positive integers $i\geq1$
satisfying $\lambda_{i}>\nu_{i}$ (listed without repetitions, so there are $k$
of them). Then, each of these positive integers $i_{1},i_{2},\ldots,i_{k}$
must be $\leq n-1$ (since both $\lambda$ and $\nu$ have length $<n$, so that
each $i\geq n$ must satisfy $\lambda_{i}=0=\nu_{i}$, and therefore each
positive integer $i\geq1$ satisfying $\lambda_{i}>\nu_{i}$ must be $\leq
n-1$). Hence, there are at most $n-1$ of them. In other words, $k\leq n-1<n$.
That is, $n>k$. But we must also have $k>0$ (since we have shown that at least
one positive integer $i\geq1$ satisfies $\lambda_{i}>\nu_{i}$).

For each $j\in\left[  k\right]  $, set $a_{j}:=\lambda_{i_{j}}+n-1-i_{j}$.
This is a nonnegative integer, because we have $i_{j}\leq n-1$ (since each of
the positive integers $i_{1},i_{2},\ldots,i_{k}$ is $\leq n-1$) and thus
$a_{j}=\lambda_{i_{j}}+n-1-\underbrace{i_{j}}_{\leq n-1}\geq\lambda_{i_{j}%
}+n-1-\left(  n-1\right)  =\lambda_{i_{j}}\geq0$. Hence, Lemma
\ref{lem.zerosum} shows that%
\begin{equation}
\sum_{w\in V\setminus\left\{  0\right\}  }w^{\left(  q-1\right)  \left(
q^{a_{1}}+q^{a_{2}}+\cdots+q^{a_{k}}\right)  }=0. \label{pf.thm.skew-V/L.3}%
\end{equation}
Since
\begin{align*}
&  q^{a_{1}}+q^{a_{2}}+\cdots+q^{a_{k}}\\
&  =\sum_{j=1}^{k}q^{a_{j}}=\sum_{j=1}^{k}q^{\lambda_{i_{j}}+n-1-i_{j}%
}\ \ \ \ \ \ \ \ \ \ \left(  \text{since }a_{j}=\lambda_{i_{j}}+n-1-i_{j}%
\right) \\
&  =\sum_{\substack{i\geq1;\\\lambda_{i}>\nu_{i}}}q^{\lambda_{i}%
+n-1-i}\ \ \ \ \ \ \ \ \ \ \left(
\begin{array}
[c]{c}%
\text{since }i_{1},i_{2},\ldots,i_{k}\text{ are precisely the}\\
\text{positive integers }i\geq1\text{ satisfying }\lambda_{i}>\nu_{i}%
\end{array}
\right)  ,
\end{align*}
we have%
\[
\left(  q-1\right)  \left(  q^{a_{1}}+q^{a_{2}}+\cdots+q^{a_{k}}\right)
=\left(  q-1\right)  \sum_{\substack{i\geq1;\\\lambda_{i}>\nu_{i}}%
}q^{\lambda_{i}+n-1-i}=\mathbf{q}\left(  \lambda,\nu\right)
\]
(by the definition of $\mathbf{q}\left(  \lambda,\nu\right)  $). Thus, we can
rewrite (\ref{pf.thm.skew-V/L.3}) as%
\begin{equation}
\sum_{w\in V\setminus\left\{  0\right\}  }w^{\mathbf{q}\left(  \lambda
,\nu\right)  }=0. \label{pf.thm.7.25.7}%
\end{equation}

Forget that we fixed $\nu$. We thus have proved (\ref{pf.thm.7.25.7}) for each
partition $\nu\neq\lambda$ such that $\lambda/\nu$ is a vertical strip. Thus,
on the right hand side of (\ref{pf.thm.skew-V/L.2}), all addends of the outer
sum are $0$ except for the addend for $\nu=\lambda$. Hence,
(\ref{pf.thm.skew-V/L.2}) simplifies to
\begin{align*}
\sum_{L\subseteq V\text{ line}}S_{\lambda/\mu}\left(  V\sslash L\right)   &
=-\underbrace{\left(  -1\right)  ^{\left\vert \lambda\right\vert -\left\vert
\lambda\right\vert }}_{=\left(  -1\right)  ^{0}=1}\left(  \sum_{w\in
V\setminus\left\{  0\right\}  }\underbrace{w^{\mathbf{q}\left(  \lambda
,\lambda\right)  }}_{\substack{=1\\\text{(since }\mathbf{q}\left(
\lambda,\lambda\right)  =0\text{)}}}\right)  S_{\lambda/\mu}\left(  V\right)
\\
&  =-\underbrace{\left(  \sum_{w\in V\setminus\left\{  0\right\}  }1\right)
}_{\substack{=\left\vert V\setminus\left\{  0\right\}  \right\vert
\\=q^{n}-1\\=-1\text{ in }F}}S_{\lambda/\mu}\left(  V\right)  =-\left(
-1\right)  S_{\lambda/\mu}\left(  V\right)  =S_{\lambda/\mu}\left(  V\right)
.
\end{align*}
This proves Theorem \ref{thm.skew-V/L}.
\end{proof}

\section{Macdonald's sum-over-flags formula}

\subsection{The sum-over-flags formula}

Macdonald viewed Theorem \ref{thm.7.25} as a stepping stone towards an
explicit sum-over-flags formula for the 7th Variation Schur functions
(\cite[(7.24 ?)]{Macdon92}):

\begin{theorem}
\label{thm.7.24}Assume that $\mathbf{A}$ is an integral domain. Let
$n\in\mathbb{N}$. Let $V$ be an $n$-dimensional $F$-vector subspace of
$\mathbf{A}$. Let $\lambda=\left(  \lambda_{1},\lambda_{2},\ldots,\lambda
_{n}\right)  $ be a partition of length $\leq n$. Then,%
\[
S_{\lambda}\left(  V\right)  =\sum_{\substack{V=V_{0}>V_{1}>\cdots
>V_{n}=0\\\text{is a complete flag in }V}}\ \ \prod_{i=1}^{n}H_{\lambda_{i}%
}\left(  V_{i-1}\sslash V_{i}\right)  .
\]

\end{theorem}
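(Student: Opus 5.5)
We argue by induction on $n$, peeling off the last step $V_{n-1}\supset V_n=0$ of the flag. For $n=0$ both sides equal $1$. For the inductive step, group the flags in the sum by their last nonzero member $V_{n-1}=L$, which is a line in $V$. Flags $V=V_0>\cdots>V_{n-1}=L$ should correspond bijectively to complete flags $W_0>\cdots>W_{n-1}=0$ in the $(n-1)$-dimensional space $V\sslash L$, via $W_i=V_i\sslash L$. Moreover we expect
\[
\left(V_{i-1}\sslash L\right)\sslash\left(V_i\sslash L\right)=V_{i-1}\sslash V_i .
\]
Both facts should follow from the composition rule $f_{V_i}=f_{V_i\sslash L}\circ f_L$ (Macdonald's (7.7)--(7.8); it holds since $f_{V_i\sslash L}\circ f_L$ is a $q$-polynomial of degree $q^{\dim V_i}$ vanishing on $V_i$), together with injectivity of $\widetilde{f}_L$ modulo $L$ in an integral domain. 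Write $\bar\lambda=(\lambda_1,\ldots,\lambda_{n-1})$. The induction hypothesis applied to $V\sslash L$ and $\bar\lambda$ then reduces the theorem to the one-step identity
\[
S_{\lambda}\left(V\right)=\sum_{L\subseteq V\text{ line}}S_{\bar\lambda}\left(V\sslash L\right)\cdot H_{\lambda_n}\left(L\right). \qquad (\ast)
\]

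If $\lambda_n=0$, then $H_0(L)=1$, $\ell(\lambda)<n$ and $S_{\bar\lambda}(V\sslash L)=S_\lambda(V\sslash L)$, so $(\ast)$ is exactly Theorem \ref{thm.7.25}. That theorem is available for arbitrary integral domains via the appendix reduction.

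For $m:=\lambda_n>0$ we shift. Since $A_{\alpha+(m^n)}=\varphi^m(A_\alpha)$ (the columns $x_i^{q^{\alpha_j+m}}$ are $m$-fold Frobenius images), we get
\[
S_{\lambda}(V)=\varphi^m\!\left(S_{\lambda-(m^n)}(V)\right)\cdot A_\delta(V)^{q^m-1},
\]
where $A_\delta(V)$ is the Moore determinant of a basis. The factor $A_\delta(V)^{q^m-1}$ is basis-independent, because a change of basis multiplies $A_\delta$ by a determinant in $F^\times$, whose $(q^m-1)$-th power is $1$. The same identity holds for $V\sslash L$ with $\bar\lambda$ and $n-1$. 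For a line $L=\operatorname{span}(\ell)$ we have $H_m(L)=\ell^{q^m}/\ell=\ell^{q^m-1}$. Now apply Theorem \ref{thm.7.25} to $\lambda-(m^n)$, which has length $<n$, and then apply the ring morphism $\varphi^m$. This reduces $(\ast)$ to the identity
\[
A_\delta(V)^{q^m-1}=A_\delta(V\sslash L)^{q^m-1}\,\ell^{q^m-1}\qquad\text{for each line }L=\operatorname{span}(\ell)\subseteq V .
\]

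This Moore-determinant factorization is the step I expect to be the real obstacle. I would prove $A_\delta(\ell,x_2,\ldots,x_n)=\pm\,\ell\cdot A_\delta\!\left(f_L(x_2),\ldots,f_L(x_n)\right)$, where $(\ell,x_2,\ldots,x_n)$ is a basis of $V$, so that $(f_L(x_2),\ldots,f_L(x_n))$ is a basis of $V\sslash L$. First try: use the product formula $A_\delta(x_1,\ldots,x_n)=\prod_{i}\prod_{c\in\operatorname{span}(x_{i+1},\ldots,x_n)}(x_i+c)$, up to sign and with a suitable ordering. Then group the factors with $i\geq2$ into $f_L$-values, using $f_L(x+c)=\prod_{a\in F}(x+c+a\ell)$. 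Alternatively, verify the factorization universally in $F[\ell,x_2,\ldots,x_n]$ by comparing degrees and roots. The leftover sign $\pm1$ disappears after raising to the power $q^m-1$, since $q^m-1$ is even for odd $q$ and signs are trivial in characteristic $2$. A final bookkeeping check is that the assignment $V_{n-1}=L$ matches the ordering $i=n$ in $\prod_{i=1}^nH_{\lambda_i}(V_{i-1}\sslash V_i)$, which it does since $V_{n-1}\sslash 0=V_{n-1}$.
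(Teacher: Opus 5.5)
Your proof is correct. For $\lambda_n=0$ it coincides with the paper's Case~1: Theorem~\ref{thm.7.25}, then the induction hypothesis on $V\sslash L$, the flag bijection $U\mapsto U\sslash L$, and Macdonald's (7.16). The two arguments differ in how they handle $\lambda_n>0$.

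The paper uses strong induction on $\left\vert \lambda\right\vert +n$ and removes one column at a time. Lemma~\ref{lem.fullhouse-1} gives $S_\lambda(V)=(-1)^n\pi(V)\,(S_{\lambda\ominus1}(V))^q$. The induction hypothesis is then applied to $\lambda\ominus1$ on the \emph{same} $V$. Finally, $\pi(V)$ is distributed over the whole flag by Lemma~\ref{lem.pi1}, which rests on Macdonald's (7.21), and absorbed factor by factor via Lemma~\ref{lem.Hri1}.

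You instead induct on $n$ alone. You first reduce everything to the one-line identity $(\ast)$, which is a natural strengthening of Theorem~\ref{thm.7.25}. You then remove all $m=\lambda_n$ columns at once and invoke Theorem~\ref{thm.7.25} a second time, for $\lambda-(m^n)$. Two links to the paper:
\begin{itemize}
\item Your shift identity is Lemma~\ref{lem.fullhouse-1} for general $m$, since $A_\delta(V)^{q-1}=E_n(V)=(-1)^n\pi(V)$.
\item Your key factorization $A_\delta(V)^{q^m-1}=\ell^{q^m-1}A_\delta(V\sslash L)^{q^m-1}$ is, for $m=1$, exactly $\pi(V)=\pi(L)\,\pi(V\sslash L)$. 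That is the two-step case of Lemma~\ref{lem.pi1}, and general $m$ follows by raising to the power $1+q+\cdots+q^{m-1}$.
\end{itemize}

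Your Moore-determinant proof of the factorization goes through without difficulty. Write the product formula with coefficient tuples, $\prod_i\prod_{a\in F^{i-1}}(y_i+a_1y_1+\cdots+a_{i-1}y_{i-1})$ with $y_1=\ell$. By additivity of $f_L$, the $a_1$-factors group into $f_L(x_i+\cdots)=f_L(x_i)+\sum_{k\geq2}a_kf_L(x_k)$. This gives $\ell$ times the Moore product for $(f_L(x_2),\ldots,f_L(x_n))$, with no injectivity needed for the regrouping. The leftover sign is killed by the exponent $q^m-1$.

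What each route buys:
\begin{itemize}
\item Yours needs only one-step multiplicativity, not flag-wide multiplicativity. It proves that step from scratch rather than citing (7.20)--(7.22), and it reads off $H_m(L)=\ell^{q^m-1}$ directly.
\item The paper's keeps Theorem~\ref{thm.7.25} confined to the case $\lambda_n=0$. It handles $\lambda_n\geq1$ purely through induction on $\left\vert \lambda\right\vert$ and the cited properties of $\pi$ on whole flags.
\end{itemize}
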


Since Macdonald does not explain how this can be derived from Theorem
\ref{thm.7.25}, we shall do this here. We do still need some preparations.

\subsection{Lemmas}

If $V$ is a finite-dimensional $F$-vector subspace of $\mathbf{A}$, then
$\pi\left(  V\right)  $ shall denote the product $\prod_{u\in V\setminus
\left\{  0\right\}  }u$ of all nonzero vectors in $V$. This product will play
an important role in the next few lemmas.

\begin{lemma}
\label{lem.pi1}Let $n$ be a positive integer. Assume that $\mathbf{A}$ is an
integral domain. Let $V$ be an $n$-dimensional $F$-vector subspace of
$\mathbf{A}$. Let $V=V_{0}>V_{1}>\cdots>V_{n}=0$ be a complete flag in $V$.
Then,%
\[
\pi\left(  V\right)  =\prod_{i=1}^{n}\pi\left(  V_{i-1}\sslash V_{i}\right)
.
\]

\end{lemma}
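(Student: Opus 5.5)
We prove the identity by induction on $n$, peeling off the last step of the flag. The multiplicative identity we need is
\[
\pi\left(  V\right)  =\pi\left(  U\right)  \cdot\pi\left(  V\sslash U\right)
\]
for every subspace $U\subseteq V$; we will use it only for $U=V_{1}$. For $n=1$ the flag is $V=V_{0}>V_{1}=0$. Here $V_{0}\sslash V_{1}=\widetilde{f}_{0}\left(  V\right)  $ with $f_{0}\left(  t\right)  =t$, so $V_{0}\sslash V_{1}=V$ and the claim is trivial.

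For the inductive step, let $n\geq2$. Note that $V_{n-1}$ is a line, so $V_{n-2}\sslash V_{n-1}$ involves a line. It is more convenient, however, to peel off $V_{1}$ using a quotient argument. We therefore first prove the displayed identity for any subspace $U\subseteq V$. The nonzero vectors of $V$ split into those in $U\setminus\left\{  0\right\}  $ and those in cosets $x+U$ with $x\notin U$. Grouping by cosets gives
\[
\pi\left(  V\right)  =\pi\left(  U\right)  \prod_{C\in\left(  V/U\right)  \setminus\left\{  0\right\}  }\ \prod_{v\in C}v.
\]
For $C=x+U$, the inner product is $\prod_{u\in U}\left(  x+u\right)  =f_{U}\left(  x\right)  =\widetilde{f}_{U}\left(  x\right)  $. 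Since $\mathbf{A}$ is an integral domain, $\widetilde{f}_{U}|_{V}$ has kernel exactly $U$. Hence $C\mapsto\widetilde{f}_{U}\left(  x\right)  $ is a bijection from the nonzero cosets onto $\left(  V\sslash U\right)  \setminus\left\{  0\right\}  $, and the outer product equals $\pi\left(  V\sslash U\right)  $. This gives the displayed identity.

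Now apply it with $U=V_{1}$, so that $\pi\left(  V\right)  =\pi\left(  V_{1}\right)  \pi\left(  V_{0}\sslash V_{1}\right)  $. The flag $V_{1}>V_{2}>\cdots>V_{n}=0$ is a complete flag in the $\left(  n-1\right)  $-dimensional space $V_{1}$, and $n-1\geq1$. By the induction hypothesis, $\pi\left(  V_{1}\right)  =\prod_{i=2}^{n}\pi\left(  V_{i-1}\sslash V_{i}\right)  $. Combining the two equalities yields the lemma.

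The only step needing care is the bijection between nonzero cosets and nonzero elements of $V\sslash U$. It relies on $\widetilde{f}_{U}$ being $F$-linear with kernel $U$ on $V$, which the paper noted holds for integral domains. Linearity gives well-definedness, since $\widetilde{f}_{U}\left(  x+u\right)  =\widetilde{f}_{U}\left(  x\right)  $ for $u\in U$. Surjectivity holds by definition of $V\sslash U$, and the kernel condition gives injectivity and ensures that nonzero cosets go to nonzero elements. Everything else is routine regrouping of a finite product.
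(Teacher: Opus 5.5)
Your proof is correct and follows essentially the paper's route. The paper cites Macdonald's (7.21) for $\pi(V_{i-1}\sslash V_i)=\prod_{u\in V_{i-1}\setminus V_i}u$ and multiplies over the disjoint decomposition $V\setminus\{0\}=\bigsqcup_{i}(V_{i-1}\setminus V_i)$; you instead prove that fact yourself via the coset-grouping bijection (using $\ker(\widetilde{f}_U|_V)=U$ in an integral domain) and unroll the product as an induction, which makes your argument self-contained.
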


\begin{proof}
For each $i\in\left[  n\right]  $, we have $\pi\left(  V_{i-1}\sslash V_{i}%
\right)  =\prod_{u\in V_{i-1}-V_{i}}u$ (by \cite[(7.21)]{Macdon92}, applied to
$U=V_{i-1}$ and $U^{\prime}=V_{i}$). Multiplying these $n$ equalities, we find%
\[
\prod_{i=1}^{n}\pi\left(  V_{i-1}\sslash V_{i}\right)  =\underbrace{\prod
_{i=1}^{n}\ \ \prod_{u\in V_{i-1}-V_{i}}}_{\substack{=\prod_{u\in V_{0}-V_{n}%
}\\\text{(since }V_{0}-V_{n}\text{ is the union of the}\\\text{disjoint sets
}V_{i-1}-V_{i}\text{ for all }i\in\left[  n\right]  \text{)}}}u=\prod_{u\in
V_{0}-V_{n}}u=\prod_{u\in V-\left\{  0\right\}  }u
\]
(since $V_{0}=V$ and $V_{n}=0=\left\{  0\right\}  $). But we also have
$\pi\left(  V\right)  =\prod_{u\in V-\left\{  0\right\}  }u$ by the definition
of $\pi\left(  V\right)  $. Comparing these two equalities, we find
$\pi\left(  V\right)  =\prod_{i=1}^{n}\pi\left(  V_{i-1}\sslash V_{i}\right)
$. This proves Lemma \ref{lem.pi1}.
\end{proof}

\begin{lemma}
\label{lem.Hri1}Let $U$ be a $1$-dimensional $F$-vector subspace of
$\mathbf{A}$. Let $r$ be a positive integer. Then,%
\[
\pi\left(  U\right)  \cdot\varphi\left(  H_{r-1}\left(  U\right)  \right)
=-H_{r}\left(  U\right)  .
\]

\end{lemma}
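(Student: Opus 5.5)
We compute both sides explicitly in terms of a basis vector of $U$. Write $U=\operatorname*{span}\left(  v\right)$ with $v\in\mathbf{A}$ nonzero. Then $\left(  v\right)$ is a basis of $U$, so by (\ref{eq.Slam.SlamV1}) we have $H_{r}\left(  U\right)  =S_{\left(  r\right)  }\left(  v\right)$ for every $r\in\mathbb{N}$. With $n=1$ we have $\delta=\left(  0\right)$, so the alternant $A_{\delta}$ is the $1\times1$ determinant $v^{q^{0}}=v$. Likewise $A_{\left(  r\right)  +\delta}=v^{q^{r}}$. The quotient, taken universally in $F\left[  x_{1}\right]$, is $x_{1}^{q^{r}}/x_{1}=x_{1}^{q^{r}-1}$, and substituting $v$ for $x_{1}$ gives
\[
H_{r}\left(  U\right)  =v^{q^{r}-1}\qquad\text{for all }r\in\mathbb{N}.
\]
This formula remains valid when $r=0$, since then both sides equal $1$.

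Next, Lemma \ref{lem.pispanv} gives $\pi\left(  U\right)  =-v^{q-1}$. Since $r\geq1$, the formula above applies to $r-1\in\mathbb{N}$, and together with (\ref{eq.phiia}) it yields
\[
\varphi\left(  H_{r-1}\left(  U\right)  \right)  =\left(  v^{q^{r-1}-1}\right)  ^{q}=v^{q^{r}-q}.
\]
Multiplying the two gives
\[
\pi\left(  U\right)  \cdot\varphi\left(  H_{r-1}\left(  U\right)  \right)  =-v^{q-1}v^{q^{r}-q}=-v^{q^{r}-1}=-H_{r}\left(  U\right)  ,
\]
which is the claim.

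One point needs care. If $\mathbf{A}$ is not an integral domain, $v$ may be a zero divisor, so we cannot divide by $v$ inside $\mathbf{A}$. This does not affect the argument, because the quotient $A_{\lambda+\delta}/A_{\delta}$ is defined universally: we divide in the polynomial ring and only then substitute $v$. Hence the computation of $H_{r}\left(  U\right)$ holds in any commutative $F$-algebra. The only other requirement is that $v\neq0$, which Lemma \ref{lem.pispanv} needs and which holds because $U$ is $1$-dimensional. With both of these settled, the proof reduces to the routine computation above.
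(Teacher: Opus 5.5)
Your proof is correct, but it takes a different route from the paper. The paper never computes $H_r(U)$ in coordinates. It quotes Macdonald's product formula $H_r(U)=(-1)^r\prod_{j=1}^r\varphi^{j-1}\pi(U)$, taken from the first equality in his proof of (7.22). It then applies $\varphi$ to the same formula for $H_{r-1}(U)$, which shifts the factors to $j=2,\ldots,r$. Multiplying by $\pi(U)=\varphi^{0}\pi(U)$ restores the missing $j=1$ factor, and the sign $(-1)^{r-1}=-(-1)^r$ supplies the minus.

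You instead evaluate the $1\times1$ alternants directly to get $H_r(U)=v^{q^r-1}$, and you take $\pi(U)=-v^{q-1}$ from Lemma \ref{lem.pispanv}. The exponent identity $(q-1)+(q^r-q)=q^r-1$ then does the telescoping that the paper does with products.

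Your argument is self-contained: it uses only the definition of $S_\lambda$ and a lemma already proved in the paper, not an external citation. It also makes visible that no hypothesis on $\mathbf{A}$ is needed. It even re-derives Macdonald's product formula, since $\varphi^{j-1}(-v^{q-1})=-v^{(q-1)q^{j-1}}$, and the resulting signs cancel against $(-1)^r$. The paper's version, in exchange, is basis-free and stays entirely in terms of $\pi(U)$ and $\varphi$. That matches how these quantities are combined afterwards with Lemma \ref{lem.pi1} in the proof of Theorem \ref{thm.7.24}.
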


\begin{proof}
The first equality in \cite[proof of (7.22)]{Macdon92} says that%
\begin{equation}
H_{r}\left(  U\right)  =\left(  -1\right)  ^{r}\prod_{j=1}^{r}\varphi^{j-1}%
\pi\left(  U\right)  . \label{pf.lem.Hri1.1}%
\end{equation}
The same argument (applied to $r-1$ instead of $r$) yields%
\[
H_{r-1}\left(  U\right)  =\left(  -1\right)  ^{r-1}\prod_{j=1}^{r-1}%
\varphi^{j-1}\pi\left(  U\right)  .
\]
Applying the map $\varphi$ to this equality, we find%
\begin{align*}
\varphi\left(  H_{r-1}\left(  U\right)  \right)   &  =\varphi\left(  \left(
-1\right)  ^{r-1}\prod_{j=1}^{r-1}\varphi^{j-1}\pi\left(  U\right)  \right) \\
&  =\left(  -1\right)  ^{r-1}\prod_{j=1}^{r-1}\underbrace{\varphi\left(
\varphi^{j-1}\pi\left(  U\right)  \right)  }_{=\varphi^{j}\pi\left(  U\right)
}\ \ \ \ \ \ \ \ \ \ \left(  \text{since }\varphi\text{ is a ring
morphism}\right) \\
&  =\left(  -1\right)  ^{r-1}\prod_{j=1}^{r-1}\varphi^{j}\pi\left(  U\right)
=\left(  -1\right)  ^{r-1}\prod_{j=2}^{r}\varphi^{j-1}\pi\left(  U\right)
\end{align*}
(here, we have substituted $j-1$ for $j$ in the product). Thus,%
\begin{align*}
\pi\left(  U\right)  \cdot\varphi\left(  H_{r-1}\left(  U\right)  \right)   &
=\underbrace{\pi\left(  U\right)  }_{\substack{=\varphi^{0}\pi\left(
U\right)  \\=\varphi^{1-1}\pi\left(  U\right)  }}\cdot\underbrace{\left(
-1\right)  ^{r-1}}_{=-\left(  -1\right)  ^{r}}\prod_{j=2}^{r}\varphi^{j-1}%
\pi\left(  U\right) \\
&  =-\left(  -1\right)  ^{r}\cdot\underbrace{\varphi^{1-1}\pi\left(  U\right)
\cdot\prod_{j=2}^{r}\varphi^{j-1}\pi\left(  U\right)  }_{=\prod_{j=1}%
^{r}\varphi^{j-1}\pi\left(  U\right)  }\\
&  =-\underbrace{\left(  -1\right)  ^{r}\prod_{j=1}^{r}\varphi^{j-1}\pi\left(
U\right)  }_{\substack{=H_{r}\left(  U\right)  \\\text{(by
(\ref{pf.lem.Hri1.1}))}}}=-H_{r}\left(  U\right)  .
\end{align*}
This proves Lemma \ref{lem.Hri1}.
\end{proof}

\begin{lemma}
\label{lem.fullhouse-1}Let $n$ be a positive integer. Let $V$ be an
$n$-dimensional $F$-vector subspace of $\mathbf{A}$.

Let $\lambda=\left(  \lambda_{1},\lambda_{2},\ldots,\lambda_{n}\right)  $ be a
partition with $\lambda_{n}\geq1$. Let $\lambda\ominus1$ be the partition
$\left(  \lambda_{1}-1,\lambda_{2}-1,\ldots,\lambda_{n}-1\right)  $. Then,%
\[
S_{\lambda}\left(  V\right)  =\left(  -1\right)  ^{n}\pi\left(  V\right)
\cdot\left(  S_{\lambda\ominus1}\left(  V\right)  \right)  ^{q}.
\]

\end{lemma}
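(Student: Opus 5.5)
We will work directly from the definition $S_{\lambda}(V)=A_{\lambda+\delta}/A_{\delta}$, computed universally in $F[x_1,\ldots,x_n]$ for a basis $(x_1,\ldots,x_n)$ of $V$. Since $\lambda_n\geq 1$, every exponent $\lambda_j+n-j$ of $\lambda+\delta$ is $\geq 1$. So each entry $x_i^{q^{\lambda_j+n-j}}$ of the matrix defining $A_{\lambda+\delta}$ equals $\left(x_i^{q^{(\lambda_j-1)+n-j}}\right)^q$. Because the Frobenius $a\mapsto a^q$ is a ring endomorphism, it commutes with taking determinants. Applying it entrywise therefore gives
\[
A_{\lambda+\delta}=\left(A_{(\lambda\ominus1)+\delta}\right)^{q}.
\]

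Next we rewrite the right hand side in terms of $A_\delta$. We have $A_{(\lambda\ominus1)+\delta}=S_{\lambda\ominus1}(x_1,\ldots,x_n)\cdot A_{\delta}$ as polynomials, and hence
\[
A_{\lambda+\delta}=\left(S_{\lambda\ominus1}\right)^{q}\cdot A_{\delta}^{q}.
\]
Dividing by $A_{\delta}$ in the domain $F[x_1,\ldots,x_n]$ yields
\[
S_{\lambda}=\left(S_{\lambda\ominus1}\right)^{q}\cdot A_{\delta}^{q-1}.
\]
It therefore remains to show the identity $A_{\delta}^{q-1}=(-1)^n\pi(V)$, first universally and then after substitution.

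For this we use the Moore determinant formula. Up to the column order, $A_\delta$ is the Moore determinant $\det\left(x_i^{q^{j-1}}\right)$, and it factors as
\[
A_{\delta}=c\prod_{i=1}^{n}\ \prod_{\substack{(c_1,\ldots,c_{i-1})\in F^{i-1}}}\left(x_i+c_1x_1+\cdots+c_{i-1}x_{i-1}\right)
\]
with $c\in\{\pm1\}$. This is the content of Macdonald's (7.20) relating $\pi(V)$ to $E_n(V)=S_{(1^n)}(V)=A_{\delta+(1^n)}/A_\delta$. Raising to the power $q-1$ turns each factor $x_i+\cdots$ into the product over the $q-1$ nonzero scalar multiples $\alpha(x_i+\cdots)$, up to the factor $\prod_{\alpha\in F^\times}\alpha=-1$ from Wilson's theorem; this is the computation in Lemma \ref{lem.pispanv}. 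The vectors $\alpha(x_i+c_1x_1+\cdots)$, with $\alpha\neq0$ and $i$ ranging over $[n]$, run through each nonzero vector of $V$ exactly once, classified by the largest index $i$ with a nonzero coefficient. Hence
\[
A_\delta^{q-1}=c^{q-1}(-1)^n\pi(V).
\]
When $q$ is odd, $q-1$ is even and $c^{q-1}=1$. When $q$ is even, signs do not matter in characteristic $2$. This gives $A_\delta^{q-1}=(-1)^n\pi(V)$.

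A quicker alternative to the factorization argument is to combine two earlier facts. Macdonald's (7.20) gives $\pi(V)=(-1)^nE_n(V)$. On the other hand, $E_n(V)=A_{\delta+(1^n)}/A_\delta=A_\delta^q/A_\delta=A_\delta^{q-1}$, using the Frobenius step from the first paragraph with $\lambda=(1^n)$. This yields the identity directly, and I would use this route.

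Finally, every identity above holds universally in $F[x_1,\ldots,x_n]$. We then substitute the basis of $V$, which is legitimate because both sides are polynomials and substitution is a ring morphism. The main obstacle is the identity $A_\delta^{q-1}=(-1)^n\pi(V)$, specifically keeping the sign conventions consistent. The second route reduces it to (7.20) and settles it.
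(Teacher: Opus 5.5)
Your proposal is correct, and the route you say you would use is essentially the paper's proof: Frobenius commutes with determinants, giving $A_{\lambda+\delta}=\varphi\bigl(A_{(\lambda\ominus1)+\delta}\bigr)$, and the leftover factor $\varphi(A_\delta)/A_\delta=A_\delta^{q-1}=E_n$ equals $(-1)^n\pi$ by Macdonald's (7.20), after which one specializes to a basis of $V$. The one detail to watch is how $\pi$ specializes. You can apply (7.20) directly to $V$ after substituting, as your notation $E_n(V)$, $\pi(V)$ suggests. If instead you apply it to $W=\operatorname{span}(x_1,\ldots,x_n)$, as the paper does, you must note that the substitution sends $\pi(W)$ to $\pi(V)$ because it restricts to a bijection $W\setminus\{0\}\to V\setminus\{0\}$; its being a ring morphism alone does not give this.
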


\begin{proof}
More generally, for any $n$-tuple $\beta=\left(  \beta_{1},\beta_{2}%
,\ldots,\beta_{n}\right)  \in\mathbb{N}^{n}$, we set $\beta\ominus1:=\left(
\beta_{1}-1,\beta_{2}-1,\ldots,\beta_{n}-1\right)  $. This is an $n$-tuple of
integers, but belongs to $\mathbb{N}^{n}$ if all entries of $\beta$ are
$\geq1$. In particular, $\lambda\ominus1\in\mathbb{N}^{n}$, since each
$j\in\left[  n\right]  $ satisfies $\lambda_{j}\geq\lambda_{n}\geq1$.

Let $\mathbf{B}$ be the polynomial ring $F\left[  x_{1},x_{2},\ldots
,x_{n}\right]  $. Let $W$ be the $F$-vector subspace $\operatorname*{span}%
\left(  x_{1},x_{2},\ldots,x_{n}\right)  $ of $\mathbf{B}$. Then, the
definition of $S_{\lambda}\left(  x_{1},x_{2},\ldots,x_{n}\right)  $ yields
\begin{equation}
S_{\lambda}\left(  x_{1},x_{2},\ldots,x_{n}\right)  =A_{\lambda+\delta
}/A_{\delta}, \label{pf.lem.fullhouse-1.S1}%
\end{equation}
where the $\alpha$-alternants $A_{\alpha}$ for all $\alpha\in\mathbb{N}^{n}$
are defined in (\ref{eq.Aalpha=}). Similarly,%
\begin{equation}
S_{\lambda\ominus1}\left(  x_{1},x_{2},\ldots,x_{n}\right)  =A_{\left(
\lambda\ominus1\right)  +\delta}/A_{\delta}. \label{pf.lem.fullhouse-1.S2}%
\end{equation}

Now, define an $n$-tuple $\beta=\left(  \beta_{1},\beta_{2},\ldots,\beta
_{n}\right)  \in\mathbb{N}^{n}$ by $\beta:=\lambda+\delta$, so that $\beta
_{j}=\lambda_{j}+n-j$ for each $j\in\left[  n\right]  $. From $\lambda
+\delta=\beta$, we obtain%
\begin{equation}
A_{\lambda+\delta}=A_{\beta}=\det\left(  x_{i}^{q^{\beta_{j}}}\right)
_{i,j\in\left[  n\right]  } \label{pf.lem.fullhouse-1.1}%
\end{equation}
by (\ref{eq.Aalpha=}). Moreover, each $j\in\left[  n\right]  $ satisfies
$\beta_{j}=\underbrace{\lambda_{j}}_{\geq1}+\underbrace{n-j}_{\geq0}\geq1$.
Thus, each $i,j\in\left[  n\right]  $ satisfy%
\[
x_{i}^{q^{\beta_{j}}}=x_{i}^{q^{\beta_{j}-1}q}=\left(  x_{i}^{q^{\beta_{j}-1}%
}\right)  ^{q}=\varphi\left(  x_{i}^{q^{\beta_{j}-1}}\right)
\]
(by the definition of $\varphi$). Hence, we can rewrite
(\ref{pf.lem.fullhouse-1.1}) as
\begin{align}
A_{\lambda+\delta}  &  =\det\left(  \varphi\left(  x_{i}^{q^{\beta_{j}-1}%
}\right)  \right)  _{i,j\in\left[  n\right]  }\nonumber\\
&  =\varphi\left(  \det\left(  \left(  x_{i}^{q^{\beta_{j}-1}}\right)
_{i,j\in\left[  n\right]  }\right)  \right)  \label{pf.lem.fullhouse-1.2}%
\end{align}
(since $\varphi$ is a ring morphism and thus commutes with determinants).

However, $\left(  \lambda\ominus1\right)  +\delta=\underbrace{\left(
\lambda+\delta\right)  }_{=\beta}\ominus\,1=\beta\ominus1$. Hence,
\[
A_{\left(  \lambda\ominus1\right)  +\delta}=A_{\beta\ominus1}=\det\left(
\left(  x_{i}^{q^{\beta_{j}-1}}\right)  _{i,j\in\left[  n\right]  }\right)
\ \ \ \ \ \ \ \ \ \ \left(  \text{by (\ref{eq.Aalpha=})}\right)  .
\]
In view of this, we can rewrite (\ref{pf.lem.fullhouse-1.2}) as%
\[
A_{\lambda+\delta}=\varphi\left(  A_{\left(  \lambda\ominus1\right)  +\delta
}\right)  .
\]
Thus, (\ref{pf.lem.fullhouse-1.S1}) becomes%
\begin{align}
S_{\lambda}\left(  x_{1},x_{2},\ldots,x_{n}\right)   &
=\underbrace{A_{\lambda+\delta}}_{=\varphi\left(  A_{\left(  \lambda
\ominus1\right)  +\delta}\right)  }/A_{\delta}=\varphi\left(  A_{\left(
\lambda\ominus1\right)  +\delta}\right)  /A_{\delta}\nonumber\\
&  =\dfrac{\varphi\left(  A_{\left(  \lambda\ominus1\right)  +\delta}\right)
}{\varphi\left(  A_{\delta}\right)  }\cdot\dfrac{\varphi\left(  A_{\delta
}\right)  }{A_{\delta}}. \label{pf.lem.fullhouse-1.4}%
\end{align}

Applying the same argument to the partition $\left(  1^{n}\right)  =\left(
\underbrace{1,1,\ldots,1}_{n\text{ times}}\right)  $ instead of $\lambda$, we
obtain%
\begin{align*}
S_{\left(  1^{n}\right)  }\left(  x_{1},x_{2},\ldots,x_{n}\right)   &
=\dfrac{\varphi\left(  A_{\left(  \left(  1^{n}\right)  \ominus1\right)
+\delta}\right)  }{\varphi\left(  A_{\delta}\right)  }\cdot\dfrac
{\varphi\left(  A_{\delta}\right)  }{A_{\delta}}\\
&  =\underbrace{\dfrac{\varphi\left(  A_{\delta}\right)  }{\varphi\left(
A_{\delta}\right)  }}_{=1}\cdot\,\dfrac{\varphi\left(  A_{\delta}\right)
}{A_{\delta}}\ \ \ \ \ \ \ \ \ \ \left(  \text{since }\underbrace{\left(
\left(  1^{n}\right)  \ominus1\right)  }_{=0}+\,\delta=\delta\right) \\
&  =\dfrac{\varphi\left(  A_{\delta}\right)  }{A_{\delta}}.
\end{align*}
Thus,%
\begin{align*}
\dfrac{\varphi\left(  A_{\delta}\right)  }{A_{\delta}}  &  =S_{\left(
1^{n}\right)  }\left(  x_{1},x_{2},\ldots,x_{n}\right)  =S_{\left(
1^{n}\right)  }\left(  W\right) \\
&  \ \ \ \ \ \ \ \ \ \ \ \ \ \ \ \ \ \ \ \ \left(
\begin{array}
[c]{c}%
\text{by the definition of }S_{\left(  1^{n}\right)  }\left(  W\right)
\text{,}\\
\text{since }\left(  x_{1},x_{2},\ldots,x_{n}\right)  \text{ is a basis of }W
\end{array}
\right) \\
&  =E_{n}\left(  W\right)  \ \ \ \ \ \ \ \ \ \ \left(  \text{by the definition
of }E_{n}\left(  W\right)  \right) \\
&  =\left(  -1\right)  ^{n}\pi\left(  W\right)
\end{align*}
(since \cite[(7.20)]{Macdon92} says that $\pi\left(  W\right)  =\left(
-1\right)  ^{n}E_{n}\left(  W\right)  $). Substituting this into
(\ref{pf.lem.fullhouse-1.4}), we obtain%
\[
S_{\lambda}\left(  x_{1},x_{2},\ldots,x_{n}\right)  =\dfrac{\varphi\left(
A_{\left(  \lambda\ominus1\right)  +\delta}\right)  }{\varphi\left(
A_{\delta}\right)  }\cdot\left(  -1\right)  ^{n}\pi\left(  W\right)  .
\]
In view of%
\begin{align*}
\dfrac{\varphi\left(  A_{\left(  \lambda\ominus1\right)  +\delta}\right)
}{\varphi\left(  A_{\delta}\right)  }  &  =\dfrac{A_{\left(  \lambda
\ominus1\right)  +\delta}^{q}}{A_{\delta}^{q}}\ \ \ \ \ \ \ \ \ \ \left(
\text{by the definition of }\varphi\right) \\
&  =\left(  \underbrace{A_{\left(  \lambda\ominus1\right)  +\delta}/A_{\delta
}}_{\substack{=S_{\lambda\ominus1}\left(  x_{1},x_{2},\ldots,x_{n}\right)
\\\text{(by (\ref{pf.lem.fullhouse-1.S2}))}}}\right)  ^{q}\\
&  =\left(  S_{\lambda\ominus1}\left(  x_{1},x_{2},\ldots,x_{n}\right)
\right)  ^{q},
\end{align*}
we can rewrite this further as%
\begin{align}
S_{\lambda}\left(  x_{1},x_{2},\ldots,x_{n}\right)   &  =\left(
S_{\lambda\ominus1}\left(  x_{1},x_{2},\ldots,x_{n}\right)  \right)  ^{q}%
\cdot\left(  -1\right)  ^{n}\pi\left(  W\right) \nonumber\\
&  =\left(  -1\right)  ^{n}\pi\left(  W\right)  \cdot\left(  S_{\lambda
\ominus1}\left(  x_{1},x_{2},\ldots,x_{n}\right)  \right)  ^{q}.
\label{pf.lem.fullhouse-1.Sgen}%
\end{align}

Now, pick a basis $\left(  v_{1},v_{2},\ldots,v_{n}\right)  $ of the
$n$-dimensional $F$-vector space $V$. Let us substitute $v_{1},v_{2}%
,\ldots,v_{n}$ for $x_{1},x_{2},\ldots,x_{n}$ on both sides of
(\ref{pf.lem.fullhouse-1.Sgen}). This substitution transforms $S_{\lambda
}\left(  x_{1},x_{2},\ldots,x_{n}\right)  $ into $S_{\lambda}\left(  V\right)
$ (since $S_{\lambda}\left(  V\right)  =S_{\lambda}\left(  v_{1},v_{2}%
,\ldots,v_{n}\right)  $ was defined by substituting $v_{1},v_{2},\ldots,v_{n}$
for $x_{1},x_{2},\ldots,x_{n}$ in the polynomial $A_{\lambda+\delta}%
/A_{\delta}=S_{\lambda}\left(  x_{1},x_{2},\ldots,x_{n}\right)  $), and
transforms $S_{\lambda\ominus1}\left(  x_{1},x_{2},\ldots,x_{n}\right)  $ into
$S_{\lambda\ominus1}\left(  V\right)  $ (for analogous reasons); furthermore,
it transforms $\pi\left(  W\right)  $ into $\pi\left(  V\right)  $ (since this
substitution turns the $F$-linear combinations of the $x_{1},x_{2}%
,\ldots,x_{n}$ into the corresponding $F$-linear combinations of $v_{1}%
,v_{2},\ldots,v_{n}$, and is injective\footnote{In more detail: Let
$\psi:\mathbf{B}\rightarrow\mathbf{A}$ be the map that substitutes
$v_{1},v_{2},\ldots,v_{n}$ for $x_{1},x_{2},\ldots,x_{n}$ in any given
polynomial $f\in\mathbf{B}=F\left[  x_{1},x_{2},\ldots,x_{n}\right]  $. Then,
$\psi$ is an $F$-algebra morphism, and sends $x_{1},x_{2},\ldots,x_{n}$ to
$v_{1},v_{2},\ldots,v_{n}$. Hence, $\psi$ restricts to an isomorphism from the
$F$-vector space $W$ to $V$ (since $\psi$ sends the basis $\left(  x_{1}%
,x_{2},\ldots,x_{n}\right)  $ of $W$ to the basis $\left(  v_{1},v_{2}%
,\ldots,v_{n}\right)  $ of $V$). Therefore, $\psi$ also restricts to a
bijection from $W\setminus\left\{  0\right\}  $ to $V\setminus\left\{
0\right\}  $. In other words, the map%
\begin{align}
W\setminus\left\{  0\right\}   &  \rightarrow V\setminus\left\{  0\right\}
,\nonumber\\
u  &  \mapsto\psi\left(  u\right)  \label{pf.lem.fullhouse-1.fnc.1}%
\end{align}
is a bijection.
\par
But the definition of $\pi\left(  V\right)  $ yields $\pi\left(  V\right)
=\prod_{u\in V\setminus\left\{  0\right\}  }u$. Similarly, $\pi\left(
W\right)  =\prod_{u\in W\setminus\left\{  0\right\}  }u$. Applying the map
$\psi$ to the latter equality, we find%
\begin{align*}
\psi\left(  \pi\left(  W\right)  \right)   &  =\psi\left(  \prod_{u\in
W\setminus\left\{  0\right\}  }u\right)  =\prod_{u\in W\setminus\left\{
0\right\}  }\psi\left(  u\right)  \ \ \ \ \ \ \ \ \ \ \left(  \text{since
}\psi\text{ is an }F\text{-algebra morphism}\right) \\
&  =\prod_{u\in V\setminus\left\{  0\right\}  }u
\end{align*}
(here, we have substituted $u$ for $\psi\left(  u\right)  $ in the product,
since the map (\ref{pf.lem.fullhouse-1.fnc.1}) is a bijection). Comparing this
with $\pi\left(  V\right)  =\prod_{u\in V\setminus\left\{  0\right\}  }u$, we
obtain $\psi\left(  \pi\left(  W\right)  \right)  =\pi\left(  V\right)  $. In
other words, the substitution of $v_{1},v_{2},\ldots,v_{n}$ for $x_{1}%
,x_{2},\ldots,x_{n}$ transforms $\pi\left(  W\right)  $ into $\pi\left(
V\right)  $ (since this substitution is precisely $\psi$).}). Hence, the
result of applying this substitution to (\ref{pf.lem.fullhouse-1.Sgen}) is%
\[
S_{\lambda}\left(  V\right)  =\left(  -1\right)  ^{n}\pi\left(  V\right)
\cdot\left(  S_{\lambda\ominus1}\left(  V\right)  \right)  ^{q}.
\]
This proves Lemma \ref{lem.fullhouse-1}.
\end{proof}

\subsection{Proof of the formula}

\begin{proof}
[Proof of Theorem \ref{thm.7.24}.]We use strong induction on $\left\vert
\lambda\right\vert +n$. The \textit{base case} ($\left\vert \lambda\right\vert
+n=0$) is entirely trivial (because in this case, $n=0$, so that $V=0$ and
$\lambda=\varnothing$). For the \textit{induction step}, we fix a positive
integer $N$. We assume (as the induction hypothesis) that Theorem
\ref{thm.7.24} holds whenever $\left\vert \lambda\right\vert +n<N$. We must
now prove Theorem \ref{thm.7.24} whenever $\left\vert \lambda\right\vert +n=N$.

So we fix some $n$ and $\lambda$ satisfying $\left\vert \lambda\right\vert
+n=N$. Then, $\left\vert \lambda\right\vert +n=N>0$, so that $n>0$ (since
$n=0$ would force $\lambda=\varnothing$ and thus $\left\vert \lambda
\right\vert +n=\left\vert \varnothing\right\vert +0=\left\vert \varnothing
\right\vert =0$). Hence, $\lambda_{n}$ is well-defined. We are in one of the
following two cases:

\textit{Case 1:} We have $\lambda_{n}=0$.

\textit{Case 2:} We have $\lambda_{n}\geq1$.

Let us consider Case 1. In this case, we have $\lambda_{n}=0$. Thus, the
partition $\lambda$ has length $<n=\dim V$. Hence, Theorem \ref{thm.7.25}
yields%
\begin{equation}
S_{\lambda}\left(  V\right)  =\sum_{L\subseteq V\text{ line}}S_{\lambda
}\left(  V\sslash L\right)  . \label{pf.thm.7.24.c1.1}%
\end{equation}

Now, let $L$ be any line in $V$. Then, $\dim L=1$. But what we said about
internal quotients long ago\footnote{specifically: the fact that if
$\mathbf{A}$ is an integral domain, and if $U\subseteq V$ are two
finite-dimensional $F$-vector subspaces of $\mathbf{A}$, then the internal
quotient $V\sslash U$ is isomorphic to the actual quotient $V/U$} entails that
$V\sslash L\cong V/L$ as $F$-vector spaces, via the canonical isomorphism%
\begin{align}
V/L  &  \rightarrow V\sslash L,\nonumber\\
\overline{v}  &  \mapsto\widetilde{f}_{L}\left(  v\right)  =f_{L}\left(
v\right)  . \label{pf.thm.7.24.c1.iso}%
\end{align}
Thus, $\dim\left(  V\sslash L\right)  =\dim\left(  V/L\right)
=\underbrace{\dim V}_{=n}-\underbrace{\dim L}_{=1}=n-1$. In other words, the
vector space $V\sslash L$ is $\left(  n-1\right)  $-dimensional. Hence, by the
induction hypothesis, we can apply Theorem \ref{thm.7.24} to $V\sslash L$ and
$n-1$ instead of $V$ and $n$ (since $\lambda_{n}=0$ yields $\lambda=\left(
\lambda_{1},\lambda_{2},\ldots,\lambda_{n-1}\right)  $). We thus obtain%
\begin{equation}
S_{\lambda}\left(  V\sslash L\right)  =\sum_{\substack{V\sslash L=W_{0}%
>W_{1}>\cdots>W_{n-1}=0\\\text{is a complete flag in }V\sslash L}%
}\ \ \prod_{i=1}^{n-1}H_{\lambda_{i}}\left(  W_{i-1}\sslash W_{i}\right)  .
\label{pf.thm.7.24.c1.2}%
\end{equation}

However, it is well-known (the \textquotedblleft lattice isomorphism
theorem\textquotedblright) that there is an inclusion-respecting
bijection\footnote{Here and in the following, the word \textquotedblleft
subspace\textquotedblright\ means \textquotedblleft$F$-vector
subspace\textquotedblright. A map $\phi$ whose domain and target consist of
sets is said to be \emph{inclusion-respecting} if it has the property that if
two sets $A$ and $B$ in its domain satisfy $A\subseteq B$, then $\phi\left(
A\right)  \subseteq\phi\left(  B\right)  $.}%
\begin{align*}
\left\{  \text{subspaces }U\text{ of }V\text{ such that }V\geq U\geq
L\right\}   &  \rightarrow\left\{  \text{subspaces of }V/L\right\}  ,\\
U  &  \mapsto U/L,
\end{align*}
which furthermore reduces the dimension of the subspace by $1$ (meaning that
$\dim\left(  U/L\right)  =\dim U-1$ for any $U$).

We can use the isomorphism (\ref{pf.thm.7.24.c1.iso}) to replace $V/L$ by
$V\sslash L$ here; thus we obtain an inclusion-respecting bijection%
\begin{align*}
\left\{  \text{subspaces }U\text{ of }V\text{ such that }V\geq U\geq
L\right\}   &  \rightarrow\left\{  \text{subspaces of }V\sslash L\right\}  ,\\
U  &  \mapsto\widetilde{f}_{L}\left(  U\right)  =U\sslash L,
\end{align*}
which again reduces the dimension of the subspace by $1$ (since $\dim\left(
U\sslash L\right)  =\dim\left(  U/L\right)  =\dim U-1$ for any $U$).

Since this bijection is inclusion-respecting and reduces dimension by $1$, we
thus obtain a bijection%
\begin{align*}
&  \left\{  \text{complete flags }V=V_{0}>V_{1}>\cdots>V_{n-1}=L>0\text{ in
}V\right\} \\
&  \rightarrow\left\{  \text{complete flags }V\sslash L=W_{0}>W_{1}%
>\cdots>W_{n-1}=0\text{ in }V\sslash L\right\}
\end{align*}
that sends each flag $V=V_{0}>V_{1}>\cdots>V_{n-1}=L>0$ to the flag
$V\sslash
L=W_{0}>W_{1}>\cdots>W_{n-1}=0$ given by $W_{i}=V_{i}\sslash L$. Using this
bijection to reindex the sum in (\ref{pf.thm.7.24.c1.2}), we can rewrite
(\ref{pf.thm.7.24.c1.2}) as%
\begin{equation}
S_{\lambda}\left(  V\sslash L\right)  =\sum_{\substack{V=V_{0}>V_{1}%
>\cdots>V_{n-1}=L>0\\\text{is a complete flag in }V}}\ \ \prod_{i=1}%
^{n-1}H_{\lambda_{i}}\left(  \left(  V_{i-1}\sslash L\right)  \sslash\left(
V_{i}\sslash L\right)  \right)  .\ \ \ \ \ \ \ \ \ \ \label{pf.thm.7.24.c1.3}%
\end{equation}

However, \cite[(7.16)]{Macdon92} says that if $U,V,T$ are three subspaces of
$\mathbf{A}$ satisfying $T\leq U\leq V$, then%
\[
V\sslash U=\left(  V\sslash T\right)  \sslash \left(  U\sslash T\right)  .
\]
Hence, for each $i\in\left[  n-1\right]  $ and each complete flag
$V=V_{0}>V_{1}>\cdots>V_{n-1}=L>0$ in $V$, we obtain%
\begin{equation}
V_{i-1}\sslash V_{i}=\left(  V_{i-1}\sslash L\right)  \sslash \left(
V_{i}\sslash L\right)  \label{pf.thm.7.24.c1.4}%
\end{equation}
(since $L\leq V_{i}\leq V_{i-1}$). Thus, (\ref{pf.thm.7.24.c1.3}) becomes%
\begin{align}
S_{\lambda}\left(  V\sslash L\right)   &  =\sum_{\substack{V=V_{0}%
>V_{1}>\cdots>V_{n-1}=L>0\\\text{is a complete flag in }V}}\ \ \prod
_{i=1}^{n-1}H_{\lambda_{i}}\left(  \underbrace{\left(  V_{i-1}%
\sslash L\right)  \sslash \left(  V_{i}\sslash L\right)  }_{\substack{=V_{i-1}%
\sslash V_{i}\\\text{(by (\ref{pf.thm.7.24.c1.4}))}}}\right) \nonumber\\
&  =\sum_{\substack{V=V_{0}>V_{1}>\cdots>V_{n-1}=L>0\\\text{is a complete flag
in }V}}\ \ \prod_{i=1}^{n-1}H_{\lambda_{i}}\left(  V_{i-1}\sslash V_{i}%
\right)  . \label{pf.thm.7.24.c1.5}%
\end{align}

Forget that we fixed $L$. We thus have proved (\ref{pf.thm.7.24.c1.5}) for
each line $L$ in $V$.

Substituting (\ref{pf.thm.7.24.c1.5}) into (\ref{pf.thm.7.24.c1.1}), we find%
\begin{align}
S_{\lambda}\left(  V\right)   &  =\underbrace{\sum_{L\subseteq V\text{ line}%
}\ \ \sum_{\substack{V=V_{0}>V_{1}>\cdots>V_{n-1}=L>0\\\text{is a complete
flag in }V}}}_{\substack{=\sum_{\substack{V=V_{0}>V_{1}>\cdots>V_{n-1}%
>0\\\text{is a complete flag in }V}}\\=\sum_{\substack{V=V_{0}>V_{1}%
>\cdots>V_{n}=0\\\text{is a complete flag in }V}}\\\text{(here, we have set
}V_{n}=0\text{ for the sake of uniformity)}}}\ \ \prod_{i=1}^{n-1}%
H_{\lambda_{i}}\left(  V_{i-1}\sslash V_{i}\right) \nonumber\\
&  =\sum_{\substack{V=V_{0}>V_{1}>\cdots>V_{n}=0\\\text{is a complete flag in
}V}}\ \ \prod_{i=1}^{n-1}H_{\lambda_{i}}\left(  V_{i-1}\sslash V_{i}\right)  .
\label{pf.thm.7.24.c1.6}%
\end{align}

However, $\lambda_{n}=0$, and thus $H_{\lambda_{n}}\left(  V_{n-1}%
\sslash V_{n}\right)  =H_{0}\left(  V_{n-1}\sslash V_{n}\right)  =1$ (since
$H_{0}\left(  U\right)  =1$ for any subspace $U$ of $\mathbf{A}$). Hence, the
product $\prod_{i=1}^{n-1}H_{\lambda_{i}}\left(  V_{i-1}\sslash V_{i}\right)
$ does not change if we extend it to encompass $i=n$ (since the extra factor
that it thus gains is $H_{\lambda_{n}}\left(  V_{n-1}\sslash V_{n}\right)
=1$). Thus, by extending this product in this way, we can rewrite
(\ref{pf.thm.7.24.c1.6}) as%
\[
S_{\lambda}\left(  V\right)  =\sum_{\substack{V=V_{0}>V_{1}>\cdots
>V_{n}=0\\\text{is a complete flag in }V}}\ \ \prod_{i=1}^{n}H_{\lambda_{i}%
}\left(  V_{i-1}\sslash V_{i}\right)  .
\]
In other words, Theorem \ref{thm.7.24} holds for our $n$ and $\lambda$. This
completes the induction step in Case 1.

Let us now consider Case 2. In this case, we have $\lambda_{n}\geq1$. Hence,
Lemma \ref{lem.fullhouse-1} yields
\begin{equation}
S_{\lambda}\left(  V\right)  =\left(  -1\right)  ^{n}\pi\left(  V\right)
\cdot\left(  S_{\lambda\ominus1}\left(  V\right)  \right)  ^{q},
\label{pf.pf.thm.7.24.c2.4}%
\end{equation}
where $\lambda\ominus1$ has been defined in Lemma \ref{lem.fullhouse-1}. Since
$\left\vert \lambda\ominus1\right\vert =\left\vert \lambda\right\vert
-n<\left\vert \lambda\right\vert $ (because $n>0$), we have $\left\vert
\lambda\ominus1\right\vert +n<\left\vert \lambda\right\vert +n$. Thus, the
induction hypothesis shows that Theorem \ref{thm.7.24} holds for
$\lambda\ominus1$ instead of $\lambda$. That is, we have%
\begin{equation}
S_{\lambda\ominus1}\left(  V\right)  =\sum_{\substack{V=V_{0}>V_{1}%
>\cdots>V_{n}=0\\\text{is a complete flag in }V}}\ \ \prod_{i=1}^{n}%
H_{\lambda_{i}-1}\left(  V_{i-1}\sslash V_{i}\right)
\label{pf.pf.thm.7.24.c2.5}%
\end{equation}
(since $\lambda\ominus1=\left(  \lambda_{1}-1,\lambda_{2}-1,\ldots,\lambda
_{n}-1\right)  $). Taking this equality to the $q$-th power, we find%
\begin{align*}
\left(  S_{\lambda\ominus1}\left(  V\right)  \right)  ^{q}  &  =\left(
\sum_{\substack{V=V_{0}>V_{1}>\cdots>V_{n}=0\\\text{is a complete flag in }%
V}}\ \ \prod_{i=1}^{n}H_{\lambda_{i}-1}\left(  V_{i-1}\sslash V_{i}\right)
\right)  ^{q}\\
&  =\varphi\left(  \sum_{\substack{V=V_{0}>V_{1}>\cdots>V_{n}=0\\\text{is a
complete flag in }V}}\ \ \prod_{i=1}^{n}H_{\lambda_{i}-1}\left(
V_{i-1}\sslash V_{i}\right)  \right) \\
&  \ \ \ \ \ \ \ \ \ \ \ \ \ \ \ \ \ \ \ \ \left(  \text{by the definition of
}\varphi\right) \\
&  =\sum_{\substack{V=V_{0}>V_{1}>\cdots>V_{n}=0\\\text{is a complete flag in
}V}}\ \ \prod_{i=1}^{n}\varphi\left(  H_{\lambda_{i}-1}\left(  V_{i-1}%
\sslash V_{i}\right)  \right) \\
&  \ \ \ \ \ \ \ \ \ \ \ \ \ \ \ \ \ \ \ \ \left(  \text{since }\varphi\text{
is an }F\text{-algebra morphism}\right)  .
\end{align*}
Substituting this into (\ref{pf.pf.thm.7.24.c2.4}), we obtain%
\begin{align*}
S_{\lambda}\left(  V\right)   &  =\left(  -1\right)  ^{n}\pi\left(  V\right)
\cdot\sum_{\substack{V=V_{0}>V_{1}>\cdots>V_{n}=0\\\text{is a complete flag in
}V}}\ \ \prod_{i=1}^{n}\varphi\left(  H_{\lambda_{i}-1}\left(  V_{i-1}%
\sslash V_{i}\right)  \right) \\
&  =\sum_{\substack{V=V_{0}>V_{1}>\cdots>V_{n}=0\\\text{is a complete flag in
}V}}\ \ \underbrace{\left(  -1\right)  ^{n}}_{=\prod_{i=1}^{n}\left(
-1\right)  }\ \ \underbrace{\pi\left(  V\right)  }_{\substack{=\prod_{i=1}%
^{n}\pi\left(  V_{i-1}\sslash V_{i}\right)  \\\text{(by Lemma \ref{lem.pi1})}%
}}\prod_{i=1}^{n}\varphi\left(  H_{\lambda_{i}-1}\left(  V_{i-1}%
\sslash V_{i}\right)  \right) \\
&  =\sum_{\substack{V=V_{0}>V_{1}>\cdots>V_{n}=0\\\text{is a complete flag in
}V}}\ \ \underbrace{\prod_{i=1}^{n}\left(  -1\right)  \cdot\prod_{i=1}^{n}%
\pi\left(  V_{i-1}\sslash V_{i}\right)  \cdot\prod_{i=1}^{n}\varphi\left(
H_{\lambda_{i}-1}\left(  V_{i-1}\sslash V_{i}\right)  \right)  }_{=\prod
_{i=1}^{n}\left(  -\pi\left(  V_{i-1}\sslash V_{i}\right)  \cdot\varphi\left(
H_{\lambda_{i}-1}\left(  V_{i-1}\sslash V_{i}\right)  \right)  \right)  }\\
&  =\sum_{\substack{V=V_{0}>V_{1}>\cdots>V_{n}=0\\\text{is a complete flag in
}V}}\ \ \prod_{i=1}^{n}\left(  -\pi\left(  V_{i-1}\sslash V_{i}\right)
\cdot\varphi\left(  H_{\lambda_{i}-1}\left(  V_{i-1}\sslash V_{i}\right)
\right)  \right)  .
\end{align*}

However, if $V=V_{0}>V_{1}>\cdots>V_{n}=0$ is a complete flag in $V$, and if
$i\in\left[  n\right]  $ is arbitrary, then the $F$-vector subspace
$V_{i-1}\sslash V_{i}$ of $\mathbf{A}$ is $1$-dimensional\footnote{Indeed, the
same reasoning that gave us $\dim\left(  V\sslash L\right)  =\dim\left(
V/L\right)  $ above can be used to show that $\dim\left(  V_{i-1}\sslash
V_{i}\right)  =\dim\left(  V_{i-1}/V_{i}\right)  $. But $\dim\left(
V_{i-1}/V_{i}\right)  =1$ since $V=V_{0}>V_{1}>\cdots>V_{n}=0$ is a complete
flag. Hence, $\dim\left(  V_{i-1}\sslash V_{i}\right)  =\dim\left(
V_{i-1}/V_{i}\right)  =1$, so that $V_{i-1}\sslash V_{i}$ is $1$%
-dimensional.}, and thus satisfies
\[
\pi\left(  V_{i-1}\sslash V_{i}\right)  \cdot\varphi\left(  H_{\lambda_{i}%
-1}\left(  V_{i-1}\sslash V_{i}\right)  \right)  =-H_{\lambda_{i}}\left(
V_{i-1}\sslash V_{i}\right)
\]
(by Lemma \ref{lem.Hri1}, applied to $U=V_{i-1}\sslash V_{i}$ and
$r=\lambda_{i}$), so that%
\begin{equation}
-\pi\left(  V_{i-1}\sslash V_{i}\right)  \cdot\varphi\left(  H_{\lambda_{i}%
-1}\left(  V_{i-1}\sslash V_{i}\right)  \right)  =H_{\lambda_{i}}\left(
V_{i-1}\sslash V_{i}\right)  . \label{pf.pf.thm.7.24.c2.7}%
\end{equation}
Hence, we can continue our computation of $S_{\lambda}\left(  V\right)  $ as
follows:%
\begin{align*}
S_{\lambda}\left(  V\right)   &  =\sum_{\substack{V=V_{0}>V_{1}>\cdots
>V_{n}=0\\\text{is a complete flag in }V}}\ \ \prod_{i=1}^{n}%
\underbrace{\left(  -\pi\left(  V_{i-1}\sslash V_{i}\right)  \cdot
\varphi\left(  H_{\lambda_{i}-1}\left(  V_{i-1}\sslash V_{i}\right)  \right)
\right)  }_{\substack{=H_{\lambda_{i}}\left(  V_{i-1}\sslash V_{i}\right)
\\\text{(by (\ref{pf.pf.thm.7.24.c2.7}))}}}\\
&  =\sum_{\substack{V=V_{0}>V_{1}>\cdots>V_{n}=0\\\text{is a complete flag in
}V}}\ \ \prod_{i=1}^{n}H_{\lambda_{i}}\left(  V_{i-1}\sslash V_{i}\right)  .
\end{align*}
In other words, Theorem \ref{thm.7.24} holds for our $n$ and $\lambda$. This
completes the induction step in Case 2.

Hence, the induction step is completed in both cases. Thus, the inductive
proof of Theorem \ref{thm.7.24} is complete.
\end{proof}

\appendix

\section{Appendix: Folklore proofs}

\subsection{\label{sec.apx-coprod}Appendix: Details for the proof of Lemma
\ref{lem.Stilde.coproduct}}

In our above proof of Lemma \ref{lem.Stilde.coproduct}, we have said that
(\ref{eq.Stilde.coproduct}) can be derived from
(\ref{pf.lem.Stilde.coproduct.HHE}) using the Cauchy--Binet theorem. Let us
explain in detail how this derivation proceeds. We will need a tailored
variant of the Cauchy--Binet theorem, which we shall first derive from a more
classical version.

For the rest of this section, we fix a commutative ring $R$.

We shall use matrices whose rows and columns can be indexed by arbitrary
objects, not just numbers. An $I\times J$\emph{-matrix} (where $I$ and $J$ are
two sets) is a matrix whose rows are indexed by the elements of $I$ and whose
columns are indexed by the elements of $J$. The space of all $I\times
J$-matrices over $R$ will be called $R^{I\times J}$. If $A=\left(
a_{i,j}\right)  _{i\in I,\ j\in J}$ is any $I\times J$-matrix, and if $\left(
i_{1},i_{2},\ldots,i_{k}\right)  \in I^{k}$ and $\left(  j_{1},j_{2}%
,\ldots,j_{\ell}\right)  \in J^{\ell}$ are any finite lists of elements of $I$
and $J$, respectively (for some $k,\ell\in\mathbb{N}$), then
$\operatorname*{sub}\nolimits_{i_{1},i_{2},\ldots,i_{k}}^{j_{1},j_{2}%
,\ldots,j_{\ell}}A$ shall denote the $k\times\ell$-matrix $\left(
a_{i_{x},j_{y}}\right)  _{x\in\left[  k\right]  ,\ y\in\left[  \ell\right]  }$.

One of the forms of the \emph{Cauchy--Binet theorem} (see, e.g.,
\cite[Corollary 7.182]{detnotes}) says the following:

\begin{proposition}
\label{prop.CB1}Let $A\in R^{n\times p}$ and $B\in R^{p\times m}$ be two
matrices over $R$. Let $u\in\mathbb{N}$. Let $\left(  i_{1},i_{2},\ldots
,i_{u}\right)  \in\left[  n\right]  ^{u}$ and $\left(  j_{1},j_{2}%
,\ldots,j_{u}\right)  \in\left[  m\right]  ^{u}$ be two $u$-tuples of elements
of $\left[  n\right]  $ and $\left[  m\right]  $, respectively. Then,%
\begin{align}
&  \det\left(  \operatorname*{sub}\nolimits_{i_{1},i_{2},\ldots,i_{u}}%
^{j_{1},j_{2},\ldots,j_{u}}\left(  AB\right)  \right) \nonumber\\
&  =\sum_{g_{1}<g_{2}<\cdots<g_{u}}\det\left(  \operatorname*{sub}%
\nolimits_{i_{1},i_{2},\ldots,i_{u}}^{g_{1},g_{2},\ldots,g_{u}}A\right)
\cdot\det\left(  \operatorname*{sub}\nolimits_{g_{1},g_{2},\ldots,g_{u}%
}^{j_{1},j_{2},\ldots,j_{u}}B\right)
,\ \ \ \ \ \ \ \ \ \ \label{pf.lem.Stilde.coproduct.CB1}%
\end{align}
where the sum ranges over all strictly increasing $u$-tuples $\left(
g_{1},g_{2},\ldots,g_{u}\right)  \in\left[  p\right]  ^{u}$.
\end{proposition}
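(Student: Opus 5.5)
We prove Proposition \ref{prop.CB1} by reducing it to the classical Cauchy--Binet formula for a product of a $u\times p$ matrix and a $p\times u$ matrix. Set $A' := \operatorname*{sub}\nolimits_{i_1,i_2,\ldots,i_u}^{1,2,\ldots,p} A \in R^{u\times p}$, the matrix formed by rows $i_1,i_2,\ldots,i_u$ of $A$ (repetitions allowed). Likewise set $B' := \operatorname*{sub}\nolimits_{1,2,\ldots,p}^{j_1,j_2,\ldots,j_u} B \in R^{p\times u}$, formed by columns $j_1,j_2,\ldots,j_u$ of $B$. By the definition of matrix multiplication, the $(x,y)$-entry of $A'B'$ is $\sum_k a_{i_x,k} b_{k,j_y} = (AB)_{i_x,j_y}$. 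Hence $A'B' = \operatorname*{sub}\nolimits_{i_1,\ldots,i_u}^{j_1,\ldots,j_u}(AB)$, and the left side of (\ref{pf.lem.Stilde.coproduct.CB1}) equals $\det(A'B')$.

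It therefore suffices to show the square-output version
\[
\det(A'B') = \sum_{g_1<\cdots<g_u} \det\left(\operatorname*{sub}\nolimits_{1,\ldots,u}^{g_1,\ldots,g_u} A'\right)\det\left(\operatorname*{sub}\nolimits_{g_1,\ldots,g_u}^{1,\ldots,u} B'\right).
\]
The minors on the right agree with those in (\ref{pf.lem.Stilde.coproduct.CB1}) by the same row/column bookkeeping. To prove this identity, expand $\det(A'B')$ by the Leibniz formula and multilinearity in the columns:
\[
\det(A'B') = \sum_{(k_1,\ldots,k_u)\in[p]^u}\Bigl(\prod_{y} b'_{k_y,y}\Bigr)\det\left(\operatorname*{sub}\nolimits_{1,\ldots,u}^{k_1,\ldots,k_u}A'\right).
\]

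Next we sort the terms of this sum. Tuples with a repeated entry give a matrix with two equal columns, so their contribution vanishes. Every remaining tuple is uniquely of the form $k_y = g_{\tau(y)}$, with $g_1<\cdots<g_u$ and $\tau\in S_u$. Permuting columns gives $\det\left(\operatorname*{sub}^{k}A'\right) = (-1)^\tau \det\left(\operatorname*{sub}^{g}A'\right)$. Summing over $\tau$ then gives $\sum_\tau (-1)^\tau \prod_y b'_{g_{\tau(y)},y}$, which is exactly $\det\left(\operatorname*{sub}_{g}B'\right)$ by the Leibniz formula.

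This is all routine. The only real care needed is in tracking the sign of the column permutation and in handling repeated indices $i_x$ or $j_y$, which cause no trouble since nothing requires them to be distinct. In the edge cases $u=0$ or $u>p$, both sides are $1$, respectively $0$, consistent with the empty-sum and empty-determinant conventions. Alternatively, one may simply cite \cite[Corollary 7.182]{detnotes} after the reduction in the first paragraph.
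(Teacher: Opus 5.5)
Your proof is correct. The paper itself gives no proof of this proposition. It quotes it as one of the standard forms of the Cauchy--Binet theorem, with a pointer to \cite[Corollary 7.182]{detnotes}, and then uses it only as a black box to derive Propositions \ref{prop.CB2}--\ref{prop.CB5}.

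Your argument supplies the classical proof that the citation stands in for. You first observe that the submatrix of $AB$ with rows $i_1,\ldots,i_u$ and columns $j_1,\ldots,j_u$ equals $A'B'$. Here $A'$ consists of the selected rows of $A$ and $B'$ of the selected columns of $B$; this is exactly why repeated $i_x$ or $j_y$ cause no trouble. You then prove the square case by three steps:
\begin{enumerate}
\item column-multilinearity;
\item vanishing of determinants with two equal columns;
\item regrouping the injective index tuples $(k_1,\ldots,k_u)$ as pairs $(g,\tau)$, with $g_1<\cdots<g_u$ and $\tau\in S_u$.
\end{enumerate}

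The sign bookkeeping checks out. The column-permuted minor of $A'$ is $(-1)^{\tau}$ times the sorted one. Also, $\sum_{\tau}(-1)^{\tau}\prod_{y}b'_{g_{\tau(y)},y}$ is the column form of the Leibniz formula for the minor of $B'$ on rows $g_1,\ldots,g_u$. The argument works over any commutative ring $R$, since only the alternating property of $\det$ is used.

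The citation keeps the appendix short, while your version makes it self-contained. Your closing remark (just cite \cite[Corollary 7.182]{detnotes} after the reduction) is what the paper does.
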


By relabelling the rows and columns of both matrices $A$ and $B$ here, we can
rewrite this result in the following equivalent form:

\begin{proposition}
\label{prop.CB2}Let $N$ and $M$ be two finite sets, and let $P$ be a finite
totally ordered set. Let $A\in R^{N\times P}$ and $B\in R^{P\times M}$ be two
matrices over $R$. Let $u\in\mathbb{N}$. Let $\left(  i_{1},i_{2},\ldots
,i_{u}\right)  \in N^{u}$ and $\left(  j_{1},j_{2},\ldots,j_{u}\right)  \in
M^{u}$ be two $u$-tuples of elements of $N$ and $M$, respectively. Then,%
\begin{align}
&  \det\left(  \operatorname*{sub}\nolimits_{i_{1},i_{2},\ldots,i_{u}}%
^{j_{1},j_{2},\ldots,j_{u}}\left(  AB\right)  \right) \nonumber\\
&  =\sum_{g_{1}<g_{2}<\cdots<g_{u}}\det\left(  \operatorname*{sub}%
\nolimits_{i_{1},i_{2},\ldots,i_{u}}^{g_{1},g_{2},\ldots,g_{u}}A\right)
\cdot\det\left(  \operatorname*{sub}\nolimits_{g_{1},g_{2},\ldots,g_{u}%
}^{j_{1},j_{2},\ldots,j_{u}}B\right)
,\ \ \ \ \ \ \ \ \ \ \label{pf.lem.Stilde.coproduct.CB2a}%
\end{align}
where the sum ranges over all strictly increasing $u$-tuples $\left(
g_{1},g_{2},\ldots,g_{u}\right)  \in P^{u}$.
\end{proposition}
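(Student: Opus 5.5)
The plan is a pure relabelling argument reducing Proposition \ref{prop.CB2} to Proposition \ref{prop.CB1}. Let $n:=\left\vert N\right\vert$, $p:=\left\vert P\right\vert$, $m:=\left\vert M\right\vert$. Choose arbitrary bijections $\nu:\left[  n\right]  \rightarrow N$ and $\mu:\left[  m\right]  \rightarrow M$. For $P$ we must be careful: take the \textbf{unique order-preserving} bijection $\rho:\left[  p\right]  \rightarrow P$, which exists because $P$ is a finite totally ordered set. Define the ordinary matrices
\[
A^{\prime}:=\left(  a_{\nu\left(  x\right)  ,\rho\left(  y\right)  }\right)  _{x\in\left[  n\right]  ,\ y\in\left[  p\right]  }\in R^{n\times p},\qquad B^{\prime}:=\left(  b_{\rho\left(  y\right)  ,\mu\left(  z\right)  }\right)  _{y\in\left[  p\right]  ,\ z\in\left[  m\right]  }\in R^{p\times m},
\]
where $A=\left(  a_{i,g}\right)$ and $B=\left(  b_{g,j}\right)$.

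First I check that $A^{\prime}B^{\prime}$ is the relabelled $AB$. The $\left(  x,z\right)$-entry of $A^{\prime}B^{\prime}$ is $\sum_{y\in\left[  p\right]  }a_{\nu\left(  x\right)  ,\rho\left(  y\right)  }b_{\rho\left(  y\right)  ,\mu\left(  z\right)  }$. Substituting $g=\rho\left(  y\right)$ turns this into $\sum_{g\in P}a_{\nu\left(  x\right)  ,g}b_{g,\mu\left(  z\right)  }$, which is the $\left(  \nu\left(  x\right)  ,\mu\left(  z\right)  \right)$-entry of $AB$. Only bijectivity of $\rho$ is needed here.

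Next I set $i_{k}^{\prime}:=\nu^{-1}\left(  i_{k}\right)$ and $j_{k}^{\prime}:=\mu^{-1}\left(  j_{k}\right)$. Directly from the definition of $\operatorname*{sub}$ we get
\[
\operatorname*{sub}\nolimits_{i_{1}^{\prime},\ldots,i_{u}^{\prime}}^{j_{1}^{\prime},\ldots,j_{u}^{\prime}}\left(  A^{\prime}B^{\prime}\right)  =\operatorname*{sub}\nolimits_{i_{1},\ldots,i_{u}}^{j_{1},\ldots,j_{u}}\left(  AB\right).
\]
For every $u$-tuple $\left(  y_{1},\ldots,y_{u}\right)  \in\left[  p\right]  ^{u}$ we likewise get
\[
\operatorname*{sub}\nolimits_{i_{1}^{\prime},\ldots}^{y_{1},\ldots}A^{\prime}=\operatorname*{sub}\nolimits_{i_{1},\ldots}^{\rho\left(  y_{1}\right)  ,\ldots}A,\qquad \operatorname*{sub}\nolimits_{y_{1},\ldots}^{j_{1}^{\prime},\ldots}B^{\prime}=\operatorname*{sub}\nolimits_{\rho\left(  y_{1}\right)  ,\ldots}^{j_{1},\ldots}B.
\]
These are literal equalities of $u\times u$ matrices, so the corresponding determinants agree.

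Now apply Proposition \ref{prop.CB1} to $A^{\prime}$, $B^{\prime}$ and the tuples $\left(  i_{k}^{\prime}\right)$, $\left(  j_{k}^{\prime}\right)$. This expresses $\det\operatorname*{sub}\left(  AB\right)$ as a sum over strictly increasing $\left(  y_{1}<\cdots<y_{u}\right)$ in $\left[  p\right]$. Reindex that sum via $g_{k}=\rho\left(  y_{k}\right)$. Since $\rho$ is an order isomorphism, this map is a bijection from strictly increasing $u$-tuples in $\left[  p\right]$ to strictly increasing $u$-tuples in $P$. After the substitution, the summands become exactly those in (\ref{pf.lem.Stilde.coproduct.CB2a}).

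The only point needing any care is this last reindexing. It is why $\rho$ must be chosen order-preserving: an arbitrary bijection would not carry increasing tuples to increasing tuples. With that choice, everything else is bookkeeping.
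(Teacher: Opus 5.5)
Your proof is correct and follows the paper's approach: the paper likewise renames the elements of $N$ and $M$ arbitrarily and those of $P$ in increasing order, so that (\ref{pf.lem.Stilde.coproduct.CB2a}) becomes Proposition \ref{prop.CB1}. You have written out the bookkeeping that the paper leaves implicit, and you correctly note that only the relabelling of $P$ must be order-preserving.
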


\begin{proof}
Just rename the elements of $N$ as $1,2,\ldots,n$, rename the elements of $M$
as $1,2,\ldots,m$, and rename the elements of $P$ as $1,2,\ldots,p$ in
increasing order. Then, the claimed equality
(\ref{pf.lem.Stilde.coproduct.CB2a}) becomes precisely
(\ref{pf.lem.Stilde.coproduct.CB1}).
\end{proof}

Reversing the order of the totally ordered set $P$, we can replace the
condition $g_{1}<g_{2}<\cdots<g_{u}$ under the summation sign in
(\ref{pf.lem.Stilde.coproduct.CB2a}) by the opposite condition $g_{1}%
>g_{2}>\cdots>g_{u}$ (that is, we can sum over the strictly decreasing
$u$-tuples instead of the strictly increasing ones). Thus, we transform
Proposition \ref{prop.CB2} into the following proposition:

\begin{proposition}
\label{prop.CB3}Let $N$ and $M$ be two finite sets, and let $P$ be a finite
totally ordered set. Let $A\in R^{N\times P}$ and $B\in R^{P\times M}$ be two
matrices over $R$. Let $u\in\mathbb{N}$. Let $\left(  i_{1},i_{2},\ldots
,i_{u}\right)  \in N^{u}$ and $\left(  j_{1},j_{2},\ldots,j_{u}\right)  \in
M^{u}$ be two $u$-tuples of elements of $N$ and $M$, respectively. Then,%
\begin{align}
&  \det\left(  \operatorname*{sub}\nolimits_{i_{1},i_{2},\ldots,i_{u}}%
^{j_{1},j_{2},\ldots,j_{u}}\left(  AB\right)  \right) \nonumber\\
&  =\sum_{g_{1}>g_{2}>\cdots>g_{u}}\det\left(  \operatorname*{sub}%
\nolimits_{i_{1},i_{2},\ldots,i_{u}}^{g_{1},g_{2},\ldots,g_{u}}A\right)
\cdot\det\left(  \operatorname*{sub}\nolimits_{g_{1},g_{2},\ldots,g_{u}%
}^{j_{1},j_{2},\ldots,j_{u}}B\right)
,\ \ \ \ \ \ \ \ \ \ \label{pf.lem.Stilde.coproduct.CB3}%
\end{align}
where the sum ranges over all strictly decreasing $u$-tuples $\left(
g_{1},g_{2},\ldots,g_{u}\right)  \in P^{u}$.
\end{proposition}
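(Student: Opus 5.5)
We derive Proposition \ref{prop.CB3} directly from Proposition \ref{prop.CB2} by reversing the order on $P$. Let $P^{\mathrm{op}}$ denote the set $P$ equipped with the opposite total order, so that $g<_{\mathrm{op}}g^{\prime}$ holds if and only if $g>g^{\prime}$. The matrices $A\in R^{N\times P}$ and $B\in R^{P\times M}$ do not depend on any ordering of $P$; they are simply families indexed by elements of $P$. Hence we may regard them as elements of $R^{N\times P^{\mathrm{op}}}$ and $R^{P^{\mathrm{op}}\times M}$ without change.

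The product $AB$ is defined by $\left(AB\right)_{i,j}=\sum_{g\in P}a_{i,g}b_{g,j}$. This sum is over the underlying set of $P$ and likewise ignores the order. Likewise, each submatrix $\operatorname*{sub}\nolimits_{i_{1},\ldots,i_{u}}^{g_{1},\ldots,g_{u}}A$ depends only on the tuple $\left(g_{1},\ldots,g_{u}\right)$ and not on any ordering. Therefore, applying Proposition \ref{prop.CB2} to the finite totally ordered set $P^{\mathrm{op}}$ in place of $P$ yields \eqref{pf.lem.Stilde.coproduct.CB2a} with the summation ranging over all $u$-tuples that are strictly increasing with respect to $<_{\mathrm{op}}$. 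By definition of the opposite order, these are exactly the $u$-tuples $\left(g_{1},\ldots,g_{u}\right)\in P^{u}$ satisfying $g_{1}>g_{2}>\cdots>g_{u}$ in the original order. The resulting equality is precisely \eqref{pf.lem.Stilde.coproduct.CB3}.

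No step presents a genuine obstacle. The only point requiring care is the observation that matrix multiplication and the formation of submatrices are independent of the chosen total order on $P$. The order enters Proposition \ref{prop.CB2} solely through the indexing of the summation, and that is exactly what the reversal changes.
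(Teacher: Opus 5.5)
Your proposal is correct and follows the paper's own argument: the paper likewise obtains Proposition~\ref{prop.CB3} from Proposition~\ref{prop.CB2} by reversing the total order on $P$, so that strictly increasing tuples become strictly decreasing ones. Your remark that the matrix product and the submatrices do not depend on the order of $P$ states explicitly the justification the paper leaves implicit.
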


Now, let us restrict this result to upper-triangular matrices.

First we recall how they are defined: If $P$ is a totally ordered set, then a
$P\times P$-matrix $C=\left(  c_{i,j}\right)  _{i\in P,\ j\in P}\in R^{P\times
P}$ is said to be \emph{upper-triangular} if its entries satisfy $c_{i,j}=0$
whenever $i>j$.

Now, if the matrices $A$ and $B$ in Proposition \ref{prop.CB3} are
upper-triangular, then the sum on the right hand side of
(\ref{pf.lem.Stilde.coproduct.CB3}) can be made significantly shorter by
removing many vanishing addends:

\begin{proposition}
\label{prop.CB4}Let $P$ be a finite totally ordered set. Let $A\in R^{P\times
P}$ and $B\in R^{P\times P}$ be two upper-triangular matrices over $R$. Let
$u\in\mathbb{N}$. Let $\left(  i_{1},i_{2},\ldots,i_{u}\right)  \in P^{u}$ and
$\left(  j_{1},j_{2},\ldots,j_{u}\right)  \in P^{u}$ be two $u$-tuples of
elements of $P$. Assume that $i_{1}>i_{2}>\cdots>i_{u}$ and $j_{1}%
>j_{2}>\cdots>j_{u}$. Then,%
\begin{align}
&  \det\left(  \operatorname*{sub}\nolimits_{i_{1},i_{2},\ldots,i_{u}}%
^{j_{1},j_{2},\ldots,j_{u}}\left(  AB\right)  \right) \nonumber\\
&  =\sum_{\substack{g_{1}>g_{2}>\cdots>g_{u};\\i_{k}\leq g_{k}\leq j_{k}\text{
for all }k\in\left[  u\right]  }}\det\left(  \operatorname*{sub}%
\nolimits_{i_{1},i_{2},\ldots,i_{u}}^{g_{1},g_{2},\ldots,g_{u}}A\right)
\cdot\det\left(  \operatorname*{sub}\nolimits_{g_{1},g_{2},\ldots,g_{u}%
}^{j_{1},j_{2},\ldots,j_{u}}B\right)
.\ \ \ \ \ \ \ \ \ \ \label{pf.lem.Stilde.coproduct.CB4}%
\end{align}

\end{proposition}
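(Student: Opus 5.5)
We derive this from Proposition \ref{prop.CB3}, applied with $N=M=P$, by showing that every addend on the right hand side of (\ref{pf.lem.Stilde.coproduct.CB3}) whose strictly decreasing tuple $\left(  g_{1},g_{2},\ldots,g_{u}\right)$ violates $i_{k}\leq g_{k}\leq j_{k}$ for some $k$ is zero. It then suffices to show that if $g_{k}<i_{k}$ for some $k$, then $\det\left(  \operatorname*{sub}\nolimits_{i_{1},\ldots,i_{u}}^{g_{1},\ldots,g_{u}}A\right)  =0$, and that if $g_{k}>j_{k}$ for some $k$, then $\det\left(  \operatorname*{sub}\nolimits_{g_{1},\ldots,g_{u}}^{j_{1},\ldots,j_{u}}B\right)  =0$.

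For the first claim, suppose $g_{k}<i_{k}$ and write $a_{x,y}$ for the entries of $A$. Take any $x\leq k$ and any $y\geq k$. Since the $i$'s and $g$'s are strictly decreasing, we get $i_{x}\geq i_{k}>g_{k}\geq g_{y}$. Upper-triangularity then gives $a_{i_{x},g_{y}}=0$. So the $u\times u$ submatrix has a zero block in rows $1,\ldots,k$ and columns $k,\ldots,u$, of size $k\times\left(  u-k+1\right)$. Since $k+\left(  u-k+1\right)  =u+1>u$, the determinant vanishes. To see this, note that each permutation $\sigma$ must send some $x\leq k$ to a column $\sigma\left(  x\right)  \geq k$: the $k$ rows $1,\ldots,k$ cannot all map into the $k-1$ columns $1,\ldots,k-1$. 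So every term of the Leibniz expansion contains a zero factor.

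The second claim is symmetric. If $g_{k}>j_{k}$, then for all $x\geq k$ and $y\leq k$ we have $g_{x}$ and $j_{y}$ with $g_{x}\leq g_{k}$ and $j_{y}\geq j_{k}$. Hmm — that inequality points the wrong way, so we pick the block differently. Take rows $x\leq k$ and columns $y\geq k$. Then $g_{x}\geq g_{k}>j_{k}\geq j_{y}$, so $b_{g_{x},j_{y}}=0$ by upper-triangularity of $B$. This is the same $k\times\left(  u-k+1\right)$ zero block as before, so the determinant vanishes by the same pigeonhole argument.

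Removing the vanishing addends from (\ref{pf.lem.Stilde.coproduct.CB3}) leaves exactly the sum in (\ref{pf.lem.Stilde.coproduct.CB4}). The main step is the zero-block determinant argument; the rest is routine bookkeeping.
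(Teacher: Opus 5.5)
Your proposal is correct and takes essentially the same approach as the paper. It applies Proposition \ref{prop.CB3} with $N=M=P$ and kills the addends violating $i_k\le g_k\le j_k$ using the same $k\times(u-k+1)$ zero block (rows $1,\ldots,k$, columns $k,\ldots,u$) in the submatrix of $A$ or of $B$. The only differences are that you prove the too-many-zeroes lemma inline by pigeonhole instead of citing it, and you should delete the abandoned first choice of block for $B$ from the write-up.
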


\begin{proof}
Write the $P\times P$-matrices $A$ and $B$ as $A=\left(  a_{i,j}\right)
_{i\in P,\ j\in P}$ and $B=\left(  b_{i,j}\right)  _{i\in P,\ j\in P}$.

Let $\left(  g_{1},g_{2},\ldots,g_{u}\right)  \in P^{u}$ be a strictly
decreasing $u$-tuple of elements of $P$. Thus, $g_{1}>g_{2}>\cdots>g_{u}$.

We will use the following classical and easy fact (see, e.g., \cite[Exercise
6.47 \textbf{(a)}]{detnotes}): If $C=\left(  c_{x,y}\right)  _{x,y\in\left[
u\right]  }\in R^{u\times u}$ is a $u\times u$-matrix, and if $X$ and $Y$ are
two subsets of $\left[  u\right]  $ satisfying $\left\vert X\right\vert
+\left\vert Y\right\vert >u$ and $\left(  c_{x,y}=0\text{ for all }x\in
X\text{ and }y\in Y\right)  $, then $\det C=0$. We will refer to this fact as
the \emph{too-many-zeroes lemma}. This lemma gives us two consequences in our
specific situation:

\begin{itemize}
\item If some $k\in\left[  u\right]  $ satisfies $i_{k}>g_{k}$, then
\begin{equation}
\det\left(  \operatorname*{sub}\nolimits_{i_{1},i_{2},\ldots,i_{u}}%
^{g_{1},g_{2},\ldots,g_{u}}A\right)  =0.
\label{pf.lem.Stilde.coproduct.CB4.pf.3}%
\end{equation}

[\textit{Proof:} Assume that some $k\in\left[  u\right]  $ satisfies
$i_{k}>g_{k}$. Consider this $k$.

We have $\operatorname*{sub}\nolimits_{i_{1},i_{2},\ldots,i_{u}}^{g_{1}%
,g_{2},\ldots,g_{u}}A=\left(  a_{i_{x},g_{y}}\right)  _{x,y\in\left[
u\right]  }$ (since $A=\left(  a_{i,j}\right)  _{i\in P,\ j\in P}$).

Now, let $x\in\left\{  1,2,\ldots,k\right\}  $ and $y\in\left\{
k,k+1,\ldots,u\right\}  $ be arbitrary. We shall show that $a_{i_{x},g_{y}}=0$.

Indeed, from $x\in\left\{  1,2,\ldots,k\right\}  $, we obtain $x\leq k$ and
thus $i_{x}\geq i_{k}$ (since $i_{1}>i_{2}>\cdots>i_{u}$). Moreover, from
$y\in\left\{  k,k+1,\ldots,u\right\}  $, we obtain $y\geq k$, hence $k\leq y$
and thus $g_{k}\geq g_{y}$ (since $g_{1}>g_{2}>\cdots>g_{u}$). Thus,
$i_{x}\geq i_{k}>g_{k}\geq g_{y}$, so that $a_{i_{x},g_{y}}=0$ (since the
matrix $A=\left(  a_{i,j}\right)  _{i\in P,\ j\in P}$ is upper-triangular).

Forget that we fixed $x$ and $y$. We thus have shown that $a_{i_{x},g_{y}}=0$
for all $x\in\left\{  1,2,\ldots,k\right\}  $ and $y\in\left\{  k,k+1,\ldots
,u\right\}  $. Hence, the too-many-zeroes lemma (applied to the matrix
$\operatorname*{sub}\nolimits_{i_{1},i_{2},\ldots,i_{u}}^{g_{1},g_{2}%
,\ldots,g_{u}}A=\left(  a_{i_{x},g_{y}}\right)  _{x,y\in\left[  u\right]  }$
instead of $C=\left(  c_{x,y}\right)  _{x,y\in\left[  u\right]  }$, and to the
sets $\left\{  1,2,\ldots,k\right\}  $ and $\left\{  k,k+1,\ldots,u\right\}  $
instead of $X$ and $Y$) yields $\det\left(  \operatorname*{sub}%
\nolimits_{i_{1},i_{2},\ldots,i_{u}}^{g_{1},g_{2},\ldots,g_{u}}A\right)  =0$
(since $\underbrace{\left\vert \left\{  1,2,\ldots,k\right\}  \right\vert
}_{=k}+\underbrace{\left\vert \left\{  k,k+1,\ldots,u\right\}  \right\vert
}_{=u-k+1}=k+\left(  u-k+1\right)  =u+1>u$). This proves
(\ref{pf.lem.Stilde.coproduct.CB4.pf.3}).]

\item If some $k\in\left[  u\right]  $ satisfies $g_{k}>j_{k}$, then
\begin{equation}
\det\left(  \operatorname*{sub}\nolimits_{g_{1},g_{2},\ldots,g_{u}}%
^{j_{1},j_{2},\ldots,j_{u}}B\right)  =0.
\label{pf.lem.Stilde.coproduct.CB4.pf.4}%
\end{equation}

[\textit{Proof:} This is analogous to the proof of
(\ref{pf.lem.Stilde.coproduct.CB4.pf.3}) (but now using $B$, $b_{i,j}$,
$g_{x}$ and $j_{y}$ instead of $A$, $a_{i,j}$, $i_{x}$ and $g_{y}$).]

\item Thus, if we don't have $\left(  i_{k}\leq g_{k}\leq j_{k}\text{ for all
}k\in\left[  u\right]  \right)  $, then%
\begin{equation}
\det\left(  \operatorname*{sub}\nolimits_{i_{1},i_{2},\ldots,i_{u}}%
^{g_{1},g_{2},\ldots,g_{u}}A\right)  \cdot\det\left(  \operatorname*{sub}%
\nolimits_{g_{1},g_{2},\ldots,g_{u}}^{j_{1},j_{2},\ldots,j_{u}}B\right)  =0.
\label{pf.lem.Stilde.coproduct.CB4.pf.5}%
\end{equation}

[\textit{Proof:} Assume that we don't have $\left(  i_{k}\leq g_{k}\leq
j_{k}\text{ for all }k\in\left[  u\right]  \right)  $. Hence, there exists
some $k\in\left[  u\right]  $ such that we don't have $i_{k}\leq g_{k}\leq
j_{k}$. Consider this $k$. Thus, we have $i_{k}>g_{k}$ or $g_{k}>j_{k}$ (since
we don't have $i_{k}\leq g_{k}\leq j_{k}$). In the former case, we have%
\[
\underbrace{\det\left(  \operatorname*{sub}\nolimits_{i_{1},i_{2},\ldots
,i_{u}}^{g_{1},g_{2},\ldots,g_{u}}A\right)  }_{\substack{=0\\\text{(by
(\ref{pf.lem.Stilde.coproduct.CB4.pf.3}), since }i_{k}>g_{k}\text{)}}%
}\cdot\det\left(  \operatorname*{sub}\nolimits_{g_{1},g_{2},\ldots,g_{u}%
}^{j_{1},j_{2},\ldots,j_{u}}B\right)  =0.
\]
In the latter case, we have%
\[
\det\left(  \operatorname*{sub}\nolimits_{i_{1},i_{2},\ldots,i_{u}}%
^{g_{1},g_{2},\ldots,g_{u}}A\right)  \cdot\underbrace{\det\left(
\operatorname*{sub}\nolimits_{g_{1},g_{2},\ldots,g_{u}}^{j_{1},j_{2}%
,\ldots,j_{u}}B\right)  }_{\substack{=0\\\text{(by
(\ref{pf.lem.Stilde.coproduct.CB4.pf.4}), since }g_{k}>j_{k}\text{)}}}=0.
\]
Thus, $\det\left(  \operatorname*{sub}\nolimits_{i_{1},i_{2},\ldots,i_{u}%
}^{g_{1},g_{2},\ldots,g_{u}}A\right)  \cdot\det\left(  \operatorname*{sub}%
\nolimits_{g_{1},g_{2},\ldots,g_{u}}^{j_{1},j_{2},\ldots,j_{u}}B\right)  =0$
is proved in both cases. This completes the proof of
(\ref{pf.lem.Stilde.coproduct.CB4.pf.5}).]
\end{itemize}

Forget that we fixed $\left(  g_{1},g_{2},\ldots,g_{u}\right)  $. We thus have
proved (\ref{pf.lem.Stilde.coproduct.CB4.pf.5}) for each strictly decreasing
$u$-tuple $\left(  g_{1},g_{2},\ldots,g_{u}\right)  \in P^{u}$ that does not
satisfy $\left(  i_{k}\leq g_{k}\leq j_{k}\text{ for all }k\in\left[
u\right]  \right)  $.

Now, (\ref{pf.lem.Stilde.coproduct.CB3}) (applied to $N=P$ and $M=P$) becomes
\begin{align*}
&  \det\left(  \operatorname*{sub}\nolimits_{i_{1},i_{2},\ldots,i_{u}}%
^{j_{1},j_{2},\ldots,j_{u}}\left(  AB\right)  \right) \\
&  =\sum_{g_{1}>g_{2}>\cdots>g_{u}}\det\left(  \operatorname*{sub}%
\nolimits_{i_{1},i_{2},\ldots,i_{u}}^{g_{1},g_{2},\ldots,g_{u}}A\right)
\cdot\det\left(  \operatorname*{sub}\nolimits_{g_{1},g_{2},\ldots,g_{u}%
}^{j_{1},j_{2},\ldots,j_{u}}B\right) \\
&  =\sum_{\substack{g_{1}>g_{2}>\cdots>g_{u};\\i_{k}\leq g_{k}\leq j_{k}\text{
for all }k\in\left[  u\right]  }}\det\left(  \operatorname*{sub}%
\nolimits_{i_{1},i_{2},\ldots,i_{u}}^{g_{1},g_{2},\ldots,g_{u}}A\right)
\cdot\det\left(  \operatorname*{sub}\nolimits_{g_{1},g_{2},\ldots,g_{u}%
}^{j_{1},j_{2},\ldots,j_{u}}B\right)
\end{align*}
(here, we have removed all addends that don't satisfy $\left(  i_{k}\leq
g_{k}\leq j_{k}\text{ for all }k\in\left[  u\right]  \right)  $ from our sum,
since (\ref{pf.lem.Stilde.coproduct.CB4.pf.5}) shows that all these addends
are $0$). This proves (\ref{pf.lem.Stilde.coproduct.CB4}).
\end{proof}

We shall now extend (\ref{pf.lem.Stilde.coproduct.CB4}) to infinite matrices.
It is not always possible to multiply two $P\times P$-matrices $A,B\in
R^{P\times P}$ when the set $P$ is infinite; for example, the product $\left(
1\right)  _{i,j\in\mathbb{Z}}\cdot\left(  1\right)  _{i,j\in\mathbb{Z}}$ makes
no sense because its entries would be the divergent infinite sums $\sum
_{k\in\mathbb{Z}}1$. However, in some cases, such products are defined. One
such case is when the matrices are upper-triangular and the set $P$ is
equipped with an interval-finite total order. We recall the definition:

If $P$ is a totally ordered set, and if $i,j\in P$ are two elements, then the
\emph{interval} $\left[  i,j\right]  _{P}$ is defined to be the set $\left\{
k\in P\ \mid\ i\leq k\leq j\right\}  $. A totally ordered set $P$ is said to
be \emph{interval-finite} if for any two elements $i,j\in P$, the interval
$\left[  i,j\right]  _{P}=\left\{  k\in P\ \mid\ i\leq k\leq j\right\}  $ is
finite. If $P$ is an interval-finite totally ordered set, and if $A=\left(
a_{i,j}\right)  _{i\in P,\ j\in P}\in R^{P\times P}$ and $B=\left(
b_{i,j}\right)  _{i\in P,\ j\in P}\in R^{P\times P}$ are two upper-triangular
$P\times P$-matrices, then the product $AB$ is well-defined, since its
$\left(  i,j\right)  $-th entry (for all $i,j\in P$) is
\[
\sum_{k\in P}\underbrace{a_{i,k}b_{k,j}}_{\substack{=0\text{ unless }%
k\in\left[  i,j\right]  _{P}\\\text{(because }a_{i,k}=0\text{ when
}k<i\text{,}\\\text{whereas }b_{k,j}=0\text{ when }k>j\text{)}}%
}=\underbrace{\sum_{k\in\left[  i,j\right]  _{P}}a_{i,k}b_{k,j}}%
_{\substack{\text{a finite sum,}\\\text{since }\left[  i,j\right]  _{P}\text{
is finite}}}.
\]
Moreover, this product $AB$ is itself upper-triangular (since $\left[
i,j\right]  _{P}=\varnothing$ unless $i\leq j$).

We can now generalize (\ref{pf.lem.Stilde.coproduct.CB4}) to infinite matrices:

\begin{proposition}
\label{prop.CB5}Let $P$ be an interval-finite totally ordered set. Let $A\in
R^{P\times P}$ and $B\in R^{P\times P}$ be two upper-triangular matrices over
$R$. Let $u\in\mathbb{N}$. Let $\left(  i_{1},i_{2},\ldots,i_{u}\right)  \in
P^{u}$ and $\left(  j_{1},j_{2},\ldots,j_{u}\right)  \in P^{u}$ be two
$u$-tuples of elements of $P$. Assume that $i_{1}>i_{2}>\cdots>i_{u}$ and
$j_{1}>j_{2}>\cdots>j_{u}$. Then,%
\begin{align}
&  \det\left(  \operatorname*{sub}\nolimits_{i_{1},i_{2},\ldots,i_{u}}%
^{j_{1},j_{2},\ldots,j_{u}}\left(  AB\right)  \right) \nonumber\\
&  =\sum_{\substack{g_{1}>g_{2}>\cdots>g_{u};\\i_{k}\leq g_{k}\leq j_{k}\text{
for all }k\in\left[  u\right]  }}\det\left(  \operatorname*{sub}%
\nolimits_{i_{1},i_{2},\ldots,i_{u}}^{g_{1},g_{2},\ldots,g_{u}}A\right)
\cdot\det\left(  \operatorname*{sub}\nolimits_{g_{1},g_{2},\ldots,g_{u}%
}^{j_{1},j_{2},\ldots,j_{u}}B\right)
.\ \ \ \ \ \ \ \ \ \ \label{pf.lem.Stilde.coproduct.CB5}%
\end{align}

\end{proposition}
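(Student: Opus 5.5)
The plan is to reduce Proposition \ref{prop.CB5} to the finite case, Proposition \ref{prop.CB4}, by restricting both matrices to a suitable finite interval of $P$. Upper-triangularity guarantees that every quantity in (\ref{pf.lem.Stilde.coproduct.CB5}) depends only on entries indexed inside such an interval.

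Concretely, let $m:=\min\left\{ i_{1},\ldots,i_{u},j_{1},\ldots,j_{u}\right\}$ and $M:=\max\left\{ i_{1},\ldots,i_{u},j_{1},\ldots,j_{u}\right\}$. Set $Q:=\left[ m,M\right] _{P}$, which is finite because $P$ is interval-finite; if $u=0$ both sides equal $1$, so we may assume $u>0$. Let $A^{\prime},B^{\prime}\in R^{Q\times Q}$ be the restrictions of $A$ and $B$ to rows and columns in $Q$. These are upper-triangular $Q\times Q$-matrices over the finite totally ordered set $Q$.

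The key claim is that $\left( AB\right) _{x,y}=\left( A^{\prime}B^{\prime}\right) _{x,y}$ for all $x,y\in Q$. Indeed, $\left( AB\right) _{x,y}=\sum_{k\in\left[ x,y\right] _{P}}a_{x,k}b_{k,y}$, as computed before the statement, and $\left[ x,y\right] _{P}\subseteq Q$ since $m\leq x$ and $y\leq M$. Likewise $\left( A^{\prime}B^{\prime}\right) _{x,y}$ is the same sum over $\left[ x,y\right] _{Q}=\left[ x,y\right] _{P}$. Since all $i_{k},j_{k}$ lie in $Q$, we get $\operatorname*{sub}\nolimits_{i_{1},\ldots,i_{u}}^{j_{1},\ldots,j_{u}}\left( AB\right) =\operatorname*{sub}\nolimits_{i_{1},\ldots,i_{u}}^{j_{1},\ldots,j_{u}}\left( A^{\prime}B^{\prime}\right)$, so the left-hand sides agree.

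On the right-hand side, any $g_{k}$ satisfying $i_{k}\leq g_{k}\leq j_{k}$ lies in $Q$. Hence the summation ranges in (\ref{pf.lem.Stilde.coproduct.CB5}) for $P$ and in (\ref{pf.lem.Stilde.coproduct.CB4}) for $Q$ are the same finite set of tuples, and this also shows that the sum in (\ref{pf.lem.Stilde.coproduct.CB5}) is finite. For each such tuple, the minors of $A,B$ coincide with those of $A^{\prime},B^{\prime}$, because all row and column indices lie in $Q$. Applying Proposition \ref{prop.CB4} to $A^{\prime},B^{\prime}$ then yields the claim.

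The only point needing care is checking that the product entries agree, i.e., that $\left[ x,y\right] _{P}\subseteq Q$; this follows immediately from the definition of $Q$ as an interval.
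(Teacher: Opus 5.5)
Your proposal is correct and follows essentially the same route as the paper: restrict $A$ and $B$ to the finite interval $\left[m,M\right]_P$ spanned by the $i_k$ and $j_k$, check that the left-hand minor, the summation range and the right-hand minors are all unchanged, and then invoke Proposition \ref{prop.CB4}. The only cosmetic difference is how you check $\left(AB\right)_{x,y}=\left(A^{\prime}B^{\prime}\right)_{x,y}$: you use $\left[x,y\right]_P\subseteq Q$, whereas the paper splits the sum into $k<m_-$, $k\in\left[m_-,m_+\right]_P$ and $k>m_+$, and kills the two outer parts by upper-triangularity.
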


\begin{proof}
If $u=0$, then the claim is trivial (since the determinant of a $0\times
0$-matrix is $1$). Thus, we WLOG assume that $u\geq1$.

Pick two elements $m_{-}\in P$ and $m_{+}\in P$ such that all the $2u$
elements $i_{1},i_{2},\ldots,i_{u},j_{1},j_{2},\ldots,j_{u}$ belong to the
interval $\left[  m_{-},m_{+}\right]  _{P}$ (this is always possible: just let
$m_{-}$ be the smallest of these $2u$ elements, and $m_{+}$ be the largest of
them). This interval $\left[  m_{-},m_{+}\right]  _{P}$ is finite (since $P$
is interval-finite). Moreover, the definition of $m_{-}$ and $m_{+}$ yields
$\left(  i_{1},i_{2},\ldots,i_{u}\right)  \in\left[  m_{-},m_{+}\right]
_{P}^{u}$ and $\left(  j_{1},j_{2},\ldots,j_{u}\right)  \in\left[  m_{-}%
,m_{+}\right]  _{P}^{u}$.

Write the $P\times P$-matrices $A$ and $B$ as $A=\left(  a_{i,j}\right)
_{i\in P,\ j\in P}$ and $B=\left(  b_{i,j}\right)  _{i\in P,\ j\in P}$.

For each upper-triangular $P\times P$-matrix $C=\left(  c_{i,j}\right)  _{i\in
P,\ j\in P}\in R^{P\times P}$, we let $\widetilde{C}$ be the $\left[
m_{-},m_{+}\right]  _{P}\times\left[  m_{-},m_{+}\right]  _{P}$-matrix
$\left(  c_{i,j}\right)  _{i,j\in\left[  m_{-},m_{+}\right]  _{P}}\in
R^{\left[  m_{-},m_{+}\right]  _{P}\times\left[  m_{-},m_{+}\right]  _{P}}$.
(This is simply the submatrix of $C$ in which only the rows and the columns
indexed by the elements of $\left[  m_{-},m_{+}\right]  _{P}$ have been kept.)
Thus, $\widetilde{A}=\left(  a_{i,j}\right)  _{i,j\in\left[  m_{-}%
,m_{+}\right]  _{P}}$ and $\widetilde{B}=\left(  b_{i,j}\right)
_{i,j\in\left[  m_{-},m_{+}\right]  _{P}}$. Clearly, these matrices
$\widetilde{A}$ and $\widetilde{B}$ are upper-triangular (since $A$ and $B$ are).

It is easy to see that%
\begin{equation}
\widetilde{AB}=\widetilde{A}\cdot\widetilde{B}.
\label{pf.lem.Stilde.coproduct.CB5.1}%
\end{equation}
\footnote{\textit{Proof:} Let $i,j\in\left[  m_{-},m_{+}\right]  _{P}$. Then,
the $\left(  i,j\right)  $-th entry of $\widetilde{AB}$ is the sum $\sum_{k\in
P}a_{i,k}b_{k,j}$, whereas the $\left(  i,j\right)  $-th entry of
$\widetilde{A}\cdot\widetilde{B}$ is the sum $\sum_{k\in\left[  m_{-}%
,m_{+}\right]  _{P}}a_{i,k}b_{k,j}$. But these two sums are equal, since
\begin{align*}
\sum_{k\in P}a_{i,k}b_{k,j}  &  =\sum_{\substack{k\in P;\\k<m_{-}%
}}\underbrace{a_{i,k}}_{\substack{=0\\\text{(since }k<m_{-}\leq
i\\\text{(because }i\in\left[  m_{-},m_{+}\right]  _{P}\text{),}\\\text{hence
}i>k\text{,}\\\text{but }A\text{ is upper-triangular)}}}b_{k,j}%
+\underbrace{\sum_{\substack{k\in P;\\m_{-}\leq k\leq m_{+}}}}_{=\sum
_{k\in\left[  m_{-},m_{+}\right]  _{P}}}a_{i,k}b_{k,j}+\sum_{\substack{k\in
P;\\k>m_{+}}}a_{i,k}\underbrace{b_{k,j}}_{\substack{=0\\\text{(since }%
k>m_{+}\geq j\\\text{(because }j\in\left[  m_{-},m_{+}\right]  _{P}%
\text{),}\\\text{but }B\text{ is upper-triangular)}}}\\
&  \ \ \ \ \ \ \ \ \ \ \ \ \ \ \ \ \ \ \ \ \left(
\begin{array}
[c]{c}%
\text{because each }k\in P\text{ satisfies}\\
\text{either }k<m_{-}\text{ or }m_{-}\leq k\leq m_{+}\text{ or }k>m_{+}%
\end{array}
\right) \\
&  =\underbrace{\sum_{\substack{k\in P;\\k<m_{-}}}0b_{k,j}}_{=0}+\sum
_{k\in\left[  m_{-},m_{+}\right]  _{P}}a_{i,k}b_{k,j}+\underbrace{\sum
_{\substack{k\in P;\\k>m_{+}}}a_{i,k}0}_{=0}=\sum_{k\in\left[  m_{-}%
,m_{+}\right]  _{P}}a_{i,k}b_{k,j}.
\end{align*}
So we have showed that the $\left(  i,j\right)  $-th entries of the matrices
$\widetilde{AB}$ and $\widetilde{A}\cdot\widetilde{B}$ are equal. Since we
have showed this for all $i,j\in\left[  m_{-},m_{+}\right]  _{P}$, we thus
conclude that $\widetilde{AB}=\widetilde{A}\cdot\widetilde{B}$.}.

But we can apply (\ref{pf.lem.Stilde.coproduct.CB4}) to $\left[  m_{-}%
,m_{+}\right]  _{P}$, $\widetilde{A}$ and $\widetilde{B}$ instead of $P$, $A$
and $B$ (since $\left[  m_{-},m_{+}\right]  _{P}$ is finite). Thus we obtain%
\begin{align}
&  \det\left(  \operatorname*{sub}\nolimits_{i_{1},i_{2},\ldots,i_{u}}%
^{j_{1},j_{2},\ldots,j_{u}}\left(  \widetilde{A}\cdot\widetilde{B}\right)
\right) \nonumber\\
&  =\sum_{\substack{g_{1}>g_{2}>\cdots>g_{u};\\i_{k}\leq g_{k}\leq j_{k}\text{
for all }k\in\left[  u\right]  }}\det\left(  \operatorname*{sub}%
\nolimits_{i_{1},i_{2},\ldots,i_{u}}^{g_{1},g_{2},\ldots,g_{u}}\widetilde{A}%
\right)  \cdot\det\left(  \operatorname*{sub}\nolimits_{g_{1},g_{2}%
,\ldots,g_{u}}^{j_{1},j_{2},\ldots,j_{u}}\widetilde{B}\right)  .
\label{pf.lem.Stilde.coproduct.CB5.3}%
\end{align}
Here, the indices $g_{1},g_{2},\ldots,g_{u}$ in the sum are supposed to belong
to $\left[  m_{-},m_{+}\right]  _{P}$, but we could just as well relax this
requirement and instead demand them to belong to $P$, because the second
condition \textquotedblleft$i_{k}\leq g_{k}\leq j_{k}$ for all $k\in\left[
u\right]  $\textquotedblright\ would force them to belong to $\left[
m_{-},m_{+}\right]  _{P}$ anyway (indeed, if $i_{k}\leq g_{k}\leq j_{k}$ for
all $k\in\left[  u\right]  $, then each $k\in\left[  u\right]  $ satisfies
$m_{-}\leq i_{k}\leq g_{k}\leq j_{k}\leq m_{+}$ and thus $g_{k}\in\left[
m_{-},m_{+}\right]  _{P}$). Thus, the sum on the right hand side of
(\ref{pf.lem.Stilde.coproduct.CB5.3}) ranges over the exact same $u$-tuples
$\left(  g_{1},g_{2},\ldots,g_{u}\right)  $ as the sum on the right hand side
of (\ref{pf.lem.Stilde.coproduct.CB5}). Moreover, each such $u$-tuple $\left(
g_{1},g_{2},\ldots,g_{u}\right)  $ satisfies
\[
\operatorname*{sub}\nolimits_{i_{1},i_{2},\ldots,i_{u}}^{g_{1},g_{2}%
,\ldots,g_{u}}\widetilde{A}=\operatorname*{sub}\nolimits_{i_{1},i_{2}%
,\ldots,i_{u}}^{g_{1},g_{2},\ldots,g_{u}}A
\]
(since $\widetilde{A}$ is a submatrix of $A$ that preserves the same indexing
as $A$: i.e., the $\left(  i,j\right)  $-th entry of $\widetilde{A}$ equals
the $\left(  i,j\right)  $-th entry of $A$ whenever the former is defined)
and
\[
\operatorname*{sub}\nolimits_{g_{1},g_{2},\ldots,g_{u}}^{j_{1},j_{2}%
,\ldots,j_{u}}\widetilde{B}=\operatorname*{sub}\nolimits_{g_{1},g_{2}%
,\ldots,g_{u}}^{j_{1},j_{2},\ldots,j_{u}}B
\]
(similarly). Hence, we can rewrite (\ref{pf.lem.Stilde.coproduct.CB5.3}) as%
\begin{align*}
&  \det\left(  \operatorname*{sub}\nolimits_{i_{1},i_{2},\ldots,i_{u}}%
^{j_{1},j_{2},\ldots,j_{u}}\left(  \widetilde{A}\cdot\widetilde{B}\right)
\right) \\
&  =\sum_{\substack{g_{1}>g_{2}>\cdots>g_{u};\\i_{k}\leq g_{k}\leq j_{k}\text{
for all }k\in\left[  u\right]  }}\det\underbrace{\left(  \operatorname*{sub}%
\nolimits_{i_{1},i_{2},\ldots,i_{u}}^{g_{1},g_{2},\ldots,g_{u}}\widetilde{A}%
\right)  }_{=\operatorname*{sub}\nolimits_{i_{1},i_{2},\ldots,i_{u}}%
^{g_{1},g_{2},\ldots,g_{u}}A}\cdot\det\underbrace{\left(  \operatorname*{sub}%
\nolimits_{g_{1},g_{2},\ldots,g_{u}}^{j_{1},j_{2},\ldots,j_{u}}\widetilde{B}%
\right)  }_{=\operatorname*{sub}\nolimits_{g_{1},g_{2},\ldots,g_{u}}%
^{j_{1},j_{2},\ldots,j_{u}}B}\\
&  =\sum_{\substack{g_{1}>g_{2}>\cdots>g_{u};\\i_{k}\leq g_{k}\leq j_{k}\text{
for all }k\in\left[  u\right]  }}\det\left(  \operatorname*{sub}%
\nolimits_{i_{1},i_{2},\ldots,i_{u}}^{g_{1},g_{2},\ldots,g_{u}}A\right)
\cdot\det\left(  \operatorname*{sub}\nolimits_{g_{1},g_{2},\ldots,g_{u}%
}^{j_{1},j_{2},\ldots,j_{u}}B\right)  .
\end{align*}

In order to prove (\ref{pf.lem.Stilde.coproduct.CB5}), it thus remains to show
that%
\begin{equation}
\operatorname*{sub}\nolimits_{i_{1},i_{2},\ldots,i_{u}}^{j_{1},j_{2}%
,\ldots,j_{u}}\left(  AB\right)  =\operatorname*{sub}\nolimits_{i_{1}%
,i_{2},\ldots,i_{u}}^{j_{1},j_{2},\ldots,j_{u}}\left(  \widetilde{A}%
\cdot\widetilde{B}\right)  . \label{pf.lem.Stilde.coproduct.CB5.7}%
\end{equation}
But this is easy: We have%
\[
\operatorname*{sub}\nolimits_{i_{1},i_{2},\ldots,i_{u}}^{j_{1},j_{2}%
,\ldots,j_{u}}\left(  \widetilde{AB}\right)  =\operatorname*{sub}%
\nolimits_{i_{1},i_{2},\ldots,i_{u}}^{j_{1},j_{2},\ldots,j_{u}}\left(
AB\right)
\]
(since $\widetilde{AB}$ is a submatrix of $AB$ that preserves the same
indexing as $AB$: i.e., the $\left(  i,j\right)  $-th entry of $\widetilde{AB}%
$ equals the $\left(  i,j\right)  $-th entry of $AB$ whenever the former is
defined) and thus%
\[
\operatorname*{sub}\nolimits_{i_{1},i_{2},\ldots,i_{u}}^{j_{1},j_{2}%
,\ldots,j_{u}}\left(  AB\right)  =\operatorname*{sub}\nolimits_{i_{1}%
,i_{2},\ldots,i_{u}}^{j_{1},j_{2},\ldots,j_{u}}\underbrace{\left(
\widetilde{AB}\right)  }_{\substack{=\widetilde{A}\cdot\widetilde{B}%
\\\text{(by (\ref{pf.lem.Stilde.coproduct.CB5.1}))}}}=\operatorname*{sub}%
\nolimits_{i_{1},i_{2},\ldots,i_{u}}^{j_{1},j_{2},\ldots,j_{u}}\left(
\widetilde{A}\cdot\widetilde{B}\right)  .
\]
This proves (\ref{pf.lem.Stilde.coproduct.CB5.7}). Thus, our proof of
(\ref{pf.lem.Stilde.coproduct.CB5}) is complete.
\end{proof}

Let us also record a simple property of determinants:

\begin{lemma}
\label{lem.detscalesign}Let $u\in\mathbb{N}$. Let $\lambda=\left(  \lambda
_{1},\lambda_{2},\ldots,\lambda_{u}\right)  $ and $\nu=\left(  \nu_{1},\nu
_{2},\ldots,\nu_{u}\right)  $ be two partitions of length $\leq u$. Let
$\left(  c_{i,j}\right)  _{i,j\in\left[  u\right]  }\in R^{u\times u}$ be any
$u\times u$-matrix. Then,%
\[
\det\left(  \left(  \left(  -1\right)  ^{\lambda_{i}-\nu_{j}-i+j}%
c_{i,j}\right)  _{i,j\in\left[  u\right]  }\right)  =\left(  -1\right)
^{\left\vert \lambda\right\vert -\left\vert \nu\right\vert }\det\left(
\left(  c_{i,j}\right)  _{i,j\in\left[  u\right]  }\right)  .
\]

\end{lemma}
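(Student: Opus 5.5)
The sign $(-1)^{\lambda_i-\nu_j-i+j}$ factors as a row sign times a column sign: $(-1)^{\lambda_i-\nu_j-i+j}=(-1)^{\lambda_i-i}\cdot(-1)^{-\nu_j+j}$, and $(-1)^{-\nu_j+j}=(-1)^{\nu_j-j}$. So the matrix $\left((-1)^{\lambda_i-\nu_j-i+j}c_{i,j}\right)_{i,j\in[u]}$ is $DCD'$, where $C=(c_{i,j})_{i,j\in[u]}$, $D$ is the diagonal matrix with entries $(-1)^{\lambda_i-i}$, and $D'$ is the diagonal matrix with entries $(-1)^{\nu_j-j}$. Equivalently, we multiply row $i$ of $C$ by $(-1)^{\lambda_i-i}$ and column $j$ by $(-1)^{\nu_j-j}$.

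By multilinearity of the determinant in rows and in columns (or by multiplicativity together with the determinant of a diagonal matrix), we get
\[
\det\left(DCD'\right)=\prod_{i=1}^{u}(-1)^{\lambda_i-i}\cdot\prod_{j=1}^{u}(-1)^{\nu_j-j}\cdot\det C .
\]

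It remains to simplify the scalar. The combined exponent is $\sum_i(\lambda_i-i)+\sum_j(\nu_j-j)=|\lambda|+|\nu|-2\sum_{i=1}^{u}i$. The even term drops out, so the scalar is $(-1)^{|\lambda|+|\nu|}=(-1)^{|\lambda|-|\nu|}$. Here we use that $\lambda$ and $\nu$ have length $\leq u$, so $\sum_{i=1}^u\lambda_i=|\lambda|$ and $\sum_{j=1}^u\nu_j=|\nu|$.

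There is no real obstacle. The only points needing care are that negative exponents of $-1$ cause no problem, since $(-1)^{-k}=(-1)^k$, and the parity bookkeeping above.
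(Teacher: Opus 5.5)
Your proposal is correct and follows essentially the same route as the paper: factor $(-1)^{\lambda_i-\nu_j-i+j}$ into a row sign $(-1)^{\lambda_i-i}$ and a column sign, scale rows and columns, and simplify the product of signs to $(-1)^{|\lambda|-|\nu|}$. The only cosmetic difference is that you write the column sign as $(-1)^{\nu_j-j}$ and cancel the even term $2\sum_{i=1}^u i$, whereas the paper keeps $(-1)^{j-\nu_j}$ so that $1+2+\cdots+u$ cancels directly.
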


\begin{proof}
For any $i,j\in\left[  u\right]  $, we have
\begin{equation}
\left(  -1\right)  ^{\lambda_{i}-\nu_{j}-i+j}=\left(  -1\right)  ^{\left(
\lambda_{i}-i\right)  +\left(  j-\nu_{j}\right)  }=\left(  -1\right)
^{\lambda_{i}-i}\left(  -1\right)  ^{j-\nu_{j}}. \label{pf.lem.detscalesign.1}%
\end{equation}

But the matrix $\left(  \left(  -1\right)  ^{\lambda_{i}-i}\left(  -1\right)
^{j-\nu_{j}}c_{i,j}\right)  _{i,j\in\left[  u\right]  }$ is obviously obtained
from the matrix $\left(  c_{i,j}\right)  _{i,j\in\left[  u\right]  }$ by the
following operations:

\begin{enumerate}
\item Scale the $i$-th row by the factor $\left(  -1\right)  ^{\lambda_{i}-i}$
for each $i\in\left[  u\right]  $.

\item Scale the $j$-th column by the factor $\left(  -1\right)  ^{j-\nu_{j}}$
for each $j\in\left[  u\right]  $.
\end{enumerate}

The effect of these operations on the determinant of the matrix is that the
determinant gets multiplied by $\left(  \prod_{i=1}^{u}\left(  -1\right)
^{\lambda_{i}-i}\right)  \left(  \prod_{j=1}^{u}\left(  -1\right)  ^{j-\nu
_{j}}\right)  $ (because when we scale a row or a column of a matrix by a
factor $\lambda$, the determinant of this matrix gets multiplied by $\lambda
$). Thus,%
\[
\det\left(  \left(  \left(  -1\right)  ^{\lambda_{i}-i}\left(  -1\right)
^{j-\nu_{j}}c_{i,j}\right)  _{i,j\in\left[  u\right]  }\right)  =\left(
\prod_{i=1}^{u}\left(  -1\right)  ^{\lambda_{i}-i}\right)  \left(  \prod
_{j=1}^{u}\left(  -1\right)  ^{j-\nu_{j}}\right)  \det\left(  \left(
c_{i,j}\right)  _{i,j\in\left[  u\right]  }\right)  .
\]
In view of%
\begin{align*}
&  \underbrace{\left(  \prod_{i=1}^{u}\left(  -1\right)  ^{\lambda_{i}%
-i}\right)  }_{\substack{=\left(  -1\right)  ^{\left(  \lambda_{1}-1\right)
+\left(  \lambda_{2}-2\right)  +\cdots+\left(  \lambda_{u}-u\right)
}\\=\left(  -1\right)  ^{\left(  \lambda_{1}+\lambda_{2}+\cdots+\lambda
_{u}\right)  -\left(  1+2+\cdots+u\right)  }}}\underbrace{\left(  \prod
_{j=1}^{u}\left(  -1\right)  ^{j-\nu_{j}}\right)  }_{\substack{=\left(
-1\right)  ^{\left(  1-\nu_{1}\right)  +\left(  2-\nu_{2}\right)
+\cdots+\left(  u-\nu_{u}\right)  }\\=\left(  -1\right)  ^{\left(
1+2+\cdots+u\right)  -\left(  \nu_{1}+\nu_{2}+\cdots+\nu_{u}\right)  }}}\\
&  =\left(  -1\right)  ^{\left(  \lambda_{1}+\lambda_{2}+\cdots+\lambda
_{u}\right)  -\left(  1+2+\cdots+u\right)  }\left(  -1\right)  ^{\left(
1+2+\cdots+u\right)  -\left(  \nu_{1}+\nu_{2}+\cdots+\nu_{u}\right)  }\\
&  =\left(  -1\right)  ^{\left(  \left(  \lambda_{1}+\lambda_{2}%
+\cdots+\lambda_{u}\right)  -\left(  1+2+\cdots+u\right)  \right)  +\left(
\left(  1+2+\cdots+u\right)  -\left(  \nu_{1}+\nu_{2}+\cdots+\nu_{u}\right)
\right)  }\\
&  =\left(  -1\right)  ^{\left(  \lambda_{1}+\lambda_{2}+\cdots+\lambda
_{u}\right)  -\left(  \nu_{1}+\nu_{2}+\cdots+\nu_{u}\right)  }=\left(
-1\right)  ^{\left\vert \lambda\right\vert -\left\vert \nu\right\vert }%
\end{align*}
(because $\lambda_{1}+\lambda_{2}+\cdots+\lambda_{u}=\left\vert \lambda
\right\vert $ and $\nu_{1}+\nu_{2}+\cdots+\nu_{u}=\left\vert \nu\right\vert
$), we can rewrite this as%
\[
\det\left(  \left(  \left(  -1\right)  ^{\lambda_{i}-i}\left(  -1\right)
^{j-\nu_{j}}c_{i,j}\right)  _{i,j\in\left[  u\right]  }\right)  =\left(
-1\right)  ^{\left\vert \lambda\right\vert -\left\vert \nu\right\vert }%
\det\left(  \left(  c_{i,j}\right)  _{i,j\in\left[  u\right]  }\right)  .
\]
Using (\ref{pf.lem.detscalesign.1}), we can furthermore rewrite this as%
\[
\det\left(  \left(  \left(  -1\right)  ^{\lambda_{i}-\nu_{j}-i+j}%
c_{i,j}\right)  _{i,j\in\left[  u\right]  }\right)  =\left(  -1\right)
^{\left\vert \lambda\right\vert -\left\vert \nu\right\vert }\det\left(
\left(  c_{i,j}\right)  _{i,j\in\left[  u\right]  }\right)  .
\]
Thus, Lemma \ref{lem.detscalesign} is proved.
\end{proof}

We can now finish the proof of Lemma \ref{lem.Stilde.coproduct}:

\begin{proof}
[Details for the proof of Lemma \ref{lem.Stilde.coproduct}.]We must derive
(\ref{eq.Stilde.coproduct}) from (\ref{pf.lem.Stilde.coproduct.HHE}).

Pick $u\in\mathbb{N}$ such that both $\ell\left(  \lambda\right)  $ and
$\ell\left(  \mu\right)  $ are $\leq u$. Thus, $\lambda=\left(  \lambda
_{1},\lambda_{2},\ldots,\lambda_{u}\right)  $ and $\mu=\left(  \mu_{1},\mu
_{2},\ldots,\mu_{u}\right)  $.

Let $R$ be the commutative ring $\mathbf{A}$. Clearly, the totally ordered set
$\mathbb{Z}$ is interval-finite. Recall that both matrices $\mathbf{H}\left(
V\right)  $ and $\mathbf{E}\left(  U\right)  $ are upper-triangular matrices
in $\mathbf{A}^{\mathbb{Z}\times\mathbb{Z}}=R^{\mathbb{Z}\times\mathbb{Z}}$.
Hence, the matrix $\varphi^{\dim\left(  V/U\right)  }\left(  \mathbf{E}\left(
U\right)  \right)  $ is upper-triangular as well (since $\varphi^{\dim\left(
V/U\right)  }$ is a ring endomorphism). Moreover, from $\mu_{1}\geq\mu_{2}%
\geq\cdots\geq\mu_{u}$ (since $\mu$ is a partition), we obtain $\mu_{1}%
-1>\mu_{2}-2>\cdots>\mu_{u}-u$. Similarly, $\lambda_{1}-1>\lambda_{2}%
-2>\cdots>\lambda_{u}-u$.

Thus, we can apply (\ref{pf.lem.Stilde.coproduct.CB5}) to $P=\mathbb{Z}$ and
$A=\mathbf{H}\left(  V\right)  $ and $B=\varphi^{\dim\left(  V/U\right)
}\left(  \mathbf{E}\left(  U\right)  \right)  $ and $i_{x}=\mu_{x}-x$ and
$j_{y}=\lambda_{y}-y$. As a result, we obtain%
\begin{align}
&  \det\left(  \operatorname*{sub}\nolimits_{\mu_{1}-1,\mu_{2}-2,\ldots
,\mu_{u}-u}^{\lambda_{1}-1,\lambda_{2}-2,\ldots,\lambda_{u}-u}\left(
\mathbf{H}\left(  V\right)  \cdot\varphi^{\dim\left(  V/U\right)  }\left(
\mathbf{E}\left(  U\right)  \right)  \right)  \right) \nonumber\\
&  =\sum_{\substack{g_{1}>g_{2}>\cdots>g_{u};\\\mu_{k}-k\leq g_{k}\leq
\lambda_{k}-k\text{ for all }k\in\left[  u\right]  }}\det\left(
\operatorname*{sub}\nolimits_{\mu_{1}-1,\mu_{2}-2,\ldots,\mu_{u}-u}%
^{g_{1},g_{2},\ldots,g_{u}}\left(  \mathbf{H}\left(  V\right)  \right)
\right) \nonumber\\
&  \ \ \ \ \ \ \ \ \ \ \cdot\det\left(  \operatorname*{sub}\nolimits_{g_{1}%
,g_{2},\ldots,g_{u}}^{\lambda_{1}-1,\lambda_{2}-2,\ldots,\lambda_{u}-u}\left(
\varphi^{\dim\left(  V/U\right)  }\left(  \mathbf{E}\left(  U\right)  \right)
\right)  \right) \nonumber\\
&  =\sum_{\substack{\nu_{1}\geq\nu_{2}\geq\cdots\geq\nu_{u};\\\mu_{k}-k\leq
\nu_{k}-k\leq\lambda_{k}-k\text{ for all }k\in\left[  u\right]  }}\det\left(
\operatorname*{sub}\nolimits_{\mu_{1}-1,\mu_{2}-2,\ldots,\mu_{u}-u}^{\nu
_{1}-1,\nu_{2}-2,\ldots,\nu_{u}-u}\left(  \mathbf{H}\left(  V\right)  \right)
\right) \nonumber\\
&  \ \ \ \ \ \ \ \ \ \ \cdot\det\left(  \operatorname*{sub}\nolimits_{\nu
_{1}-1,\nu_{2}-2,\ldots,\nu_{u}-u}^{\lambda_{1}-1,\lambda_{2}-2,\ldots
,\lambda_{u}-u}\left(  \varphi^{\dim\left(  V/U\right)  }\left(
\mathbf{E}\left(  U\right)  \right)  \right)  \right)
\label{pf.lem.Stilde.coproduct.5}%
\end{align}
(here, we have substituted $\nu_{k}-k$ for $g_{k}$ in the sum, noting that
this substitution transforms the chain of inequalities $g_{1}>g_{2}%
>\cdots>g_{u}$ into $\nu_{1}-1>\nu_{2}-2>\cdots>\nu_{u}-u$, which is
equivalent to $\nu_{1}\geq\nu_{2}\geq\cdots\geq\nu_{u}$ because $\nu_{1}%
,\nu_{2},\ldots,\nu_{u}$ are integers). Of course, the condition
\textquotedblleft$\mu_{k}-k\leq\nu_{k}-k\leq\lambda_{k}-k$ for all
$k\in\left[  u\right]  $\textquotedblright\ under the summation sign in
(\ref{pf.lem.Stilde.coproduct.5}) is equivalent to \textquotedblleft$\mu
_{k}\leq\nu_{k}\leq\lambda_{k}$ for all $k\in\left[  u\right]  $%
\textquotedblright; thus the summation sign can be rewritten as follows:%
\[
\sum_{\substack{\nu_{1}\geq\nu_{2}\geq\cdots\geq\nu_{u};\\\mu_{k}-k\leq\nu
_{k}-k\leq\lambda_{k}-k\text{ for all }k\in\left[  u\right]  }}=\sum
_{\substack{\nu_{1}\geq\nu_{2}\geq\cdots\geq\nu_{u};\\\mu_{k}\leq\nu_{k}%
\leq\lambda_{k}\text{ for all }k\in\left[  u\right]  }}=\sum
_{\substack{\left(  \nu_{1},\nu_{2},\ldots,\nu_{u}\right)  \\\text{is a
partition of length }\leq u;\\\mu_{k}\leq\nu_{k}\leq\lambda_{k}\text{ for all
}k\in\left[  u\right]  }}
\]
(because the condition \textquotedblleft$\mu_{k}\leq\nu_{k}\leq\lambda_{k}$
for all $k\in\left[  u\right]  $\textquotedblright\ forces all $\nu_{k}$ to be
nonnegative\footnote{since it entails $\nu_{k}\geq\mu_{k}\geq0$ for each
$k\in\left[  u\right]  $}, and then the condition \textquotedblleft$\nu
_{1}\geq\nu_{2}\geq\cdots\geq\nu_{u}$\textquotedblright\ is simply saying that
$\left(  \nu_{1},\nu_{2},\ldots,\nu_{u}\right)  $ is a partition of length
$\leq u$). Thus, we can rewrite (\ref{pf.lem.Stilde.coproduct.5}) as%
\begin{align*}
&  \det\left(  \operatorname*{sub}\nolimits_{\mu_{1}-1,\mu_{2}-2,\ldots
,\mu_{u}-u}^{\lambda_{1}-1,\lambda_{2}-2,\ldots,\lambda_{u}-u}\left(
\mathbf{H}\left(  V\right)  \cdot\varphi^{\dim\left(  V/U\right)  }\left(
\mathbf{E}\left(  U\right)  \right)  \right)  \right) \\
&  =\sum_{\substack{\left(  \nu_{1},\nu_{2},\ldots,\nu_{u}\right)  \\\text{is
a partition of length }\leq u;\\\mu_{k}\leq\nu_{k}\leq\lambda_{k}\text{ for
all }k\in\left[  u\right]  }}\det\left(  \operatorname*{sub}\nolimits_{\mu
_{1}-1,\mu_{2}-2,\ldots,\mu_{u}-u}^{\nu_{1}-1,\nu_{2}-2,\ldots,\nu_{u}%
-u}\left(  \mathbf{H}\left(  V\right)  \right)  \right) \\
&  \ \ \ \ \ \ \ \ \ \ \cdot\det\left(  \operatorname*{sub}\nolimits_{\nu
_{1}-1,\nu_{2}-2,\ldots,\nu_{u}-u}^{\lambda_{1}-1,\lambda_{2}-2,\ldots
,\lambda_{u}-u}\left(  \varphi^{\dim\left(  V/U\right)  }\left(
\mathbf{E}\left(  U\right)  \right)  \right)  \right)  .
\end{align*}
In view of (\ref{pf.lem.Stilde.coproduct.HHE}), we can furthermore rewrite
this as%
\begin{align}
&  \det\left(  \operatorname*{sub}\nolimits_{\mu_{1}-1,\mu_{2}-2,\ldots
,\mu_{u}-u}^{\lambda_{1}-1,\lambda_{2}-2,\ldots,\lambda_{u}-u}\left(
\mathbf{H}\left(  V\sslash U\right)  \right)  \right) \nonumber\\
&  =\underbrace{\sum_{\substack{\left(  \nu_{1},\nu_{2},\ldots,\nu_{u}\right)
\\\text{is a partition of length }\leq u;\\\mu_{k}\leq\nu_{k}\leq\lambda
_{k}\text{ for all }k\in\left[  u\right]  }}}_{\substack{=\sum_{\substack{\nu
=\left(  \nu_{1},\nu_{2},\ldots,\nu_{u}\right)  \\\text{is a partition of
length }\leq u;\\\mu\subseteq\nu\subseteq\lambda}}\\\text{(since the condition
\textquotedblleft}\mu_{k}\leq\nu_{k}\leq\lambda_{k}\text{ for all }k\in\left[
u\right]  \text{\textquotedblright}\\\text{is equivalent to \textquotedblleft%
}\mu\subseteq\nu\subseteq\lambda\text{\textquotedblright\ when }\nu=\left(
\nu_{1},\nu_{2},\ldots,\nu_{u}\right)  \text{)}}}\det\left(
\operatorname*{sub}\nolimits_{\mu_{1}-1,\mu_{2}-2,\ldots,\mu_{u}-u}^{\nu
_{1}-1,\nu_{2}-2,\ldots,\nu_{u}-u}\left(  \mathbf{H}\left(  V\right)  \right)
\right) \nonumber\\
&  \ \ \ \ \ \ \ \ \ \ \cdot\underbrace{\det\left(  \operatorname*{sub}%
\nolimits_{\nu_{1}-1,\nu_{2}-2,\ldots,\nu_{u}-u}^{\lambda_{1}-1,\lambda
_{2}-2,\ldots,\lambda_{u}-u}\left(  \varphi^{\dim\left(  V/U\right)  }\left(
\mathbf{E}\left(  U\right)  \right)  \right)  \right)  }_{\substack{=\varphi
^{\dim\left(  V/U\right)  }\left(  \det\left(  \operatorname*{sub}%
\nolimits_{\nu_{1}-1,\nu_{2}-2,\ldots,\nu_{u}-u}^{\lambda_{1}-1,\lambda
_{2}-2,\ldots,\lambda_{u}-u}\left(  \mathbf{E}\left(  U\right)  \right)
\right)  \right)  \\\text{(since }\varphi^{\dim\left(  V/U\right)  }\text{ is
a ring morphism, and thus commutes}\\\text{with taking determinants and
submatrices)}}}\nonumber\\
&  =\sum_{\substack{\nu=\left(  \nu_{1},\nu_{2},\ldots,\nu_{u}\right)
\\\text{is a partition of length }\leq u;\\\mu\subseteq\nu\subseteq\lambda
}}\det\left(  \operatorname*{sub}\nolimits_{\mu_{1}-1,\mu_{2}-2,\ldots,\mu
_{u}-u}^{\nu_{1}-1,\nu_{2}-2,\ldots,\nu_{u}-u}\left(  \mathbf{H}\left(
V\right)  \right)  \right) \nonumber\\
&  \ \ \ \ \ \ \ \ \ \ \cdot\varphi^{\dim\left(  V/U\right)  }\left(
\det\left(  \operatorname*{sub}\nolimits_{\nu_{1}-1,\nu_{2}-2,\ldots,\nu
_{u}-u}^{\lambda_{1}-1,\lambda_{2}-2,\ldots,\lambda_{u}-u}\left(
\mathbf{E}\left(  U\right)  \right)  \right)  \right)  .
\label{pf.lem.Stilde.coproduct.7}%
\end{align}

Furthermore, if $\nu=\left(  \nu_{1},\nu_{2},\ldots,\nu_{u}\right)  $ is any
partition of length $\leq u$, then%
\begin{align*}
\operatorname*{sub}\nolimits_{\mu_{1}-1,\mu_{2}-2,\ldots,\mu_{u}-u}^{\nu
_{1}-1,\nu_{2}-2,\ldots,\nu_{u}-u}\left(  \mathbf{H}\left(  V\right)  \right)
&  =\left(  \varphi^{\mu_{x}-x+1}H_{\left(  \nu_{y}-y\right)  -\left(  \mu
_{x}-x\right)  }\left(  V\right)  \right)  _{x,y\in\left[  u\right]  }\\
&  \ \ \ \ \ \ \ \ \ \ \ \ \ \ \ \ \ \ \ \ \left(  \text{since }%
\mathbf{H}\left(  V\right)  =\left(  \varphi^{i+1}H_{j-i}\left(  V\right)
\right)  _{i,j\in\mathbb{Z}}\right) \\
&  =\left(  \varphi^{\mu_{x}-x+1}H_{\nu_{y}-\mu_{x}-y+x}\left(  V\right)
\right)  _{x,y\in\left[  u\right]  }%
\end{align*}
and thus%
\begin{align}
&  \det\left(  \operatorname*{sub}\nolimits_{\mu_{1}-1,\mu_{2}-2,\ldots
,\mu_{u}-u}^{\nu_{1}-1,\nu_{2}-2,\ldots,\nu_{u}-u}\left(  \mathbf{H}\left(
V\right)  \right)  \right) \nonumber\\
&  =\det\left(  \left(  \varphi^{\mu_{x}-x+1}H_{\nu_{y}-\mu_{x}-y+x}\left(
V\right)  \right)  _{x,y\in\left[  u\right]  }\right) \nonumber\\
&  =\det\left(  \left(  \varphi^{\mu_{j}-j+1}H_{\nu_{i}-\mu_{j}-i+j}\left(
V\right)  \right)  _{j,i\in\left[  u\right]  }\right) \nonumber\\
&  =\det\left(  \left(  \varphi^{\mu_{j}-j+1}H_{\nu_{i}-\mu_{j}-i+j}\left(
V\right)  \right)  _{i,j\in\left[  u\right]  }\right) \nonumber\\
&  \ \ \ \ \ \ \ \ \ \ \ \ \ \ \ \ \ \ \ \ \left(
\begin{array}
[c]{c}%
\text{since the determinant of a matrix}\\
\text{equals the determinant of its transpose}%
\end{array}
\right) \nonumber\\
&  =S_{\nu/\mu}\left(  V\right)  \ \ \ \ \ \ \ \ \ \ \left(  \text{by
(\ref{eq.Slammu})}\right)  . \label{pf.lem.Stilde.coproduct.8a}%
\end{align}
The same argument (but with $V$ and $\nu$ replaced by $V\sslash U$ and
$\lambda$) shows that%
\begin{align}
&  \det\left(  \operatorname*{sub}\nolimits_{\mu_{1}-1,\mu_{2}-2,\ldots
,\mu_{u}-u}^{\lambda_{1}-1,\lambda_{2}-2,\ldots,\lambda_{u}-u}\left(
\mathbf{H}\left(  V\sslash U\right)  \right)  \right) \nonumber\\
&  =S_{\lambda/\mu}\left(  V\sslash U\right)  .
\label{pf.lem.Stilde.coproduct.8b}%
\end{align}

Furthermore, if $\nu=\left(  \nu_{1},\nu_{2},\ldots,\nu_{u}\right)  $ is any
partition of length $\leq u$, then%
\begin{align*}
&  \operatorname*{sub}\nolimits_{\nu_{1}-1,\nu_{2}-2,\ldots,\nu_{u}%
-u}^{\lambda_{1}-1,\lambda_{2}-2,\ldots,\lambda_{u}-u}\left(  \mathbf{E}%
\left(  U\right)  \right) \\
&  =\left(  \left(  -1\right)  ^{\left(  \lambda_{y}-y\right)  -\left(
\nu_{x}-x\right)  }\varphi^{\lambda_{y}-y}E_{\left(  \lambda_{y}-y\right)
-\left(  \nu_{x}-x\right)  }\left(  U\right)  \right)  _{x,y\in\left[
u\right]  }\\
&  \ \ \ \ \ \ \ \ \ \ \ \ \ \ \ \ \ \ \ \ \left(  \text{since }%
\mathbf{E}\left(  U\right)  =\left(  \left(  -1\right)  ^{j-i}\varphi
^{j}E_{j-i}\left(  U\right)  \right)  _{i,j\in\mathbb{Z}}\right) \\
&  =\left(  \left(  -1\right)  ^{\lambda_{y}-\nu_{x}-y+x}\varphi^{\lambda
_{y}-y}E_{\lambda_{y}-\nu_{x}-y+x}\left(  U\right)  \right)  _{x,y\in\left[
u\right]  }%
\end{align*}
and thus%
\begin{align}
&  \det\left(  \operatorname*{sub}\nolimits_{\nu_{1}-1,\nu_{2}-2,\ldots
,\nu_{u}-u}^{\lambda_{1}-1,\lambda_{2}-2,\ldots,\lambda_{u}-u}\left(
\mathbf{E}\left(  U\right)  \right)  \right) \nonumber\\
&  =\det\left(  \left(  \left(  -1\right)  ^{\lambda_{y}-\nu_{x}-y+x}%
\varphi^{\lambda_{y}-y}E_{\lambda_{y}-\nu_{x}-y+x}\left(  U\right)  \right)
_{x,y\in\left[  u\right]  }\right) \nonumber\\
&  =\det\left(  \left(  \left(  -1\right)  ^{\lambda_{i}-\nu_{j}-i+j}%
\varphi^{\lambda_{i}-i}E_{\lambda_{i}-\nu_{j}-i+j}\left(  U\right)  \right)
_{j,i\in\left[  u\right]  }\right) \nonumber\\
&  =\det\left(  \left(  \left(  -1\right)  ^{\lambda_{i}-\nu_{j}-i+j}%
\varphi^{\lambda_{i}-i}E_{\lambda_{i}-\nu_{j}-i+j}\left(  U\right)  \right)
_{i,j\in\left[  u\right]  }\right) \nonumber\\
&  \ \ \ \ \ \ \ \ \ \ \ \ \ \ \ \ \ \ \ \ \left(
\begin{array}
[c]{c}%
\text{since the determinant of a matrix}\\
\text{equals the determinant of its transpose}%
\end{array}
\right) \nonumber\\
&  =\left(  -1\right)  ^{\left\vert \lambda\right\vert -\left\vert
\nu\right\vert }\underbrace{\det\left(  \left(  \varphi^{\lambda_{i}%
-i}E_{\lambda_{i}-\nu_{j}-i+j}\left(  U\right)  \right)  _{i,j\in\left[
u\right]  }\right)  }_{\substack{=\widetilde{S}_{\lambda/\nu}\left(  U\right)
\\\text{(by (\ref{eq.Stil=}))}}}\nonumber\\
&  \ \ \ \ \ \ \ \ \ \ \ \ \ \ \ \ \ \ \ \ \left(  \text{by Lemma
\ref{lem.detscalesign}, applied to }c_{i,j}=\varphi^{\lambda_{i}-i}%
E_{\lambda_{i}-\nu_{j}-i+j}\left(  U\right)  \right) \nonumber\\
&  =\left(  -1\right)  ^{\left\vert \lambda\right\vert -\left\vert
\nu\right\vert }\widetilde{S}_{\lambda/\nu}\left(  U\right)  .
\label{pf.lem.Stilde.coproduct.8c}%
\end{align}

Using (\ref{pf.lem.Stilde.coproduct.8a}), (\ref{pf.lem.Stilde.coproduct.8b})
and (\ref{pf.lem.Stilde.coproduct.8c}), we can rewrite the equality
(\ref{pf.lem.Stilde.coproduct.7}) as%
\begin{align}
S_{\lambda/\mu}\left(  V\sslash U\right)   &  =\sum_{\substack{\nu=\left(
\nu_{1},\nu_{2},\ldots,\nu_{u}\right)  \\\text{is a partition of length }\leq
u;\\\mu\subseteq\nu\subseteq\lambda}}S_{\nu/\mu}\left(  V\right)
\cdot\underbrace{\varphi^{\dim\left(  V/U\right)  }\left(  \left(  -1\right)
^{\left\vert \lambda\right\vert -\left\vert \nu\right\vert }\widetilde{S}%
_{\lambda/\nu}\left(  U\right)  \right)  }_{\substack{=\left(  -1\right)
^{\left\vert \lambda\right\vert -\left\vert \nu\right\vert }\varphi
^{\dim\left(  V/U\right)  }\widetilde{S}_{\lambda/\nu}\left(  U\right)
\\\text{(since }\varphi^{\dim\left(  V/U\right)  }\text{ is a ring morphism)}%
}}\nonumber\\
&  =\sum_{\substack{\nu=\left(  \nu_{1},\nu_{2},\ldots,\nu_{u}\right)
\\\text{is a partition of length }\leq u;\\\mu\subseteq\nu\subseteq\lambda
}}\left(  -1\right)  ^{\left\vert \lambda\right\vert -\left\vert
\nu\right\vert }S_{\nu/\mu}\left(  V\right)  \cdot\varphi^{\dim\left(
V/U\right)  }\widetilde{S}_{\lambda/\nu}\left(  U\right) \nonumber\\
&  =\sum_{\substack{\nu\text{ is a partition;}\\\mu\subseteq\nu\subseteq
\lambda}}\left(  -1\right)  ^{\left\vert \lambda\right\vert -\left\vert
\nu\right\vert }S_{\nu/\mu}\left(  V\right)  \cdot\varphi^{\dim\left(
V/U\right)  }\widetilde{S}_{\lambda/\nu}\left(  U\right)
\label{pf.lem.Stilde.coproduct.9}%
\end{align}
(here, we have removed the condition \textquotedblleft$\nu$ has length $\leq
u$\textquotedblright\ from the summation sign, since this condition is
automatically implied by the condition \textquotedblleft$\nu\subseteq\lambda
$\textquotedblright). This is almost the desired formula
(\ref{eq.Stilde.coproduct}). The only difference is that the right hand side
of (\ref{eq.Stilde.coproduct}) contains more addends than the right hand side
of (\ref{pf.lem.Stilde.coproduct.9}), since the partition $\nu$ is not
required to satisfy $\mu\subseteq\nu\subseteq\lambda$ in
(\ref{eq.Stilde.coproduct}). However, this difference is immaterial: If a
partition $\nu$ does not satisfy $\mu\subseteq\nu\subseteq\lambda$, then

\begin{itemize}
\item it either fails to satisfy $\mu\subseteq\nu$, in which case we have
$S_{\nu/\mu}\left(  V\right)  =0$ by (\ref{eq.Slammu=0ifnotsub}) and therefore%
\[
\left(  -1\right)  ^{\left\vert \lambda\right\vert -\left\vert \nu\right\vert
}\underbrace{S_{\nu/\mu}\left(  V\right)  }_{=0}\cdot\,\varphi^{\dim\left(
V/U\right)  }\widetilde{S}_{\lambda/\nu}\left(  U\right)  =0;
\]

\item or it fails to satisfy $\nu\subseteq\lambda$, in which case we have
$\widetilde{S}_{\lambda/\nu}\left(  U\right)  =0$ by
(\ref{eq.Stilde.0ifnotsub}) and therefore%
\[
\left(  -1\right)  ^{\left\vert \lambda\right\vert -\left\vert \nu\right\vert
}S_{\nu/\mu}\left(  V\right)  \cdot\varphi^{\dim\left(  V/U\right)
}\underbrace{\widetilde{S}_{\lambda/\nu}\left(  U\right)  }_{=0}=0.
\]

\end{itemize}

\medskip In both cases, we obtain $\left(  -1\right)  ^{\left\vert
\lambda\right\vert -\left\vert \nu\right\vert }S_{\nu/\mu}\left(  V\right)
\cdot\varphi^{\dim\left(  V/U\right)  }\widetilde{S}_{\lambda/\nu}\left(
U\right)  =0$. Thus, all addends on the right hand side of
(\ref{eq.Stilde.coproduct}) that don't appear on the right hand side of
(\ref{pf.lem.Stilde.coproduct.9}) are $0$, and therefore do not affect the
sum. Consequently, the two right hand sides are equal. Thus,
(\ref{eq.Stilde.coproduct}) follows from (\ref{pf.lem.Stilde.coproduct.9}), so
that the proof of Lemma \ref{lem.Stilde.coproduct} is complete.
\end{proof}

\subsection{\label{sec.apx-orig-7.25}Appendix: Proof of Theorem \ref{thm.7.25}%
}

To prove Theorem \ref{thm.7.25}, we need two simple functoriality lemmas. The
first is a functoriality for the $S_{\lambda}$:

\begin{lemma}
\label{lem.funcS}Let $\mathbf{A}$ and $\mathbf{B}$ be two commutative
$F$-algebras. Let $\psi:\mathbf{B}\rightarrow\mathbf{A}$ be an $F$-algebra
morphism, and let $V$ be a finite-dimensional $F$-vector subspace of
$\mathbf{B}$. Assume that the restriction $\psi\mid_{V}$ is injective. Let
$\lambda$ be a partition. Then,%
\[
\psi\left(  S_{\lambda}\left(  V\right)  \right)  =S_{\lambda}\left(
\psi\left(  V\right)  \right)  .
\]

\end{lemma}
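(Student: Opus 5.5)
We split into two cases according to whether $\lambda$ has length $>\dim V$ or $\leq\dim V$.

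First we note that injectivity of $\psi\mid_{V}$ means that $\psi$ restricts to an $F$-linear isomorphism from $V$ onto $\psi\left(V\right)$. In particular, $\dim\psi\left(V\right)=\dim V=:n$. Moreover, if $\left(v_{1},v_{2},\ldots,v_{n}\right)$ is a basis of $V$, then $\left(\psi\left(v_{1}\right),\psi\left(v_{2}\right),\ldots,\psi\left(v_{n}\right)\right)$ is a basis of $\psi\left(V\right)$.

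\textbf{Case of long $\lambda$.} If $\lambda$ has length $>n$, then both sides are $0$ by (\ref{eq.Slam.SlamV0}). For the right hand side this uses $\dim\psi\left(V\right)=n$, and for the left hand side it uses $\psi\left(0\right)=0$.

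\textbf{Case of short $\lambda$.} Assume now that $\lambda$ has length $\leq n$. Let $s\in F\left[x_{1},x_{2},\ldots,x_{n}\right]$ be the universal polynomial $A_{\lambda+\delta}/A_{\delta}$. By (\ref{eq.Slam.SlamV1}), $S_{\lambda}\left(V\right)$ is the image of $s$ under the $F$-algebra morphism $\mathrm{ev}_{v}:F\left[x_{1},\ldots,x_{n}\right]\rightarrow\mathbf{B}$ that substitutes $v_{i}$ for $x_{i}$. Likewise, $S_{\lambda}\left(\psi\left(V\right)\right)$ is the image of $s$ under the morphism $\mathrm{ev}_{\psi\left(v\right)}$ that substitutes $\psi\left(v_{i}\right)$ for $x_{i}$; here we use the basis-independence (\ref{eq.Slam.GL-inv}) to justify using this particular basis. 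The composite $\psi\circ\mathrm{ev}_{v}$ is an $F$-algebra morphism sending each $x_{i}$ to $\psi\left(v_{i}\right)$. Since an $F$-algebra morphism out of a polynomial ring is determined by the images of the variables, we get $\psi\circ\mathrm{ev}_{v}=\mathrm{ev}_{\psi\left(v\right)}$. Applying both sides to $s$ gives $\psi\left(S_{\lambda}\left(V\right)\right)=S_{\lambda}\left(\psi\left(V\right)\right)$.

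There is no real obstacle here. The only point needing care is that injectivity of $\psi\mid_{V}$ is exactly what guarantees that the images of the basis vectors form a basis of $\psi\left(V\right)$; without it, $\dim\psi\left(V\right)$ could drop and the definition (\ref{eq.Slam.SlamV1}) would not apply with the same $n$.
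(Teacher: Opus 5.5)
Your proof is correct and follows the paper's argument essentially verbatim. You use the same case split on $\ell(\lambda)$ versus $n=\dim V$: in the long case both sides vanish by (\ref{eq.Slam.SlamV0}), and in the short case you note that $\psi$ commutes with evaluating the universal polynomial $A_{\lambda+\delta}/A_{\delta}$ at a basis $(v_1,\ldots,v_n)$ whose image is a basis of $\psi(V)$. Your identity $\psi\circ\mathrm{ev}_v=\mathrm{ev}_{\psi(v)}$ is simply a more formal rendering of the paper's remark that $\psi$ commutes with polynomials.
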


\begin{proof}
Pick a basis $\left(  v_{1},v_{2},\ldots,v_{n}\right)  $ of the $F$-vector
space $V$. Then, the $n$-tuple $\left(  \psi\left(  v_{1}\right)  ,\psi\left(
v_{2}\right)  ,\ldots,\psi\left(  v_{n}\right)  \right)  $ is a basis of
$\psi\left(  V\right)  $ (since the restriction $\psi\mid_{V}$ is injective).
Hence, $\dim\left(  \psi\left(  V\right)  \right)  =n=\dim V$.

We are in one of the following two cases:

\textit{Case 1:} We have $\ell\left(  \lambda\right)  >n$.

\textit{Case 2:} We have $\ell\left(  \lambda\right)  \leq n$.

Consider Case 1 first. In this case, $\ell\left(  \lambda\right)  >n$. Hence,
$\lambda$ has length $\ell\left(  \lambda\right)  >n=\dim V$. Thus,
(\ref{eq.Slam.SlamV0}) shows that $S_{\lambda}\left(  V\right)  =0$. But
$\lambda$ also has length $\ell\left(  \lambda\right)  >n=\dim\left(
\psi\left(  V\right)  \right)  $, and therefore (\ref{eq.Slam.SlamV0}) shows
that $S_{\lambda}\left(  \psi\left(  V\right)  \right)  =0$. Hence, Lemma
\ref{lem.funcS} holds in Case 1 (since both $S_{\lambda}\left(  V\right)  $
and $S_{\lambda}\left(  \psi\left(  V\right)  \right)  $ are $0$).

Let us now consider Case 2. In this case, $\ell\left(  \lambda\right)  \leq
n$. Now, (\ref{eq.Slam.SlamV1}) yields $S_{\lambda}\left(  V\right)
=S_{\lambda}\left(  v_{1},v_{2},\ldots,v_{n}\right)  $, and similarly%
\[
S_{\lambda}\left(  \psi\left(  V\right)  \right)  =S_{\lambda}\left(
\psi\left(  v_{1}\right)  ,\psi\left(  v_{2}\right)  ,\ldots,\psi\left(
v_{n}\right)  \right)
\]
(since $\left(  \psi\left(  v_{1}\right)  ,\psi\left(  v_{2}\right)
,\ldots,\psi\left(  v_{n}\right)  \right)  $ is a basis of $\psi\left(
V\right)  $).

However, $S_{\lambda}\left(  v_{1},v_{2},\ldots,v_{n}\right)  $ is defined by
substituting $v_{1},v_{2},\ldots,v_{n}$ for $x_{1},x_{2},\ldots,x_{n}$ into a
certain polynomial $A_{\lambda+\delta}/A_{\delta}\in F\left[  x_{1}%
,x_{2},\ldots,x_{n}\right]  $; likewise, \newline$S_{\lambda}\left(
\psi\left(  v_{1}\right)  ,\psi\left(  v_{2}\right)  ,\ldots,\psi\left(
v_{n}\right)  \right)  $ is defined by substituting $\psi\left(  v_{1}\right)
,\psi\left(  v_{2}\right)  ,\ldots,\psi\left(  v_{n}\right)  $ into the same
polynomial. Hence,
\[
S_{\lambda}\left(  \psi\left(  v_{1}\right)  ,\psi\left(  v_{2}\right)
,\ldots,\psi\left(  v_{n}\right)  \right)  =\psi\left(  S_{\lambda}\left(
v_{1},v_{2},\ldots,v_{n}\right)  \right)
\]
(since $\psi$ is an $F$-algebra morphism and thus commutes with polynomials).
In other words, $S_{\lambda}\left(  \psi\left(  V\right)  \right)
=\psi\left(  S_{\lambda}\left(  V\right)  \right)  $ (since $S_{\lambda
}\left(  \psi\left(  V\right)  \right)  =S_{\lambda}\left(  \psi\left(
v_{1}\right)  ,\psi\left(  v_{2}\right)  ,\ldots,\psi\left(  v_{n}\right)
\right)  $ and $S_{\lambda}\left(  V\right)  =S_{\lambda}\left(  v_{1}%
,v_{2},\ldots,v_{n}\right)  $). Thus, Lemma \ref{lem.funcS} is proved in Case 2.

We have now proved Lemma \ref{lem.funcS} in both Cases 1 and 2. Hence, Lemma
\ref{lem.funcS} always holds.
\end{proof}

Our next functoriality lemma says that internal quotients are functorial with
respect to $F$-algebra morphisms that are injective on the relevant subspaces:

\begin{lemma}
\label{lem.func//}Let $\mathbf{A}$ and $\mathbf{B}$ be two commutative
$F$-algebras that are integral domains. Let $\psi:\mathbf{B}\rightarrow
\mathbf{A}$ be an $F$-algebra morphism, and let $U\subseteq W$ be two
finite-dimensional $F$-vector subspaces of $\mathbf{B}$. Assume that the
restriction $\psi\mid_{W}$ is injective. Then, $\psi\mid_{W\sslash U}$ is an
$F$-vector space isomorphism from $W\sslash U$ to $\psi\left(  W\right)
\sslash\psi\left(  U\right)  $.
\end{lemma}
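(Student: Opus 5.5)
The plan is to transport the polynomial $f_U$ along $\psi$ and then use dimension counting. First I will show that $\psi\left(f_U\left(t\right)\right)=f_{\psi\left(U\right)}\left(t\right)$ in $\mathbf{A}\left[t\right]$, where $\psi$ acts on coefficients. Indeed, $f_U\left(t\right)=\prod_{u\in U}\left(t+u\right)$, and applying the ring morphism $\psi$ coefficientwise gives $\prod_{u\in U}\left(t+\psi\left(u\right)\right)$. The map $u\mapsto\psi\left(u\right)$ is a bijection $U\rightarrow\psi\left(U\right)$, since $\psi\mid_{W}$ is injective and $U\subseteq W$. Hence this product equals $\prod_{u'\in\psi\left(U\right)}\left(t+u'\right)=f_{\psi\left(U\right)}\left(t\right)$.

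Evaluating at $t=w$ for $w\in W$, and using that $\psi$ commutes with polynomial evaluation, gives
\[
\psi\left(\widetilde{f}_U\left(w\right)\right)=\widetilde{f}_{\psi\left(U\right)}\left(\psi\left(w\right)\right).
\]
As $w$ ranges over $W$, the left side ranges over $\psi\left(W\sslash U\right)$ and the right side over $\psi\left(W\right)\sslash\psi\left(U\right)$. Thus $\psi$ maps $W\sslash U$ onto $\psi\left(W\right)\sslash\psi\left(U\right)$, and $\psi\mid_{W\sslash U}$ is $F$-linear because $\psi$ is an $F$-algebra morphism.

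The step that needs care is injectivity of $\psi$ on $W\sslash U$, since $W\sslash U$ need not lie inside $W$. I will get it by counting dimensions. Both $\mathbf{A}$ and $\mathbf{B}$ are integral domains, so by the discussion of internal quotients, $\dim\left(W\sslash U\right)=\dim W-\dim U$ and $\dim\left(\psi\left(W\right)\sslash\psi\left(U\right)\right)=\dim\psi\left(W\right)-\dim\psi\left(U\right)$. The latter also equals $\dim W-\dim U$, because $\psi$ is injective on $W\supseteq U$. A surjective $F$-linear map between finite-dimensional spaces of equal dimension is an isomorphism, which finishes the proof.
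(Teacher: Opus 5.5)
Your proof is correct. The surjectivity half matches the paper's: both transport $f_U$ along $\psi$ using the bijection $U\to\psi(U)$. You handle injectivity differently, though.

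The paper argues directly on the kernel. If $\psi\left(\widetilde{f}_U(w)\right)=\prod_{u\in U}\left(\psi(w)+\psi(u)\right)=0$, then some factor vanishes because $\mathbf{A}$ is a domain. So $\psi(w)=\psi(-u)$, injectivity of $\psi\mid_W$ gives $w=-u\in U$, and hence $\widetilde{f}_U(w)=0$. You instead invoke the general fact $\dim\left(V\sslash U\right)=\dim V-\dim U$ in an integral domain, once in $\mathbf{B}$ and once in $\mathbf{A}$, and conclude by counting dimensions.

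Your version is shorter and reuses a fact the paper already established. The cost is that it relies on that fact in both rings, whereas the paper's kernel argument never uses that $\mathbf{B}$ is an integral domain. Your count can be adjusted to drop that hypothesis too. In any $\mathbf{B}$ you have $\dim\left(W\sslash U\right)\le\dim W-\dim U$, since $U\subseteq\operatorname*{Ker}\widetilde{f}_U$. Surjectivity onto the $\left(\dim W-\dim U\right)$-dimensional space $\psi(W)\sslash\psi(U)$ then supplies the reverse inequality.
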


\begin{proof}
We assumed that the restriction $\psi\mid_{W}$ is injective. Hence, the
restriction $\psi\mid_{U}$ is injective as well (since $U\subseteq W$). Thus,
the map $U\rightarrow\psi\left(  U\right)  ,\ u\mapsto\psi\left(  u\right)  $
is a bijection.

By definition of internal quotients, we have
\begin{align}
W\sslash U  &  =\widetilde{f}_{U}\left(  W\right)  =\left\{  \widetilde{f}%
_{U}\left(  w\right)  \ \mid\ w\in W\right\} \nonumber\\
&  =\left\{  \prod_{u\in U}\left(  w+u\right)  \ \mid\ w\in W\right\}
\label{pf.lem.func//.1}%
\end{align}
(since each $w\in W$ satisfies $\widetilde{f}_{U}\left(  w\right)
=f_{U}\left(  w\right)  =\prod_{u\in U}\left(  w+u\right)  $ by the definition
of $f_{U}$) and similarly%
\begin{equation}
\psi\left(  W\right)  \sslash \psi\left(  U\right)  =\left\{  \prod_{u\in
\psi\left(  U\right)  }\left(  w+u\right)  \ \mid\ w\in\psi\left(  W\right)
\right\}  . \label{pf.lem.func//.2}%
\end{equation}
Applying the map $\psi$ to both sides of (\ref{pf.lem.func//.1}), we find%
\begin{align}
\psi\left(  W\sslash U\right)   &  =\psi\left(  \left\{  \prod_{u\in U}\left(
w+u\right)  \ \mid\ w\in W\right\}  \right) \nonumber\\
&  =\left\{  \psi\left(  \prod_{u\in U}\left(  w+u\right)  \right)
\ \mid\ w\in W\right\}  . \label{pf.lem.func//.3}%
\end{align}

However, each $w\in W$ satisfies
\begin{align*}
&  \psi\left(  \prod_{u\in U}\left(  w+u\right)  \right) \\
&  =\prod_{u\in U}\left(  \psi\left(  w\right)  +\psi\left(  u\right)
\right)  \ \ \ \ \ \ \ \ \ \ \left(  \text{since }\psi\text{ is an
}F\text{-algebra morphism}\right) \\
&  =\prod_{u\in\psi\left(  U\right)  }\left(  \psi\left(  w\right)  +u\right)
\end{align*}
(here, we have substituted $u$ for $\psi\left(  u\right)  $ in the product,
since the map $U\rightarrow\psi\left(  U\right)  ,\ u\mapsto\psi\left(
u\right)  $ is a bijection). Thus, we can rewrite (\ref{pf.lem.func//.3}) as%
\begin{align*}
\psi\left(  W\sslash U\right)   &  =\left\{  \prod_{u\in\psi\left(  U\right)
}\left(  \psi\left(  w\right)  +u\right)  \ \mid\ w\in W\right\} \\
&  =\left\{  \prod_{u\in\psi\left(  U\right)  }\left(  w+u\right)
\ \mid\ w\in\psi\left(  W\right)  \right\}
\end{align*}
(here, we have substituted $w$ for $\psi\left(  w\right)  $ in the set, since
$\psi\left(  W\right)  $ is the set of all $\psi\left(  w\right)  $ with $w\in
W$). Comparing this with (\ref{pf.lem.func//.2}), we obtain%
\[
\psi\left(  W\sslash U\right)  =\psi\left(  W\right)  \sslash \psi\left(
U\right)  .
\]
Thus, the restriction $\psi\mid_{W\sslash U}$ is a well-defined and surjective
map from $W\sslash U$ to $\psi\left(  W\right)  \sslash \psi\left(  U\right)
$. Of course, this restriction is furthermore $F$-linear (since $\psi$ is
$F$-linear). It remains to prove that it is injective; then (combined with
$F$-linearity and surjectivity) it will automatically follow that $\psi
\mid_{W\sslash U}$ is an isomorphism.

So let us prove that $\psi\mid_{W\sslash U}$ is injective. Since $\psi
\mid_{W\sslash U}$ is $F$-linear, it suffices to show that
$\operatorname*{Ker}\left(  \psi\mid_{W\sslash U}\right)  =0$.

So let $r\in\operatorname*{Ker}\left(  \psi\mid_{W\sslash U}\right)  $. Thus,
$r\in W\sslash U$ and $\psi\left(  r\right)  =0$. Since $r\in
W\sslash U=\widetilde{f}_{U}\left(  W\right)  $, we can write $r$ as
$r=\widetilde{f}_{U}\left(  w\right)  $ for some $w\in W$. Consider this $w$.
Then,
\[
r=\widetilde{f}_{U}\left(  w\right)  =f_{U}\left(  w\right)  =\prod_{u\in
U}\left(  w+u\right)
\]
(by the definition of $f_{U}$), and thus%
\[
\psi\left(  r\right)  =\psi\left(  \prod_{u\in U}\left(  w+u\right)  \right)
=\prod_{u\in U}\left(  \psi\left(  w\right)  +\psi\left(  u\right)  \right)
\]
(since $\psi$ is an $F$-algebra morphism), so that $\prod_{u\in U}\left(
\psi\left(  w\right)  +\psi\left(  u\right)  \right)  =\psi\left(  r\right)
=0$. Since $\mathbf{A}$ is an integral domain, this entails that $\psi\left(
w\right)  +\psi\left(  u\right)  =0$ for some $u\in U$ (because a product in
an integral domain can only be $0$ if one of its factors is $0$). Consider
this $u$. Thus, $\psi\left(  w\right)  +\psi\left(  u\right)  =0$, so that
$\psi\left(  w\right)  =-\psi\left(  u\right)  =\psi\left(  -u\right)  $
(since $\psi$ is an $F$-algebra morphism). Since $\psi\mid_{W}$ is injective
(and since $w\in W$ and $-\underbrace{u}_{\in U}\in-U\subseteq U\subseteq W$),
we thus conclude that $w=-\underbrace{u}_{\in U}\in-U\subseteq U\subseteq
\operatorname*{Ker}\widetilde{f}_{U}$ (since we know that $\widetilde{f}_{U}$
always contains $U$ in its kernel). Thus, $\widetilde{f}_{U}\left(  w\right)
=0$, so that $r=\widetilde{f}_{U}\left(  w\right)  =0$.

Forget that we fixed $r$. We thus have shown that $r=0$ for each
$r\in\operatorname*{Ker}\left(  \psi\mid_{W\sslash U}\right)  $. Hence,
$\operatorname*{Ker}\left(  \psi\mid_{W\sslash U}\right)  =0$, so that
$\psi\mid_{W\sslash U}$ is injective. As we explained above, this completes
the proof of Lemma \ref{lem.func//}.
\end{proof}

We can now derive Theorem \ref{thm.7.25} from Theorem \ref{thm.skew-V/L}:

\begin{proof}
[Proof of Theorem \ref{thm.7.25}.]We cannot directly apply Theorem
\ref{thm.skew-V/L}, since we have not assumed that the Frobenius morphism
$\varphi:\mathbf{A}\rightarrow\mathbf{A}$ is invertible. Instead, we shall
apply Theorem \ref{thm.skew-V/L} to a new $F$-algebra $\widehat{\mathbf{B}}$
whose Frobenius morphism is invertible, and then transport the result to
$\mathbf{A}$ via an $F$-algebra morphism.

Here are the details: Let $n=\dim V$, and pick a basis $\left(  v_{1}%
,v_{2},\ldots,v_{n}\right)  $ of the $F$-vector space $V$. Let $\mathbf{B}$ be
the polynomial ring $F\left[  x_{1},x_{2},\ldots,x_{n}\right]  $ in $n$
indeterminates. By its universal property, there is an $F$-algebra morphism
$\psi:\mathbf{B}\rightarrow\mathbf{A}$ that sends each $x_{i}$ to $v_{i}$.
Consider this $\psi$. Let $W$ be the $F$-vector subspace of $\mathbf{B}$
spanned by $x_{1},x_{2},\ldots,x_{n}$ (that is, the space of all homogeneous
polynomials of degree $1$ in $\mathbf{B}$). Then, the $F$-linear map $\psi$
sends the basis $\left(  x_{1},x_{2},\ldots,x_{n}\right)  $ of $W$ to the
basis $\left(  v_{1},v_{2},\ldots,v_{n}\right)  $ of $V$. Thus, $\psi$
restricts to a vector space isomorphism from $W$ to $V$. In particular, the
restriction $\psi\mid_{W}$ is injective, and we have $\psi\left(  W\right)
=V$. Moreover, the lines $L\subseteq V$ are exactly the images of the lines
$M\subseteq W$ under this isomorphism from $W$ to $V$, and this yields a
1-to-1 correspondence $M\mapsto\psi\left(  M\right)  $ between the lines
$M\subseteq W$ and the lines $L\subseteq V$.

However, Remark \ref{rmk.phi-bij.1} \textbf{(a)} (applied to $\mathbf{B}$
instead of $\mathbf{A}$) shows that $\mathbf{B}$ can be embedded into a larger
commutative $F$-algebra $\widehat{\mathbf{B}}$ whose Frobenius morphism
$\varphi:\widehat{\mathbf{B}}\rightarrow\widehat{\mathbf{B}}$ is invertible.
Consider this larger algebra $\widehat{\mathbf{B}}$. Moreover,
$\widehat{\mathbf{B}}$ is an integral domain (again by Remark
\ref{rmk.phi-bij.1} \textbf{(a)}). Hence, we can apply Theorem
\ref{thm.skew-V/L} to $\widehat{\mathbf{B}}$, $W$ and $\varnothing$ instead of
$\mathbf{A}$, $V$ and $\mu$. This yields
\[
S_{\lambda/\varnothing}\left(  W\right)  =\sum_{L\subseteq W\text{ line}%
}S_{\lambda/\varnothing}\left(  W\sslash L\right)  =\sum_{M\subseteq W\text{
line}}S_{\lambda/\varnothing}\left(  W\sslash M\right)  .
\]
In view of (\ref{eq.Slam0}), we can rewrite this as
\begin{equation}
S_{\lambda}\left(  W\right)  =\sum_{M\subseteq W\text{ line}}S_{\lambda
}\left(  W\sslash M\right)  . \label{pf.thm.7.25.4}%
\end{equation}
This is an equality in $\widehat{\mathbf{B}}$, thus an equality in
$\mathbf{B}$ (since both of its sides lie in the subring $\mathbf{B}$ of
$\widehat{\mathbf{B}}$).

Let us now apply the $F$-algebra morphism $\psi:\mathbf{B}\rightarrow
\mathbf{A}$ to both sides of this equality. Thus, we find%
\begin{align}
\psi\left(  S_{\lambda}\left(  W\right)  \right)   &  =\psi\left(
\sum_{M\subseteq W\text{ line}}S_{\lambda}\left(  W\sslash M\right)  \right)
\nonumber\\
&  =\sum_{M\subseteq W\text{ line}}\psi\left(  S_{\lambda}\left(
W\sslash M\right)  \right)  \label{pf.thm.7.25.5}%
\end{align}
(since $\psi$ is $F$-linear). However, Lemma \ref{lem.funcS} (applied to $W$
instead of $V$) yields $\psi\left(  S_{\lambda}\left(  W\right)  \right)
=S_{\lambda}\left(  \psi\left(  W\right)  \right)  $ (since the restriction
$\psi\mid_{W}$ is injective). In view of $\psi\left(  W\right)  =V$, we
rewrite this as $\psi\left(  S_{\lambda}\left(  W\right)  \right)
=S_{\lambda}\left(  V\right)  $. Comparing this with (\ref{pf.thm.7.25.5}), we
obtain%
\begin{equation}
S_{\lambda}\left(  V\right)  =\sum_{M\subseteq W\text{ line}}\psi\left(
S_{\lambda}\left(  W\sslash M\right)  \right)  . \label{pf.thm.7.25.6}%
\end{equation}

Now, let $M\subseteq W$ be a line. Then, Lemma \ref{lem.func//} (applied to
$U=M$) yields that $\psi\mid_{W\sslash M}$ is an $F$-vector space isomorphism
from $W\sslash M$ to $\psi\left(  W\right)  \sslash\psi\left(  M\right)  $.
Hence, in particular, the restriction $\psi\mid_{W\sslash M}$ is injective,
and we have $\psi\left(  W\sslash M\right)  =\underbrace{\psi\left(  W\right)
}_{=V}\sslash\psi\left(  M\right)  =V\sslash\psi\left(  M\right)  $.
Therefore, Lemma \ref{lem.funcS} (applied to $W\sslash M$ instead of $V$)
yields
\[
\psi\left(  S_{\lambda}\left(  W\sslash M\right)  \right)  =S_{\lambda}\left(
\underbrace{\psi\left(  W\sslash M\right)  }_{=V\sslash\psi\left(  M\right)
}\right)  =S_{\lambda}\left(  V\sslash\psi\left(  M\right)  \right)  .
\]

Forget that we fixed $M$. We thus have proved that
\[
\psi\left(  S_{\lambda}\left(  W\sslash M\right)  \right)  =S_{\lambda}\left(
V\sslash\psi\left(  M\right)  \right)  \ \ \ \ \ \ \ \ \ \ \text{for each line
}M\subseteq W.
\]
Thus, we can rewrite (\ref{pf.thm.7.25.6}) as%
\[
S_{\lambda}\left(  V\right)  =\sum_{M\subseteq W\text{ line}}S_{\lambda
}\left(  V\sslash\psi\left(  M\right)  \right)  =\sum_{L\subseteq V\text{
line}}S_{\lambda}\left(  V\sslash L\right)
\]
(here, we have substituted $L$ for $\psi\left(  M\right)  $ in the sum, since
we have a 1-to-1 correspondence $M\mapsto\psi\left(  M\right)  $ between the
lines $M\subseteq W$ and the lines $L\subseteq V$). Thus, Theorem
\ref{thm.7.25} is proved.
\end{proof}

\end{document}